\documentclass[12pt,a4paper]{amsart}
\usepackage[utf8]{inputenc}
\usepackage[T1]{fontenc}
\usepackage{lmodern}
\usepackage{amsmath,amstext,amssymb,amsopn,amsthm,color,graphicx,enumitem,mathrsfs,mathtools,dsfont}
\usepackage[margin=1in]{geometry}
\usepackage{float}
\usepackage[colorlinks,citecolor=blue,urlcolor=blue,bookmarks=true]{hyperref}
\usepackage{tikz}
\usetikzlibrary{intersections}
\usetikzlibrary{arrows.meta}
\usetikzlibrary{decorations.text}
\usetikzlibrary{decorations.pathmorphing}
\usetikzlibrary{positioning}
\usetikzlibrary{calc}

\usepackage{color}

\definecolor{kb}{rgb}{0.1,0.5,0.1}
\definecolor{tg}{rgb}{0.7,0.1,0.2}
\definecolor{lw}{rgb}{0.1,0.1,1}
\definecolor{ell}{rgb}{0.0,0.5,0}

\newcommand{\ind}{\mathbf{1}}
\newcommand{\vphi}{\varphi}

\numberwithin{equation}{section}

\newtheorem{theorem}{Theorem}[section]
\newtheorem{lemma}[theorem]{Lemma}
\newtheorem{proposition}[theorem]{Proposition}
\newtheorem{corollary}[theorem]{Corollary}

\theoremstyle{definition}

\newtheorem{example}[theorem]{Example}

\newcommand{\E}{\mathbb{E}}
\newcommand{\D}{\mathbb{D}}
\newcommand{\N}{\mathbb{N}}

\renewcommand{\P}{\mathbb{P}}
\newcommand{\R}{\mathbb{R}}
\newcommand{\Rd}{{\mathbb{R}^d}}

\newcommand{\bfX}{\mathbf{X}}
\newcommand{\bfY}{\mathbf{Y}}

\def\calD{{\mathcal{D}}}

\def\calL{{\mathcal{L}}}
\def\calR{{\mathcal{R}}}

\renewcommand{\leq}{\leqslant}
\renewcommand{\le}{\leq}
\renewcommand{\geq}{\geqslant}
\renewcommand{\ge}{\geq}

\DeclareMathOperator{\dist}{dist}

\DeclareMathOperator{\supp}{supp}
\newcommand{\norm}[1]{\|#1\|}

\newcommand{\cbhi}{C_{\mathrm{BHI}}}
\newcommand{\cbhit}{\widetilde{C}_{\mathrm{BHI}}}
\newcommand{\cnu}{C_{\text{\rm L\'evy}}}

%{C_{\mathrm{gen}}}

\newcommand{\one}{{\bf 1}}

\renewcommand{\epsilon}{\varepsilon}

\usepackage{amsaddr}

\begin{document}

\title{Yaglom limit for unimodal L\'evy processes}

\author[G. Armstrong, K. Bogdan, T. Grzywny, \L{}. Le\.{z}aj and L. Wang]{Gavin Armstrong$^1$, Krzysztof Bogdan$^2$, Tomasz Grzywny$^2$, \L{}ukasz Le\.{z}aj$^2$ and Longmin Wang$^3$}
\address{$^1$Mathematics Department, Central Washington University, 400E University Way, Ellensburg, WA 98926, USA \\ \href{gavin.armstrong@cwu.edu}{gavin.armstrong@cwu.edu}}
\address{$^2$Department of Mathematics, Wroc\l{}aw University of Science and Technology, wyb. Wyspia\'{n}skiego 27, 50-370 Wroc\l{}aw, Poland \\ \href{mailto:krzysztof.bogdan@pwr.edu.pl}{krzysztof.bogdan@pwr.edu.pl}, \href{tomasz.grzywny@pwr.edu.pl}{tomasz.grzywny@pwr.edu.pl}, \href{lukasz.lezaj@pwr.edu.pl}{lukasz.lezaj@pwr.edu.pl}}
\address{$^3$School of Statistics and Data Science, Nankai University, Tianjin 300071, P.R. China \\  \href{wanglm@nankai.edu.cn}{wanglm@nankai.edu.cn}}

%\author[G. Armstrong]{Gavin Armstrong}
%\author[K. Bogdan]{Krzysztof Bogdan}
\thanks{G.A., T.G. and \L{}.L. were partially supported by the National Science Centre (Poland): grant 2016/23/B/ST1/01665. K.B. was partially supported by the National Science Centre (Poland):
grant 2017/27/B/ST1/01339. L.W. was partially supported by the National Natural Science Foundation of China: Grant 11801283}

\subjclass[2010]{Primary: 60G51, 60J50 secondary: 60G18, 60J35.}
 %60G18 Self-similar stochastic processes
 %60J75  Jump processes
 %60J50  Boundary theory
 %31B25  Boundary behavior (in Potential theory)
 %60J35	 Transition functions, generators and resolvents
 %60G51 Processes with independent increments; L´evy processes
 %60G52 Stable stochastic processes
 %60J45 Probabilistic potential theory [See also 31Cxx, 31D05]
 %60J76 Jump processes on general state space
 \keywords{Yaglom limit, L\'evy process, Lipschitz cone, boundary limit}

\begin{abstract}
We prove universality of the Yaglom limit of Lipschitz cones among all unimodal L\'{e}vy processes sufficiently close to the isotropic $\alpha$-stable L\'{e}vy process.
\end{abstract}

\maketitle

 %
 %                            ---------- o ----------
 %

\section{Introduction}\label{sec:i}

Let $d \geq 1$ and $\bfX=(X_t\colon t \geq 0)$ be a pure-jump isotropic unimodal L\'{e}vy process in $\R^d$ with non-integrable L\'{e}vy density $\nu$ and the characteristic exponent $\psi$. Thus,
\begin{equation}\label{eq:kb05}
\E e^{i \xi \cdot X_t} = 
e^{-t\psi(\xi)}, \quad \xi \in \Rd,\ t\ge 0,
\end{equation}
where
\begin{equation}\label{eq:kb03}
	\psi(\xi) = \int_{\Rd} \big(1-\cos \xi \cdot z\big)\nu(z)\,dz, \quad \xi \in \Rd,
\end{equation}
\begin{equation}\label{eq:kb04}
	\int_{\Rd} \nu(z)\,dz=\infty \quad \text{and} \quad \int_{\Rd} \big(|z|^2 \wedge 1\big)\nu(z)\,dz< \infty,
\end{equation}
and $\nu(z)$ is a radial function, non-increasing in $|z|$. For normalization we also assume that $\psi(1)=1$. We let $\Gamma$ be an arbitrary Lipschitz cone on $\Rd$ with the vertex at the origin and we denote by $\tau_{\Gamma}$ the first exit time from the cone,
\[
\tau_{\Gamma} = \inf \{t>0 \colon X_t \notin \Gamma \}.
\]
Our main objective is to determine under which conditions there exist a function $g \colon [0,\infty) \mapsto (0,\infty)$ and a probability measure $\mu$ on $\Gamma$ such that
\begin{equation}\label{eq:yaglom}
\lim_{t \to \infty} \P_x  \left( \frac{X_t}{g(t)} \in A \big| \tau_{\Gamma}>t \right) = \mu(A), \quad A \subset \Gamma.
\end{equation}

The above limit, if it exists,  is called the Yaglom limit of $\Gamma$ for $\bfX$, after Akiva Moiseevich Yaglom, who first identified quasi-stationary distributions for the subcritical Bienaym\'{e}-Galton-Watson branching processes in \cite{Yaglom47}. Generally speaking, Yaglom limits describe the large-time limiting behaviour of processes conditioned not to become extinct or absorbed. Accordingly, the random time which defines the conditioning in \eqref{eq:yaglom} and in similar expressions is sometimes referred to as absorption time.
For example, in the one-dimensional case, the absorption time is usually the first hitting time of the origin, see, e.g., Haas and Rivero \cite{HassRivero12}, or the first hitting time of the negative half-line $(-\infty,0)$, see, e.g., A. Kyprianou and Palmowski \cite{KyprianouPalmowski06}.
Over the last years the interest in the existence of Yaglom limits and quasi-stationary distributions {was steadily growing}. {This is due to the fact that the topic is mathematically challenging and in various applications it is natural to ask about the limiting behaviour conditional on non-extinction. 
{However}, so far the existence of the Yaglom limit for L\'{e}vy processes {was studed mainly} in the one-dimensional setting {or in the finite volume setting}, 
{with the notable exception of} the result Bogdan, Palmowski and Wang \cite{BoPaWa2018} explained below, which we extend in this work.

We write $\nu(r):=\nu(x)$ if $|x|=r$ for some $x \in \Rd$ (the radial profile of the L\'evy density).
The following additional assumptions on $\bfX$ will be explicitly made when needed:
\begin{itemize}
\item[{\bf A1}] There is $\beta \in (0,2)$ and $M>1$ such that
\begin{equation}\label{eq:scaling}
 \frac{\nu(r_1)}{\nu(r_2)} \le M \left(\frac{r_1}{r_2}\right)^{\!\!-d - \beta},\quad 0 < r_1 < r_2 < \infty.
\end{equation}
\item[{\bf A2}] $\nu(r)$ is regularly varying at infinity with index $-d-\alpha$ for some $\alpha \in (0,2)$. %[Maybe $\alpha\in[0,2)$]
\item[{\bf A3}] There exists $t_0> 0$ such that $e^{-t_0\psi}\in L^1$.
\end{itemize}
If {\bf A1}, {\bf A2} and {\bf A3}  are satisfied, then we say in short that {\bf A} holds. We remark that the condition {\bf A1} is important in the probabilistic potential theory, in particular for estimating the Dirichlet heat kernel for $\bfX$, see Bogdan, Grzywny and Ryznar \cite{KBTGMR-dhk}. Assumption {\bf A2} is crucial for asymptotic analysis, in particular it puts the one-dimensional distributions of $\bfX$ in the domain of attraction of the isotropic $\alpha$-stable law. By Cygan, Grzywny and Trojan \cite[Theorem 7 and Proposition 2]{WCTGBT},  {\bf A2} is equivalent to $\psi$ being regularly varying at $0$ with index $\alpha$. The condition {\bf A3} allows to formulate our results in terms of convergence of probability densities.  Noteworthy, by Knopova and Schilling \cite[Lemma 2.6]{MR3010850} and \eqref{eq:kb05}, {\bf A3} is equivalent to the boundedness  of the transition density of $\bf{X}$ for some (large) time $t$.

For example, the geometric $\alpha$-stable L\'evy process with $\alpha\in(0,2)$ satisfies {\bf A}, since {\bf A2} and {\bf A3} follow from the formula $\psi(x)=\log(1+|\xi|^\alpha)$ and {\bf A1} follows from Grzywny and Ryznar \cite[p. 10]{MR3007664}; see Grzywny, Ryznar and Trojan \cite{TGMRBT} and 
\v{S}iki\'{c}, Song and Vondra\v{c}ek \cite{MR2240700} for further properties of this important process.

The focal point of our discussion, however, is the isotropic $\alpha$-stable process in $\Rd$ with $\alpha\in(0,2)$, which we denote by $\bfX^{\alpha}=(X_t^{\alpha}\colon t \geq 0)$ \cite{KBTGMR-dhk}, see also below. Of course, the L\'evy-Khinchine exponent of  $\bfX^{\alpha}$ is $\psi(\xi)=|\xi|^{\alpha}$ and the process satisfies {\bf A}.
For the sake of clarity and consistency, the objects pertaining to $\bfX^{\alpha}$ will be marked by a superscript~$\alpha$, which is then not an exponent. For instance, we write $\tau_{\Gamma}^{\alpha}=\tau_{\Gamma}(\bfX^{\alpha})$. Let $\mu^{\alpha}$ be the Yaglom limit of the isotropic $\alpha$-stable process for the Lipschitz cone $\Gamma$, given in \cite[Theorem 1.1]{BoPaWa2018}:
\begin{equation}\label{e.Ysl}
\lim_{t \to  \infty}\P_x \big( t^{-1/\alpha}X_t^{\alpha} \in A | \tau_{\Gamma}^{\alpha}>t \big) = \mu^{\alpha}(A), \quad A \subset \Gamma,\ x \in \Gamma.
\end{equation}
 By \cite[Theorem 3.3]{BoPaWa2018}, the probability measure $\mu^{\alpha}$ has density $n^{\alpha}$ with respect to the Lebesgue measure and the following convergence of densities holds,
	\begin{equation}\label{eq:limit_n}
	n^{\alpha}(y) = \lim_{\Gamma \ni x \to 0} \frac{p_1^{\Gamma,\alpha}(x,y)}{\P_x(\tau_{\Gamma}^{\alpha}>1)},\quad y\in \Gamma,
	\end{equation}
$p_t^{\Gamma,\alpha}$ being the Dirichlet heat kernel of the cone $\Gamma$ for $\bfX^{\alpha}$, see \eqref{eq:Hunt}. 

	Here is the main result of the paper. As usual, we let
	\[
	\psi^{-1}(u) = \sup \{ r\ge 0 \colon \psi(r)=u \}, \quad u\ge 0.
	\]
\begin{theorem}\label{thm:Yaglom}Let $X$ be a pure-jump isotropic unimodal L\'{e}vy process with characteristic exponent $\psi$.
Assume {\bf A}. Let $\Gamma$ be a Lipschitz cone with vertex at the origin. Then,
	\begin{equation}\label{e.Yl}
	\lim_{s \to \infty} \P_x \big( \psi^{-1}(1/s)X_s\in A | \tau_{\Gamma}>s \big) = \mu^{\alpha}(A),\quad x\in \Gamma,\ A\subset \Rd.
	\end{equation}
\end{theorem}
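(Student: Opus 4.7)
The plan is to pull the problem back by rescaling to a family of L\'evy processes converging to the isotropic $\alpha$-stable process, and then to identify the limit by \eqref{eq:limit_n}. Set $\lambda(s) := \psi^{-1}(1/s)$ and introduce
\[
Y^{(s)}_u := \lambda(s)\, X_{su}, \qquad u \ge 0,
\]
a pure-jump isotropic unimodal L\'evy process with characteristic exponent $\psi^{(s)}(\xi) = s\,\psi(\lambda(s)\xi)$. Since $\Gamma$ is a cone with vertex $0$, it is invariant under dilations, hence
\[
\{\tau_\Gamma(X) > s\} = \{\tau_\Gamma(Y^{(s)}) > 1\} \qquad \text{and} \qquad \lambda(s)X_s = Y^{(s)}_1.
\]
Writing $p_1^{\Gamma,(s)}$ for the Dirichlet heat kernel of $Y^{(s)}$ on $\Gamma$, a change of variables identifies the conditional density of $\lambda(s)X_s$ given $\{\tau_\Gamma > s\}$ with
\[
g_s(y) = \frac{p_1^{\Gamma,(s)}\bigl(\lambda(s)x,\,y\bigr)}{\P_{\lambda(s)x}^{Y^{(s)}}(\tau_\Gamma > 1)}.
\]
Thus it suffices to prove $g_s(y) \to n^\alpha(y)$ pointwise in $y\in\Gamma$, together with a tightness statement upgrading this to convergence of the conditional distributions on Borel $A \subset \Gamma$.

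By {\bf A2} and \cite[Theorem 7, Proposition 2]{WCTGBT}, $\psi$ is regularly varying at $0$ with index $\alpha$, and a short computation based on $\psi(\lambda(s)) = 1/s$ yields $\psi^{(s)}(\xi) \to |\xi|^\alpha$ locally uniformly in $\xi$, whence $Y^{(s)}$ converges to $\bfX^\alpha$ in finite-dimensional distributions. Simultaneously $\lambda(s)x \to 0$, so the limit combines (i) a deformation of the driving L\'evy process toward $\bfX^\alpha$ with (ii) a vertex limit for the starting point. The stable identity \eqref{eq:limit_n} is precisely such a vertex limit for a fixed process; the task is to upgrade it to a joint limit in which the process varies with $s$.

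The tools I would deploy are: (a) two-sided Dirichlet heat kernel bounds for $Y^{(s)}$ on $\Gamma$, uniform in $s$, coming from the machinery of \cite{KBTGMR-dhk}, which require the weak upper scaling {\bf A1} transported to $\psi^{(s)}$, as well as {\bf A3} for boundedness of the free transition density; (b) the boundary Harnack inequality for $Y^{(s)}$ with constants uniform in $s$, a consequence of the same scaling; (c) the stable limits \eqref{e.Ysl}--\eqref{eq:limit_n}. With (a) and (b) in hand, both the numerator and the denominator of $g_s(y)$ can be factored near the vertex through a common, approximately harmonic, profile, so that the vertex asymptotics cancel in the ratio; passing $s\to\infty$ using $Y^{(s)} \Rightarrow \bfX^\alpha$ then reduces the problem to the stable case, where \eqref{eq:limit_n} identifies the limit as $n^\alpha(y)$. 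Tightness of $\{g_s\}$, needed to upgrade pointwise density convergence to convergence of conditional measures on $A$, follows from the same uniform upper bounds on $p_1^{\Gamma,(s)}$.

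The main obstacle is precisely the uniformity in $s$ of the boundary Harnack inequality and of the Dirichlet heat kernel estimates for the varying family $\{Y^{(s)}\}$. This is where {\bf A2} has to be leveraged beyond mere convergence of characteristic exponents: the weak upper scaling of $\nu$ in {\bf A1}, which is intrinsic to $X$, must be transferred to the rescaled L\'evy densities with constants independent of $s$, and the slowly varying parts of $\psi$ and $\nu$ must be shown not to accumulate at the spatial and temporal scales that are relevant for the cone. Once this uniformity is secured, the remainder is a routine diagonal passage to the limit that invokes \eqref{eq:limit_n} as a black box.
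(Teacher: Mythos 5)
Your proposal follows essentially the same route as the paper: rescale to $\bfX^s$, identify the conditional law of $\psi^{-1}(1/s)X_s$ given $\{\tau_\Gamma>s\}$ with the time-1 conditional density of the rescaled process started from $\psi^{-1}(1/s)x$, and pass to the limit using uniform-in-$s$ Dirichlet heat kernel and boundary Harnack estimates together with \eqref{eq:limit_n}, the obstacle being precisely the uniformity you flag. The paper implements the joint limit (process deformation plus vertex limit) via a Moore--Osgood interchange hinging on the uniform boundary-limit Theorem~\ref{thm:uni_lim}, convergence of Green potentials (Lemma~\ref{lem:Gfs_limit}), and a Prokhorov tightness argument, which is what your phrase ``routine diagonal passage'' compresses.
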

For example, the \textit{rescaling} factor is $\psi^{-1}(1/s)=(e^{1/s}-1)^{1/\alpha}$ for the geometric $\alpha$-stable L\'evy process. A qualitatively different rescaling can be found in Example~\ref{e.da}, so the \textit{universality} of the Yaglom limit for the considered class of processes refers to the limit but not the rescaling. 
The proof of Theorem \ref{thm:Yaglom} is given at the end of Section~\ref{sec:Martin}. In fact, the whole paper is devoted to this proof, but the setting and some auxiliary results may be of independent interest, e.g., the fact that in general $\bfX$ is \textit{not} self-similar but we have pointwise convergence of  density functions and uniformity of potential-theoretic estimates and asymptotics for the \textit{rescaled processes} $(\psi^{-1}(1/s)X_{st} \colon t > 0)$ parametrized by $s\ge 1$. We note in passing that  \eqref{e.Yl} also holds for $\Gamma=\Rd$, as a consequence of Lemma \ref{lem:hk_conv} and Scheff\'{e}'s lemma, which may also be of interest. 

Let us discuss in more detail some classical results on quasi-stationary distributions. {The seminal work of Yaglom appeared in 1947.} 
Since then a {huge} number of papers {have dealt with this problem}; a comprehensive {survey is given by} Pollet  \cite{Pollet08}. {In particular,} the discrete-time Markov chains were analysed by Seneta and Vere-Jones \cite{MR207047} in 1967 and by Tweedie \cite{Tweedie74} in 1974. In 1995 Jacka and Roberts \cite{JackaRoberts95} studied the continuous-time case. In 1971 E.~Kyprianou \cite{AEK71} considered a conditional limit distribution {for} the virtual waiting time in queues with Poisson arrival. Quasi-stationary distributions for Markov chains {on non-negative integers} with {absorption at} the origin {were considered by} Ferrari et al. \cite{MR1334159}, Flaspohler and Holmes \cite{MR346932}, \cite{MR207047} and van Doorn \cite{MR1133722}.

In general, studies on quasi-stationary distributions {are model specific ---} each class of stochastic processes {is} treated in a different way. {Let us further mention the quasi-birth-and-death processes (see, e.g., Bean et al. \cite{BBLPPT, BPT98, BPT00}), Fleming-Viot processes (Asselah et al. \cite{MR3498004} or Ferrati and Mari\'{c} \cite{MR2318407}) and branching processes ({Lambert \cite{MR2299923}}, see also Ren et al. \cite{MR3841403, MR4102269} and Harris et al. \cite{HHKW} for the so-called critical neutron transport), which are the continuous-space-time analogues of the Bienaym\'{e}-Galton-Watson process.} {Finally}, Bean et al. \cite{BOP19} gave Yaglom limits for stochastic fluid models.

	Let us turn our attention {back}  to L\'{e}vy processes. The case of the Brownian motion with drift was resolved by Mart\'{\i}nez and San Mart\'{\i}n \cite{SMJS94} in 1994. {The result} is an analogue of the random walk case studied by Iglehart \cite{MR368168} in 1974. The case of {the so-called} spectrally one-sided L\'{e}vy processes was investigated  in \cite{KyprianouPalmowski06} in 2006, using the Wiener-Hopf factorisation. The 
absorption time in \cite{KyprianouPalmowski06} is taken as the first ruin time:
\[
\tau_0^- = \inf \{t \geq 0 \colon X_t<0\}.
\]
A similar problem was investigated by Czarna and Palmowski \cite{MR3648297} for the Parisian ruin, 
\[
\tau^\theta = \inf \left\{ t>0\colon t-g_t>\textrm{\bf e}_{\theta}^{g_t} \right\}.
\]
Here $g_t = \sup \{s \leq t \colon X_s \geq 0\}$ and $\mathrm{\bf e}_\theta^{g_t}$ is an independent exponential random variable with intensity $\theta>0$, so the ruin occurs if  negative {values of $X$ persist longer than} ${\rm Exp}(\theta)$.

The spectrally one-sided L\'{e}vy processes were also considered by Mandles et al. in \cite{MR2959448} and Palmowski and Vlasiou \cite{MR4171931} provided the speed of convergence to the quasi-stationary distribution, which turned out surprisingly slow.

Limits similar to \eqref{eq:yaglom} for self-similar Markov processes in the one-dimensional case were studied by Haas and Rivero \cite{HassRivero12}. 
{Note that the normalization by $g$ in \cite{HassRivero12} strongly depends on the tails of the L\'{e}vy measure.}

To the best of our knowledge, the only study of Yaglom limits for unbounded sets in the multidimensional case is \cite{BoPaWa2018}, except for results similar to Zhang et al. \cite{MR3247530} on Markov processes killed upon leaving a set of bounded volume. The volume boundedness allows to employ the first eigenfunction, however it is a prohibitive restriction in our setting.

Finally, let us note that the conditioning defined by \eqref{eq:yaglom} is related to but different than the {conditioning} of the process to stay {forever} in {a} set; we refer to Bertoin and Doney \cite{MR1331218} for the case of random walks and to Bertoin \cite{MR1232850}, Chaumont \cite{MR1419491} and Chaumont and Doney \cite{MR2164035} {for the case of} continuous-time processes.

The proof of Theorem~\ref{thm:Yaglom} is inspired by \cite{BoPaWa2018}, the limit resulting from boundary asymptotics of Green potentials, but there are fundamental difficulties to overcome, mostly the lack of self-similarity (exact scaling) of $\bfX$, low regularity of the heat kernel of the process, highly non-trivial uniform in $s\ge 1$ estimates for the rescaled processes 
$(\psi^{-1}(1/s)X_{st} \colon t > 0)$, 
and delicate considerations related to continuity of their random functionals, mainly the first exit time of $\Gamma$, in the Skorokhod topology as $s\to \infty$. A crucial step in our development is Theorem \ref{thm:uni_lim}, which establishes \textit{stability}, or \textit{uniformity}, of limits of ratios of harmonic functions at the boundary of the cone. Namely, we refine the already general statement of Kwa\'{s}nicki and Juszczyszyn \cite{KJ17}, by proving that for the rescaled processes the ratios converge uniformly. Furthermore, we avoid superfluous technical assumptions on $\nu$ -- this makes the arguments on convergence of integrals tricky, but statements simple.  Of course, a number of further questions is now open: the case of non-Lipshitz cones $\Gamma$, the Yaglom limit for anisotropic stable L\'evy processes (even for processes with independent components) and universality classes for Yaglom limits in each such case.

The structure of the paper is as follows. In Section \ref{sec:prelims} we discuss definitions and basic results.
In Section~\ref{s.hke} we give estimates of the (Dirichlet) heat kernel of $\Gamma$ for the process $\bfX$.
In Section~\ref{s.rp} we analyse the effects of rescaling of $\bfX$, including the convergence of den\-sities of the rescaled processes.
In particular, Section~\ref{s.ubhp} is devoted to estimates of harmonic functions and oscillation-reduction, uniform for the rescaled processes, and in Section~\ref{s.ui} we verify the convergence of the survival probabilities. In Section~\ref{sec:Martin} we obtain normalized limits of Green potentials, expressed in terms of the Martin kernel, which we then bootstrap to the level of the heat kernel and  the heat kernel conditioned by the survival probability, to finally prove Theorem~\ref{thm:Yaglom} using  the uniformity of relative estimates and asymptotics with respect to the scaling parameter $s$, as $s\to \infty$.

\section{Notation and preliminaries}\label{sec:prelims}
By $\calR_0^{\alpha}$ we denote the class of (positive) functions {$f \colon (0,\infty) \mapsto (0,\infty)$} which are regularly varying at the origin with index $\alpha$. Thus, $f \in \calR_0^{\alpha}$ if for every $\lambda>0$,
\[
\lim_{x \to 0^+} \frac{f(\lambda x)}{f(x)} = \lambda^{\alpha}.
\]
Furthermore, we say {that} $f$ is regularly varying at infinity with index $\alpha$ and we write $f \in \calR_{\alpha}^{\infty}$ if for {every} $\lambda>0$,
\[
\lim_{x \to \infty} \frac{f(\lambda x)}{f(x)} = \lambda^{\alpha}.
\]

{Notation $c=c(a,b,\ldots)$ means that number $c\in (0,\infty)$, called \textit{constant}, may be so chosen to depend only on $a,b,\ldots$.} {For non-negative functions $f,g$} we use the notation $f \lesssim g$ if there is a constant $c > 0$ such that $f \leq c g$ {and we write} $f \stackrel{c}{\approx} g$ if $c^{-1} g \leq f \leq c g$. If the constant $c$ is not important, we may write $f \approx g$. {As usual}, $B(x,r)=\{y \in \Rd \colon |y-x|<r\}$ is {the open} ball in $\Rd$ with radius $r>0$ and center $x\in \Rd$. We let $B_r:=B(0,r)$. For $x \in D\subset \Rd$ we let $\delta_{D}(x) = \inf \{ |y-x|\colon y \in D^c \}$. All the considered sets, measures and functions are tacitly assumed Borel.

As in Introduction, $\bfX = (X_t\colon t\geq 0)$ is a non-trivial pure-jump isotropic unimodal L\'{e}vy process in $\Rd$ with $d \geq 1$. The L\'{e}vy density $\nu$ is non-integrable but satisfies the L\'{e}vy measure condition in {\eqref{eq:kb04}} and the L\'{e}vy-Khintchine exponent $\psi$ is given by {\eqref{eq:kb03}}.
The L\'evy-Khinchine formula {\eqref{eq:kb05} relates the distribution of $X_t$ to $\psi$ (and $\nu$).}
Since $\bfX$ is isotropic unimodal and has infinite L\'{e}vy measure, the distribution of $X_t$ is absolutely continuous if $t>0$. We denote the density by $p_t$. {In fact}, due to {Kulczycki and Ryznar} \cite[Lemma 2.5]{MR3413864}, $p_t$ is continuous on $\mathbb{R}^d\setminus\{0\}$. 
We define
$$p_t(x,y)=p_t(y-x) \quad \mbox{and}\quad \nu(x,y)=\nu(y-x),\quad t>0,\, x,y\in \Rd,$$ 
the heat kernel  (the transition probability density) and the jumping kernel of $\bfX$, {correspondingly}. Of course, $p_t(x,y)=p_t(y,x)$ and $\nu(x,y)=\nu(y,x)$. Let
\[
P_t(x,A) = \int_A p_t(x,y)\,dy, \quad t>0,\ x \in \Rd, \ A \subset \Rd.
\]
It is the transition {probability} of $\bfX$. {We also define the operator semigroup
\[
P_t f(x) = \int_{\Rd} f(y) p_t(x,y)\,dy, \quad  f \in C_0(\Rd),\ x \in \Rd, \ t > 0.
\]
} Its infinitesimal generator is
\[
\calL \vphi(x) = \lim_{\epsilon \to 0^+} \int_{|y|>\epsilon} \big( \vphi(x+y)-\vphi(x) \big)\nu(y)\,dy,
\]
for 
$\vphi \in C_c^{\infty}(\Rd)$,
see, e.g., Sato \cite[Theorem 31.5]{Sato99}. 
In particular,
we let
$$
\nu^\alpha(y)
= \mathcal{A}(d,\alpha) |y|^{-d-\alpha}\,,\quad y\in \Rd,
$$
where $\alpha \in (0,2)$ and
\[
\mathcal{A}(d,\alpha) = \frac{ \alpha 2^{\alpha-1}\Gamma\big((d+\alpha)/2\big)}{\pi^{d/2}\Gamma(1-\alpha/2)}.
\]
Then,
\begin{equation}
  \label{eq:trf}
 \psi(\xi)=\psi^\alpha(\xi)= \int_{\Rd} \left(1-\cos(\xi\cdot y)\right)\nu^\alpha(y)dy=|\xi|^\alpha\,,\quad
  \xi\in \Rd\,,
\end{equation}
and 
$\calL = \Delta^{\alpha/2} := - (-\Delta)^{\alpha/2}$, the fractional Laplacian; see, e.g., Kwa\'{s}nicki \cite{MK17}.

Recall that open set $D \subset \Rd$ is {called} Lipschitz if there are numbers $R_0>0$ and $\lambda>0$ such that for every $Q \in \partial D$ there exist an orthonormal coordinate system $\mathrm{CS}_Q$ and a Lipschitz function $f_Q \colon \R^{d-1} \mapsto \R$ with Lipschitz constant $\lambda$ such that if $y=(y_1,\ldots,y_n)$ in $\mathrm{CS}_Q$ coordinates, then
\[
D \cap B(Q,R_0) = \{y \colon y_n > f_Q(y_1,\ldots,y_{n-1})\} \cap B(Q,R_0).
\]
Every Lipschitz set $D$ is $\kappa$-fat, i.e., there exists $\kappa \in (0,1)$ and $R>0$ such that for every $r \in (0,R)$ and $Q \in \overline{D}$ there is a point $A=A_r(Q) \in D \cap B(Q,r)$ such that $B(A,\kappa r) \subset D\cap B(Q,r)$. The pair $(\kappa,R)$ is sometimes called the characteristics of a $\kappa$-fat set $D$. Note that usually $A_r(Q)$ is not uniquely determined.

{As in Introduction,} $\Gamma\in \Rd$ is a Lipschitz cone with vertex at the origin, i.e., (non-empty) open Lipschitz set such that $0 \in \partial \Gamma$ and $ry \in \Gamma$ whenever $y \in \Gamma$ and $r>0$. Note that in this way we exclude $\Gamma=\Rd$ and $\Gamma=\emptyset$ from our discussion (but see a remark following Theorem~\ref{thm:Yaglom}). Without essential loss of generality we may and do assume that $\one := (0,\dots,0,1) \in \Gamma$. We note that if $d=1$, then  necessarily $\Gamma=(0,\infty)$ and our results do not cover $\Gamma=\R\setminus\{0\}$. We also observe that $\Gamma$ is $\kappa$-fat with characteristics $(\kappa,R)$ for every $R>0$, with $\kappa$ independent of~$R$.

The transition density of {the process} killed on exiting $\Gamma$ is defined by Hunt's formula:
\begin{equation}\label{eq:Hunt}
p_t^{\Gamma}(x,y) := p_t(x,y) - \E_x [\tau_{\Gamma}<t;p_{t-\tau_{\Gamma}}(X_{\tau_{\Gamma}},y)], \quad t>0,\, x,y \in \Rd.
\end{equation}
We have $0 \leq p_t^{\Gamma}(x,y) \leq p_t(x,y)$ for all $t>0$ and $x,y\in\Rd$. It is well known that 
$p_t^{\Gamma}$ is symmetric: $p_t^{\Gamma}(x,y)=p_t^{\Gamma}(y,x)$ for all $x,y \in \Rd$ {and $t>0$}. Moreover, the Chapman-Kolmogorov equations (semigroup property) hold {for $p^{\Gamma}$}:
\[
p_{t+s}^{\Gamma}(x,y) = \int_{\Rd}p_t^{\Gamma}(x,z)p_s^{\Gamma} (z,y)\,dz, \quad t,s>0,\,x,y \in \Rd.
\]
For every non-negative or bounded function $f$ we let
\[
P_t^\Gamma f(x) 
= \E_x [\tau_{\Gamma}>t;f(X_t)] = \int_{\Rd} p_t^{\Gamma}(x,y)f(y)\,dy, \quad {x \in \Rd,\ t>0.}
\]
We further note that
\begin{equation}\label{eq:surv}
	\P_x(\tau_{\Gamma}>t) = \int_{\Gamma} p_t^{\Gamma}(x,y)\,dy, \quad {x \in \Rd,\ t>0}.
\end{equation}
Analogous definitions and properties are valid for arbitrary open set $D\subset \Rd$.
Function $u\colon \Rd \mapsto \R$ is called  harmonic with respect to $\bfX$ on open set $D \subset \Rd$ if
\[
u(x) = \E_x u(X_{\tau_B}), \quad x \in B,
\] 
for all open bounded sets $B$ such that $\overline{B} \subset D$.
Here we assume that the integral on the right-hand side is absolutely convergent. The function $u$ is called regular harmonic in $D$ if the identity above is satisfied with $B=D$. The concept of regular harmonicity is related to the notion of harmonic measure $P_D(x,\,\cdot\,)$, i.e., the distribution of $X_{\tau_D}$:
\[
P_D(x,A) = \P_x(X_{\tau_D} \in A), \quad A \in \Rd, \,x \in \Rd.
\]
Namely, function $u$ regular harmonic on $D$ satisfies
\[
u(x) = \int_{D^c} u(y)\,P_D(x,dy), \quad x \in D.
\]
The density of the harmonic measure on $\overline{D}^c$ is called the Poisson kernel and is denoted by $P_D(x,z)$ for $x \in D$, $z \in \overline{D}^c$ (see also \eqref{eq:IW2} below). For simplicity we write $P_{B_r} =: P_r$.

The Green function of $D$ is given by
\[
G_{D}(x,y) = \int_0^{\infty}p_t^D(x,y)\,dt, \quad \,x,y \in \Rd.
\]
In the case of $D=\Rd$ we write $U:=G_{\Rd}$ and $U$ is then called the potential kernel of $\bfX$. If $\psi \in \calR_{\alpha}^0$, for $\alpha\in(0,2)$, and $d\geq 2$, then by the Chung-Fuchs criterion (see, e.g., \cite[Corollary 37.6]{Sato99}),
 %and the fact that $d \geq 2$ 
$\bfX$ is transient and 
the potential kernel is finite $a.e$. The Green function gives rise to the Green operator, defined as
\[
G_D f(x) = \int_{\Gamma}G_D(x,y)f(y)\,dy, \quad x \in \Rd, 
\]
for non-negative or integrable functions $f$. Then, by the Fubini's theorem,
\[
\E_x \int_0^{\tau_D} f(X_t)\,dt = \int_D G_D(x,y)f(y)\,dy.
\]
In particular, by letting $f = \one$ we obtain $G_D \one(x) = \E_x \tau_D$.

Let $r>0$. Consider the \textit{truncated cone}:
\[
\Gamma_r = \Gamma \cap B_r.
\]
The strong Markov property implies that for all $t>0$ and $x,y \in \Rd$,
\[
p_t^{\Gamma}(x,y) = p_t^{\Gamma_r}(x,y) + \E_x\left[\tau_{\Gamma_r}<t;p_{t-\tau_{\Gamma_r}}^{\Gamma}(X_{\tau_{\Gamma_r}},y) \right].
\]
Integrating the identity with respect to $dt$ we obtain
\begin{equation}\label{eq:G_harm}
G_{\Gamma}(x,y) = G_{\Gamma_r}(x,y) + \E_x G_{\Gamma}(X_{\tau_{\Gamma_r}},y), \quad x,y \in \Rd.
\end{equation}
It follows that $x \mapsto G_{\Gamma}(x,y)$ is regular harmonic on $\Gamma_r$ if $|y|\ge r$.

For $x \in D$ the distribution of $(\tau_D,X_{\tau_D-},X_{\tau_D})$ restricted to the event $\{X_{\tau_D-} \neq X_{\tau_D}\}$ has the following density function:
\[
(0,\infty) \times D \times D^c \ni (u,y,z) \mapsto \nu(y,z)p_u^{\Gamma}(x,y), 
\]
that is
for $I \subset (0,\infty)$, $A \in D$ and $B \subset D^c$ we have
\begin{equation}\label{eq:IW}
\P_x (X_{\tau_D-} \neq X_{\tau_D}, \tau_D \in I, X_{\tau_D-} \in A, X_{\tau_D} \in B) = \int_I \int_A\int_{B} p_u^{\Gamma}(x,y)\nu(y,z)\,dz\,dy\,du.
\end{equation}
{This is called the Ikeda-Watanabe formula.} Denote
\[
\kappa_D(y) = \int_{D^c}\nu(y,z)\,dz, \quad y \in D.
\]
We call $\kappa_D$ the killing intensity. Assume now that the set $D$ is Lipschitz. Then, by Sztonyk \cite[Theorem 1]{Sztonyk00}, for all $x \in D$ we have $\P_x(X_{\tau_D} \in \partial D)=0$ and
\begin{equation}\label{e.p1}
\P_x(X_{\tau_{D}-} = X_{\tau_{D}}) = 0.
\end{equation}
Letting $I=(t,\infty)$, $A=D$ and $B=D^c$ in \eqref{eq:IW}, by Chapman-Kolmogorov equations,
\begin{align*}
	\P_x(\tau_{D}>t) &= \int_t^{\infty} \int_{\Gamma} p_u^D(x,y)\kappa_D(y)\,dy\,du \\ &= \int_0^{\infty} \int_{\Gamma} \int_{\Gamma} p_u^D(x,w)p_t^D(w,y)\,dw \kappa_D(y)\,dy\,du \\ &= G_D P_t^D \kappa_D(x), \quad x \in D.
\end{align*} 
Furthermore, $\P_x$-$a.s.$ we have $\tau_D = 0$ for every $x \in D^c$, so
\begin{equation}\label{eq:surv_id}
	\P_x(\tau_D>t) = G_D P_t^D \kappa_D(x), \quad x \in \Rd.
\end{equation}
The following identities, {also} known as Ikeda-Watanabe formulae, are vital for our development. {Namely}, by setting $I=(0,\infty)$ and $A=D$ in \eqref{eq:IW} we obtain 
\begin{equation}\label{eq:IW2}
P_D(x,z) = \int_D G_D(x,y)\nu(y-z)\,dy, \quad x \in D, z \in \overline{D}^c.
\end{equation}
Consequently, for every function $u$ regular harmonic in $D$ with respect to $\bfX$ we have
\begin{equation}\label{eq:Poisson_f}
u(x) = \int_{D^c}P_D(x,z)u(z)\,dz, \quad {x \in D}.
\end{equation}
After Pruitt \cite{Pruitt81}, we define the \textit{concentration functions} for the L\'{e}vy density $\nu$,
\[
K(r) = r^{-2}\int_{B_r} |z|^2 \nu(z)\,dz \qquad \text{and} \qquad h(r) = \int_{\Rd} \bigg(\frac{|z|^2}{r^2} \wedge 1\bigg)\nu(z)\,dz, \quad r>0.
\]
We note that $h$ is strictly decreasing and for all $r>0$ and $\lambda \leq 1$,
\begin{equation}\label{eq:h_doubling}
\lambda^2h(\lambda r) \leq h(r) \leq h(\lambda r).
\end{equation}
{In particular}, $h$ is doubling on $(0,\infty)$. Furthermore, by {Grzywny} \cite[Lemma 4]{TG14}, 
\begin{equation}\label{eq:52}
	\frac{1}{8(1+2d)}h(1/r) \leq \psi^*(r) \leq 2h(1/r),\quad r>0,
\end{equation}
where $\psi^*$ is the {radially non-decreasing} majorant of $\psi$, i.e., 
\[
\psi^*(r) = \sup_{|z|\leq r}\psi(z), \quad {r\geq 0}.
\]
By {Bogdan, Grzywny and Ryznar} \cite[Proposition 2]{KBTGMR-jfa},
\begin{equation}\label{eq:64}
	{\psi(r) \leq} \psi^*(r) \leq \pi^2 \psi(r), \quad {r \geq 0,}
\end{equation}
{where $\psi(r) := \psi(x)$ for $r=|x|$ and $x \in \Rd$.}

We conclude this section by listing collecting some {consequences of} {\bf A1}, {\bf A2} or {\bf A3}. 
First we observe that {\bf A1} implies that for {every} $r_0>0$ there is $c=c(r_0)$ such that
\begin{equation}\label{eq:nu_comp}
        c\nu(r) \leq \nu(r+1) \leq \nu(r), \quad r>r_0.
\end{equation}
Next, note that the monotonicity of the L\'{e}vy density entails that
\begin{equation}\label{eq:53}
	\nu(r) \leq c(d)K(r)r^{-d},\quad r>0.
\end{equation}
\begin{proposition}\label{prop:K}
	Assume {\bf A1}. For every $\lambda \leq 1$ and $r>0$,
	\[
	K(\lambda r) \leq M\lambda^{-\beta}K(r).
	\]
	Furthermore, 
	\[
	\nu(r) \approx r^{-d}K(r), \quad r>0,
	\]
	with the comparability constant depending only on $d$, $M$ and $\beta$. If we additionally assume {\bf A2}, then  for every $R>0$ there exists {constant} $c>0$ such that
	\[h(r)\leq c K(r),\quad r\geq R.\]
\end{proposition}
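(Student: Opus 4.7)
The three assertions can be handled in sequence, each leaning on the previous one.

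For the first inequality I would substitute $z=\lambda w$ in the defining integral of $K(\lambda r)$, so that $dz=\lambda^d\,dw$, $|z|^2=\lambda^2|w|^2$, and the domain $B_{\lambda r}$ becomes $B_r$. This gives
\[
K(\lambda r)=(\lambda r)^{-2}\int_{B_{\lambda r}}|z|^2\nu(z)\,dz
=\lambda^d r^{-2}\int_{B_r}|w|^2\nu(\lambda w)\,dw.
\]
Since $\lambda\leq 1$, for every $w\neq 0$ we have $\lambda|w|\leq|w|$, so \textbf{A1} applied with $r_1=\lambda|w|$ and $r_2=|w|$ yields $\nu(\lambda w)\leq M\lambda^{-d-\beta}\nu(w)$. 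Plugging this in gives the bound with constant $M$ and the power $\lambda^{d}\cdot\lambda^{-d-\beta}=\lambda^{-\beta}$.

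For the second claim, the upper bound $\nu(r)\leq c(d)\,r^{-d}K(r)$ is already recorded as \eqref{eq:53}. For the reverse inequality, passing to polar coordinates,
\[
K(r)=r^{-2}\omega_{d-1}\int_0^r s^{d+1}\nu(s)\,ds,
\]
and I would use \textbf{A1} to estimate $\nu(s)\leq M(s/r)^{-d-\beta}\nu(r)$ for $0<s\leq r$. The resulting integrand is proportional to $s^{1-\beta}$, which is integrable on $(0,r)$ precisely because $\beta<2$; performing the integration produces $K(r)\leq\frac{M\omega_{d-1}}{2-\beta}\,r^d\nu(r)$, which is the missing direction. The constant thus depends only on $d$, $M$, $\beta$.

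For the third assertion the natural decomposition is
\[
h(r)=K(r)+\int_{B_r^c}\nu(z)\,dz,
\]
so it suffices to dominate the tail by $K(r)$ for $r\geq R$. Using the comparability proved in the previous step, $K(r)\approx r^d\nu(r)$, this reduces to showing $\int_{B_r^c}\nu(z)\,dz\lesssim r^d\nu(r)$. Passing to polar coordinates, the tail equals $\omega_{d-1}\int_r^\infty s^{d-1}\nu(s)\,ds$, and by \textbf{A2} the integrand $s\mapsto s^{d-1}\nu(s)$ is regularly varying at infinity with index $-\alpha-1<-1$. Karamata's theorem then yields
\[
\int_r^\infty s^{d-1}\nu(s)\,ds\sim\tfrac{1}{\alpha}\,r^d\nu(r)\qquad\text{as }r\to\infty,
\]
which handles the regime $r\geq R_1$ for some sufficiently large $R_1=R_1(R)$. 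For the residual interval $R\leq r\leq R_1$ both the tail and $K$ are positive and finite (the tail is non-increasing, $K$ is strictly positive on any compact subset of $(0,\infty)$), so their ratio admits a finite maximum $C(R,R_1)$; taking the larger of the two constants produces the uniform bound for $r\geq R$.

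The main obstacle is the last step: \textbf{A2} is only an asymptotic statement, whereas the conclusion has to hold for all $r\geq R$. The compactness argument on $[R,R_1]$ bridges this gap, but one has to be mindful that \emph{no} absolute constant (independent of $R$) should be expected, which matches the statement of the proposition allowing $c$ to depend on $R$.
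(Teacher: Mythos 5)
Your proof is correct, and it is more self-contained than the paper's: the authors dispatch the first two parts by citing \cite[Lemma A.3]{TGKS17} and prove the third via Potter bounds, \cite[Lemma 2.5]{TGKS19}, and monotonicity of $\nu$, with no computations shown. You instead give a direct change of variables for the scaling of $K$, integrate $s^{1-\beta}$ explicitly (correctly invoking $\beta<2$ for convergence) to get $K(r)\lesssim r^d\nu(r)$, and apply Karamata's theorem for the tail asymptotics in the last claim. Both routes are sound and yield constants with the right dependence; your version has the advantage of being verifiable without chasing references, whereas the paper's is shorter.

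Two minor remarks. First, in place of your ``ratio has a finite maximum on $[R,R_1]$'' argument, the paper appeals to positivity and radial monotonicity of $\nu$ to push the threshold down; your compactness argument is equally valid but does implicitly rely on continuity of $r\mapsto K(r)$ and $r\mapsto\int_{B_r^c}\nu$, which you might mention explicitly (both hold here since $\nu$ is locally integrable away from $0$). Second, the decomposition $h(r)=K(r)+\int_{B_r^c}\nu$ that you use at the start of the third step is exactly right and is the cleanest way to state the relationship between $h$ and $K$; the paper later uses the same decomposition in the proof of Proposition~\ref{prop:nu_h_conv}.
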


\begin{proof}
The assumption {\bf A1} together with {Grzywny and Szczypkowski} \cite[Lemma A.3]{TGKS17} immediately imply the first claim. {For large} $R>0$ {the last claim} is a consequence of  {\bf A2}, Potter bounds for $\nu$ and {Grzywny and Szczypkowski} \cite[Lemma 2.5]{TGKS19}. Using positivity and monotonicity of $\nu$ it is easy to make the threshold {$R>0$} arbitrary. 
\end{proof}

\begin{lemma}\label{lem:Ub}
Assume {\bf A1}. If $d\geq 2$ then there is a constant $c=c(d,M,\beta)$ such that
$$U(x)\leq \frac{c}{h(|x|)|x|^d},\quad x\neq 0.$$
For $d=1$ we have
\[G_{(0,\infty)}(x,y)\leq \frac{c}{y\sqrt{h(x)h(y)}},\quad 0<x<y.\]
\end{lemma}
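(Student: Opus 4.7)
The plan is to reduce both claims to standard potential-theoretic bounds for isotropic unimodal L\'evy processes available under {\bf A1}; the two dimensions require different machinery.

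For $d\ge 2$ I would work directly with $U(x)=\int_0^\infty p_t(x)\,dt$ and split the integral at the scale $T_x:=1/h(|x|)$, which is comparable with $1/\psi(1/|x|)$ by \eqref{eq:52}--\eqref{eq:64}. Isotropy and unimodality make $p_t$ radial and non-increasing in $|x|$, and the elementary averaging inequality $p_t(x)|B_{|x|}|\le \int_{B_{|x|}}p_t(y)\,dy\le 1$ yields $p_t(x)\le c/|x|^d$, so the short-time piece contributes at most $cT_x/|x|^d=c/(|x|^d h(|x|))$. For the long-time piece I would use $p_t(x)\le p_t(0)\le (2\pi)^{-d}\int_{\Rd}e^{-t\psi(\xi)}\,d\xi$ together with the fact that {\bf A1} forces positive-index weak upper scaling of $\psi$ (via Proposition \ref{prop:K} combined with \eqref{eq:52}--\eqref{eq:64}). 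This gives the standard bound $p_t(0)\lesssim (\psi^{-1}(1/t))^d$, and the substitution $u=1/t$ together with the regular scaling of $\psi^{-1}$ shows that the long-time contribution is again of order $1/(|x|^d h(|x|))$. Equivalently, one recovers Proposition~3 of \cite{KBTGMR-jfa}, which may be invoked directly once its scaling hypothesis is derived from {\bf A1}.

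For $d=1$ the only Lipschitz cone is $(0,\infty)$, and since $\bfX$ is symmetric, the Green function of the half-line admits the Silverstein/Wiener--Hopf representation in terms of the renewal function $V$ of the ascending ladder height process. Under {\bf A1}, Pruitt-type estimates yield $V(r)\approx 1/\sqrt{h(r)}$ (cf.\ \cite{TG14}). For $0<x<y$ the integrand in the factorisation is largest near the lower endpoint and comparable there to $V'(0+)V'(y-x)$; substituting $V\approx 1/\sqrt{h}$, using concavity and monotonicity of $V$, and absorbing $y-x$ into $y$ via the doubling \eqref{eq:h_doubling} of $h$, produces the claimed bound $G_{(0,\infty)}(x,y)\le c/(y\sqrt{h(x)h(y)})$.

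The hard part in both cases is converting the scaling hypothesis {\bf A1} on the L\'evy density $\nu$ into an effective upper scaling of $\psi$ (or of $V$ in $d=1$); Proposition \ref{prop:K} together with \eqref{eq:52}--\eqref{eq:64} does precisely this. Once such scaling is in place, the change of variables in the Fourier integral for $d\ge 2$, and the renewal estimate for $d=1$, are routine, as is the comparison of $h(y-x)$ with $h(y)$ for $0<x<y$.
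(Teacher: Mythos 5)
Your plan for $d\ge 2$ contains a genuine gap. The short-time piece $\int_0^{T_x} p_t(x)\,dt \le c\,T_x/|x|^d$ is fine, but the long-time bound rests on $p_t(0)\lesssim (\psi^{-1}(1/t))^d$, and you attribute this to the weak \emph{upper} scaling of $\psi$ coming from {\bf A1}. That is the wrong direction. Assumption {\bf A1} bounds the decay of $\nu$ from above, hence bounds the growth of $\psi$ from above, $\psi(s\xi)\le cM s^\beta \psi(\xi)$ for $s>1$; but for $\int_{\Rd} e^{-t\psi}\,d\xi$ to be finite and of order $(\psi^{-1}(1/t))^d$ one needs a \emph{lower} growth bound on $\psi$ (weak lower scaling), equivalently something like {\bf A3}. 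Neither is implied by {\bf A1} alone: for example $\nu(r)=r^{-d}\log^{-1}(e/r)$ for $r<1$ glued to $\nu(r)=r^{-d-\alpha}$ for $r\ge 1$ satisfies {\bf A1}, has non-integrable $\nu$, and gives a transient process for $d\ge 2$, yet $\psi(\xi)\sim\log\log|\xi|$, so $e^{-t\psi}\notin L^1$ and $p_t(0)=\infty$. The chain $p_t(x)\le p_t(0)\lesssim(\psi^{-1}(1/t))^d$ therefore has a false middle link, and the same objection applies to your alternative of invoking \cite[Proposition 3]{KBTGMR-jfa}, whose scaling hypotheses again amount to lower scaling. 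The paper avoids the on-diagonal kernel altogether: after recording the WUSC of $\psi$, it quotes \cite[Lemma 5.6]{KBTGMR-ptrf} together with \cite[Theorem 3]{TG14}, which bound $U$ by exit-time and Green-function (sojourn-time) estimates that only require the upper scaling.

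For $d=1$ your sketch is also not as routine as stated. From the factorisation $G_{(0,\infty)}(x,y)=\int_0^{x}V'(x-u)V'(y-u)\,du$ ($0<x<y$) one gets $G_{(0,\infty)}(x,y)\le V(x)V(y-x)/(y-x)$, and with $V\approx 1/\sqrt{h}$ the natural denominator is $(y-x)\sqrt{h(y-x)}$. Since $r\mapsto r\sqrt{h(r)}$ is nondecreasing (by $\lambda^2h(\lambda r)\le h(r)$), replacing $y-x$ by $y$ goes the wrong way once $x$ is close to $y$, so a separate near-diagonal argument would be needed; also $V'(0+)$ is typically infinite, so the phrase ``comparable to $V'(0+)V'(y-x)$'' does not yield a bound. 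The paper's route here is different: it first derives a global scale-invariant Harnack inequality from {\bf A1} via \cite[Theorem 1.9, Remark 1.10 e)]{TGMK} and then transplants the proofs of \cite[Corollaries 5.5 and 5.6]{TGLLMS21}.
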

\begin{proof}
First note that due to the Chung-Fuchs criterion the process is transient if $d\geq 2$. By \cite[Lemma 2.2]{TGKS19} and \eqref{eq:52} with \eqref{eq:64} we have, for $s>1$ and $x\in \Rd$,
$$\psi(s x)\leq c M s^\beta \psi(s),$$
where $c$ depends only on $d$. Then the claim is a consequence of {Bogdan, Grzywny and Ryznar} \cite[Lemma 5.6]{KBTGMR-ptrf} and \cite[Theorem 3]{TG14}.
If $d=1$, then the assumption  {\bf A1} implies  global scale invariant Harnack inequality for the process due to {Grzywny and Kwa\'{s}nicki} \cite[Theorem 1.9 and Remark 1.10 e)]{TGMK}. With this in hand one can repeat the proof of Corrolary 5.6 and the upper bound in Corrolary 5.5 in {Grzywny, Le\.{z}aj and Mi\'{s}ta} \cite{TGLLMS21} to get the claim. 
\end{proof}

\begin{proposition}\label{prop:h_psi}
        Assume {\bf A2}. Then $h^{-1}$ has doubling property on $(0,r)$ for every $r>0$. 
        {Furthermore}, for every $r>0$ there is a constant $c>0$  such that
	\[
	\frac{c^{-1}}{h^{-1}(u)} \leq \psi^{-1}(u) \leq \frac{c}{h^{-1}(u)} , \quad u < r.
	\]
\end{proposition}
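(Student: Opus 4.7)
The plan is to base everything on the global two-sided comparability $\psi(r) \approx h(1/r)$ on $(0, \infty)$, which follows at once by chaining \eqref{eq:52} and \eqref{eq:64}:
\[
\tfrac{1}{8\pi^2(1+2d)}\,h(1/r) \;\leq\; \psi(r) \;\leq\; 2\,h(1/r), \quad r>0,
\]
with constants depending only on $d$. I would also record the preliminary observation that $h\colon (0,\infty)\to(0,\infty)$ is a continuous, strictly decreasing bijection: the monotonicity and continuity are read off the integral definition by dominated convergence, while $h(0^+)=\infty$ comes from $\int\nu = \infty$ and $h(\infty)=0$ from $\int(|z|^2\wedge 1)\nu(z)\,dz<\infty$. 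Consequently $h^{-1}$ is well-defined, continuous and strictly decreasing on $(0,\infty)$.

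For the doubling property I would use regular variation of $\psi$. By \textbf{A2} and the cited Cygan--Grzywny--Trojan equivalence, $\psi \in \calR_0^\alpha$, so the uniform convergence theorem for regularly varying functions gives $\psi(\lambda r)/\psi(r)\to\lambda^\alpha$ as $r\to 0^+$, uniformly for $\lambda$ in any compact subset of $(0,\infty)$. Coupled with the comparison above, this transfers to two-sided scaling bounds for $h$ at infinity: there exists a constant $C=C(d,\alpha)\geq 1$ such that
\[
C^{-1}\lambda^{-\alpha} \;\leq\; \frac{h(\lambda s)}{h(s)} \;\leq\; C\lambda^{-\alpha}
\]
for $s$ sufficiently large and $\lambda$ in any fixed compact subset of $(0,\infty)$. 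Fixing $r>0$ and small $u\in(0,r)$ and setting $s_1=h^{-1}(u)$, $s_2=h^{-1}(u/2)$, one has $s_2>s_1$ and $h(s_2)/h(s_1)=1/2$, and the displayed scaling forces $s_2/s_1$ into a compact subinterval of $(1,\infty)$, yielding $h^{-1}(u/2)\approx h^{-1}(u)$. For $u$ in any subinterval of $(0,r)$ bounded away from $0$, continuity and positivity of $h^{-1}$ on compact sets supply the bound directly; the case of $h^{-1}(2u)/h^{-1}(u)$ is symmetric. This gives the desired doubling on $(0,r)$.

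For the second assertion, fix $r>0$ and $u\in(0,r)$. Since $\psi$ is continuous, vanishes only at the origin, and is unbounded (because $\psi\geq C^{-1}h(1/\cdot)$ and $h(s)\to\infty$ as $s\to 0^+$), $s:=\psi^{-1}(u)\in(0,\infty)$ is well-defined. The global comparison gives $C^{-1}u\leq h(1/s)\leq Cu$, and applying the monotone decreasing $h^{-1}$ yields
\[
\frac{1}{h^{-1}(u/C)} \;\leq\; \psi^{-1}(u) \;\leq\; \frac{1}{h^{-1}(Cu)}.
\]
The doubling of $h^{-1}$ on $(0,Cr)$ established above, applied iteratively $\lceil\log_2 C\rceil$ times, gives $h^{-1}(Cu)\approx h^{-1}(u)\approx h^{-1}(u/C)$ uniformly in $u\in(0,r)$, and the stated two-sided estimate $\psi^{-1}(u)\approx 1/h^{-1}(u)$ drops out.

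The main technical friction sits in the second paragraph: comparability alone does not propagate regular variation from $\psi$ to $h(1/\cdot)$, but it does propagate the weaker two-sided asymptotic bounds $\psi(\lambda r)/\psi(r)\approx\lambda^\alpha$, which is precisely what the doubling argument needs. A cleaner but more computational alternative bypasses $\psi$ altogether: writing $h(s)=K(s)+c_d\int_s^\infty r^{d-1}\nu(r)\,dr$, under \textbf{A2} the integrand $r^{d+1}\nu(r)$ in $K$ is regularly varying at infinity with index $1-\alpha>-1$ and the tail integrand $r^{d-1}\nu(r)$ with index $-1-\alpha<-1$, so Karamata's theorem yields both summands as regularly varying at infinity with index $-\alpha$; hence $h\in\calR_\infty^{-\alpha}$ outright, and the classical inverse theorem gives $h^{-1}\in\calR_0^{-1/\alpha}$, making doubling on $(0,r)$ automatic.
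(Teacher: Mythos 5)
Your proof is correct, and at the strategic level it follows the same blueprint as the paper: (i) establish the doubling property of $h^{-1}$ from the regular variation of $\psi$ supplied by {\bf A2}; (ii) use \eqref{eq:52} (and \eqref{eq:64}) to sandwich $\psi^{-1}(u)$ between $1/h^{-1}(u/c)$ and $1/h^{-1}(cu)$; (iii) collapse the sandwich by iterating the doubling a bounded number of times. The difference is in how step (i) is discharged. The paper's proof is terse on this point: it cites condition {\bf B3} in \cite[Lemma 2.5]{TGKS19} together with ``a standard extension argument and monotonicity of $h^{-1}$'' as a black box. You instead give a self-contained argument, correctly flagging the key subtlety that $\psi \approx h(1/\cdot)$ does not transfer regular variation to $h$ but does transfer two-sided scaling bounds of the form $C^{-1}\lambda^{-\alpha}\le h(\lambda s)/h(s)\le C\lambda^{-\alpha}$, which is exactly what doubling requires. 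The one spot where you are a touch elliptical is the assertion that ``the displayed scaling forces $s_2/s_1$ into a compact subinterval'': one needs a short extra argument (e.g.\ take $\lambda_0$ large enough that $C\lambda_0^{-\alpha}<1/2$, then monotonicity of $h$ rules out $s_2/s_1>\lambda_0$), but this is a one-line fix and does not affect correctness. Your Karamata alternative is a genuinely different and cleaner route: decomposing $h(s)=K(s)+\nu(B_s^c)$ and applying Karamata's theorem under {\bf A2} gives the stronger conclusion that $h\in\calR_{-\alpha}^{\infty}$ outright (both summands are asymptotic to constant multiples of $s^d\nu(s)$), from which $h^{-1}\in\calR_0^{-1/\alpha}$ by the inverse theorem and doubling is automatic; this isolates a fact the paper's cited-lemma approach does not make explicit.
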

\begin{proof}
	Observe that the regular variation of $\psi$ together with \eqref{eq:64} entails {\bf B3} in \cite[Lemma 2.5]{TGKS19}. Thus, the first claim follows by a standard extension  argument and monotonicity of $h^{-1}$. 
	 Next observe that by \eqref{eq:52},
	\[
	\frac{1}{h^{-1}(r/2)} \leq \psi^{-1}(r) \leq \frac{1}{h^{-1}(8(1+2d)r)}, \quad r>0.
	\]
Now we may apply the doubling property of $h^{-1}$ to conclude the proof.
\end{proof}

Now we let $u$ be a harmonic function with respect to $\bfX$ in an open set $U$. By the Poisson formula \eqref{eq:Poisson_f},
\[
u(x) = \int_{|z|>r} P_r(0,z)u(x+z)\,dz,
\]
if only $\overline{B(x,r)} \subset U$. Recall that $P_r(x,\,\cdot\,)$ is the Poisson kernel for the ball $B_r$. Then \cite[Lemma 2.2]{TGMK} entails
\[
u(x) \geq \frac{c}{h(r)} \int_{|z|>r} \nu(z)u(x+z)\,dz
\]
for some constant $c \in (0,1]$. Therefore, using \eqref{eq:nu_comp} we conclude that
\begin{equation}\label{eq:weightedL1}
	\int_{\Rd} u(x) \big( 1 \wedge \nu(x) \big) \,dx < \infty.
\end{equation}

Let $D$ be an arbitrary open set. 
\begin{proposition}\label{prop:BHP}
	Assume {\bf A1} and let $x_0 \in \Rd$ and $r>0$. Suppose that non-negative functions $f,g$ are regular harmonic in $D \cap B(x_0,2r)$ and vanish on $D^c \cap B(x_0,2r)$. Then
	\[
	f(x) \stackrel{\cbhit}{\approx} \E_x \tau_{D \cap B(x_0,4r/3)} \int_{\Rd \setminus B(x_0,5r/3)} f(y) \nu(|y-x_0|)\,dy
	\]
	for $x \in D \cap B(x_0,r)$, where $\cbhit=\cbhit (d,M,\beta)$, and 
	\[
	f(x)g(y) \leq \cbhi f(y)g(x), \quad x,y \in D \cap B(x_0,r),
	\]
	with $\cbhi = \cbhit^4$.
\end{proposition}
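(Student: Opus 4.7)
The plan is to apply the Ikeda--Watanabe formula to a carefully chosen intermediate set and then control the resulting integral using the radial doubling of $\nu$ provided by {\bf A1}.

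Set $U := D \cap B(x_0, 4r/3)$. Since $f$ is regular harmonic in the larger open set $D \cap B(x_0, 2r) \supset U$, by the strong Markov property $f$ is also regular harmonic in $U$. Combining the vanishing of $f$ on $D^c \cap B(x_0, 2r)$ with the Ikeda--Watanabe formula \eqref{eq:IW2} and the Poisson representation \eqref{eq:Poisson_f}, I obtain, for $x \in D \cap B(x_0, r)$,
\[
f(x) = \int_{B(x_0, 4r/3)^c} f(z) \left( \int_U G_U(x,y)\, \nu(y-z)\, dy \right) dz.
\]

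I then split the outer integral into $I_{\mathrm{far}}$ (for $|z-x_0| \geq 5r/3$) and $I_{\mathrm{near}}$ (for $|z-x_0| \in [4r/3, 5r/3)$). For $z$ in the far region and $y \in U \subset B(x_0, 4r/3)$, the triangle inequality yields $\tfrac{1}{5}|z-x_0| \leq |y-z| \leq \tfrac{9}{5}|z-x_0|$; together with radial monotonicity of $\nu$ and the doubling implicit in {\bf A1} (Proposition \ref{prop:K}), this gives $\nu(y-z) \stackrel{c}{\approx} \nu(|z-x_0|)$ with $c = c(d,M,\beta)$. Since $\int_U G_U(x,y)\, dy = \E_x \tau_U$, it follows that
\[
I_{\mathrm{far}} \stackrel{c}{\approx} \E_x \tau_U \int_{B(x_0, 5r/3)^c} f(z)\, \nu(|z-x_0|)\, dz,
\]
and the lower bound in the first claim is immediate by discarding $I_{\mathrm{near}} \geq 0$.

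The main difficulty lies in the matching upper bound $I_{\mathrm{near}} \lesssim I_{\mathrm{far}}$. Only $z \in D$ contribute (as $f \equiv 0$ on $D^c \cap B(x_0, 2r)$), and at each such $z$ I plan to apply regular harmonicity of $f$ on a suitably chosen subset of $D \cap B(x_0, 2r)$, re-invoke Ikeda--Watanabe to express $f(z)$ via points $w$ with $|w-x_0| \geq 5r/3$, and then swap the order of integration via Fubini. Invoking once more the $\nu$-doubling from {\bf A1}, together with a 3G-type bound for the Green function, one converts the resulting double integral into $I_{\mathrm{near}} \leq C I_{\mathrm{far}}$ with $C = C(d,M,\beta)$; the geometric gap between $4r/3$ and $5r/3$ is essential for this comparability step. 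This yields the first claim with constant $\cbhit = \cbhit(d,M,\beta)$. For the second assertion, observe that the factor $J(h) := \int_{B(x_0, 5r/3)^c} h(z)\nu(|z-x_0|)\,dz$ does not depend on $x$, so applying the first claim to $f$ and $g$ separately at $x, y \in D \cap B(x_0, r)$ gives
\[
\frac{f(x)}{g(x)} \leq \cbhit^{2}\, \frac{J(f)}{J(g)} \leq \cbhit^{4}\, \frac{f(y)}{g(y)},
\]
yielding the ratio form of the BHP with $\cbhi = \cbhit^{4}$. The principal obstacle I anticipate is precisely the annular control in the upper bound; the remainder of the argument is a direct consequence of Ikeda--Watanabe and the radial doubling of $\nu$.
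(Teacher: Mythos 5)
Your proposal takes a genuinely different route from the paper: the paper's proof of Proposition~\ref{prop:BHP} is essentially a citation to Grzywny--Kwa\'{s}nicki \cite[Theorem~1.9, Remark~1.10d]{TGMK}, together with a check that the constants there depend only on $d,M,\beta$ once one observes that {\bf A1} matches the scaling hypothesis in that reference (with $R_\infty=\infty$ and $\alpha$ there equal to $\beta$ here). You instead attempt to rederive the BHP from first principles via the Ikeda--Watanabe formula, which is indeed the mechanism behind the cited result.

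However, there is a genuine gap. Your lower bound is correct: writing $U=D\cap B(x_0,4r/3)$, the Poisson representation over $U^c$ reduces (using that $f$ vanishes on $D^c\cap B(x_0,2r)$) to an integral over $B(x_0,4r/3)^c$, and on the far region $|z-x_0|\ge 5r/3$ the triangle inequality gives $\tfrac15|z-x_0|\le|y-z|\le\tfrac95|z-x_0|$ for $y\in U$, so {\bf A1} and radial monotonicity yield $\nu(y-z)\approx\nu(|z-x_0|)$ with constant $c(d,M,\beta)$, and $\int_U G_U(x,y)\,dy=\E_x\tau_U$ closes the estimate. But you explicitly leave the matching upper bound $I_{\mathrm{near}}\lesssim I_{\mathrm{far}}$ as an ``anticipated obstacle'' with only a sketched plan (re-applying Ikeda--Watanabe at each $z$, Fubini, a 3G-type bound). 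This is precisely the technically hard part of every BHP proof for jump processes: one must control the mass in the annulus $\{4r/3\le|z-x_0|<5r/3\}$ by the far mass, which requires either an iteration scheme or a quantitative estimate on $P_U(x,\cdot)$ near $\partial B(x_0,4r/3)$, not just doubling of $\nu$ and Fubini. As written, this step is not proved, and the first assertion of the proposition --- hence also the second, which you correctly deduce from it --- does not follow. If you want to keep the direct approach you would need to fully carry out the near-annulus domination (along the lines of the Bogdan--Kumagai--Kwa\'{s}nicki argument); otherwise, the efficient route is the paper's: invoke \cite[Theorem~1.9]{TGMK} and verify that the constant there is $c(d,M,\beta)$ because {\bf A1} supplies a global scaling condition with the same parameters.
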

\begin{proof}
	By \cite[Remark 1.10d]{TGMK} we get that $\cbhit$ depends only on the characteristics of the process $\bfX$. With the notation from \cite{TGMK} we have $R_{\infty}=\infty$, $\alpha=\beta$, and $M$ {in {\bf A1}} is the same {as in \cite{TGMK}}. Therefore, $\cbhit=\cbhit(d,M,\beta)$ {by \cite[Theorem 1.9]{TGMK}}. 
\end{proof}
\begin{proposition}\label{prop:G_anihilation}
	Assume {\bf A1} and let $\phi \in C_c^{\infty}(\Gamma)$. Then $G_{\Gamma} \calL \phi$ is well defined and
	\begin{equation*}
		G_{\Gamma} \calL \phi(x)=-\phi(x), \quad x\in \Rd.
	\end{equation*}
\end{proposition}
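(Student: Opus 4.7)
The plan is to apply Dynkin's formula to $\phi$ (extended by $0$ to all of $\Rd$, so $\phi\in C_c^\infty(\Rd)$) at the bounded stopping time $\tau_\Gamma\wedge t$, and then pass to the limit $t\to\infty$. Since $\calL\phi$ is bounded, the process $M_t := \phi(X_t) - \phi(x) - \int_0^t \calL\phi(X_s)\,ds$ is a $\P_x$-martingale, and optional stopping gives
\[
\E_x\bigl[\phi(X_{\tau_\Gamma\wedge t})\bigr] - \phi(x) = \E_x\int_0^{\tau_\Gamma\wedge t}\calL\phi(X_s)\,ds.
\]
For $x\notin\Gamma$, Hunt's formula \eqref{eq:Hunt} gives $G_\Gamma(x,\cdot)\equiv 0$ and $\phi(x)=0$, so the identity is trivial; thus I focus on $x\in\Gamma$.

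The key preparatory step is to verify that $G_\Gamma|\calL\phi|(x)<\infty$, which will justify dominated convergence on the right. Setting $K:=\supp\phi$, the function $\calL\phi$ is bounded on $\Rd$ and satisfies the tail bound $|\calL\phi(z)|\le \|\phi\|_\infty |K|\,\nu(\dist(z,K))$ for $z$ outside a neighbourhood of $K$, so it inherits the decay of $\nu$ at infinity. To bound the Green potential I would combine local integrability of $G_\Gamma\le U$ (Lemma~\ref{lem:Ub}) with an Ikeda--Watanabe argument: by \eqref{eq:IW2},
\[
\int_\Gamma G_\Gamma(x,y)\kappa_\Gamma(y)\,dy \;=\; \int_{\overline{\Gamma}^c}P_\Gamma(x,z)\,dz \;\le\; 1,
\]
while the geometry of the Lipschitz cone yields $\kappa_\Gamma(y)\gtrsim |y|^d\nu(|y|)$ deep inside $\Gamma$, since $\Gamma^c$ contains a ball of radius comparable to $|y|$ at distance comparable to $|y|$ from $y$. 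Combined with the upper scaling of $\nu$ from \textbf{A1}, this dominates the tail of $|\calL\phi|$ and closes the integrability.

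Passing to the limit $t\to\infty$: on the left, on $\{\tau_\Gamma<\infty\}$ formula \eqref{e.p1} gives $X_{\tau_\Gamma}\in\overline{\Gamma}^c$ a.s., so $\phi(X_{\tau_\Gamma})=0$ because $\supp\phi\subset\Gamma$; on $\{\tau_\Gamma=\infty\}$ the process must leave every compact set, so $\phi(X_t)\to 0$ a.s., and bounded convergence yields $\E_x\phi(X_{\tau_\Gamma\wedge t})\to 0$. On the right, dominated convergence with the majorant from the previous step gives $\E_x\int_0^{\tau_\Gamma}\calL\phi(X_s)\,ds = G_\Gamma\calL\phi(x)$. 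Combining, $-\phi(x) = G_\Gamma\calL\phi(x)$, as required. The main obstacle is the integrability argument: under \textbf{A1} alone (without the scaling \textbf{A2} or integrability \textbf{A3}), the process is not a priori known to be transient, so one must exploit both the cone geometry and the sharp potential bound of Lemma~\ref{lem:Ub} (including its one-dimensional variant) to control $G_\Gamma|\calL\phi|$ on the tail, and similar care is needed to justify $\phi(X_t)\to 0$ on $\{\tau_\Gamma=\infty\}$ outside the transient regime.
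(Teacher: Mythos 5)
Your proof is correct in substance but takes a genuinely different route from the paper. The paper applies Dynkin's formula on the \emph{bounded} truncated cone $\Gamma_r$ (so $\E_x\tau_{\Gamma_r}<\infty$ automatically, and $X_{\tau_{\Gamma_r}}$ never enters $\supp\phi$ since $r$ is large), then lets $r\to\infty$; the only issue is then convergence of $\int_{\Gamma_r}G_{\Gamma_r}(x,y)\calL\phi(y)\,dy$, which the paper handles by splitting at a fixed radius $R$, using the boundary Harnack inequality (Proposition~\ref{prop:BHP}) and identity \eqref{eq:G_harm} on the inner piece, and the pointwise bound $G_{\Gamma_r}(x,y)|\calL\phi(y)|\lesssim|y|^{-2d}$ from Lemma~\ref{lem:Ub} on the outer piece. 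You instead truncate in \emph{time} at $\tau_\Gamma\wedge t$ and let $t\to\infty$. Your integrability argument via Ikeda--Watanabe,
\[
\int_\Gamma G_\Gamma(x,y)\kappa_\Gamma(y)\,dy=\P_x\bigl(X_{\tau_\Gamma}\in\overline\Gamma^{\,c}\bigr)\le 1,\qquad \kappa_\Gamma(y)\gtrsim|y|^d\nu(|y|),
\]
is cleaner than the paper's BHP argument and needs only \textbf{A1} and the cone geometry (the local part follows because $\kappa_\Gamma$ is bounded below on $\Gamma\cap B_R$, since $\nu>0$ and $\Gamma^c\cap(B_{3R}\setminus B_{2R})$ has positive measure); Lemma~\ref{lem:Ub} is in fact not needed at this step. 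What you pay for is the left-hand side limit $\E_x\phi(X_{\tau_\Gamma\wedge t})\to 0$ on $\{\tau_\Gamma=\infty\}$, which the paper's space-truncation avoids entirely. You flag this as an obstacle, but it is resolvable under \textbf{A1}: for $d\ge 2$ the process is transient (as asserted at the start of the proof of Lemma~\ref{lem:Ub}), so $|X_t|\to\infty$ a.s.\ and $\phi(X_t)\to 0$; for $d=1$ the process is symmetric, hence oscillating, so $\tau_{(0,\infty)}<\infty$ a.s.\ and $\{\tau_\Gamma=\infty\}$ is null. With those two observations spelled out, your argument is complete and, if anything, a touch more elementary than the paper's.
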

\begin{proof}
  Since  $\nu$ is symmetric, for every $y \in \Rd$ we have
	\[
	\big|\calL \phi(y) \big| \leq \norm{\phi}_{C^2(\Rd)} \int_{\Rd} (1 \wedge |z|^2)\nu(z)\,dz < \infty.
      \]
      
	Fix $x \in \Rd$. Choose $R_1$ so that $\supp \phi \subset B_{R_1}$ and set $R=2R_1+|x|$. 
      By Dynkin's formula \cite[(5.8)]{Dynkin65}, for $r>2R$ we have
	\[
	-\phi(x) = \E_x \int_0^{\tau_{\Gamma_r}}\calL \phi(X_t)\, dt = \int_0^{\infty} \E_x \big[\tau_{\Gamma_r}>t;\calL \phi(X_t)\big]\,dt = \int_{\Gamma_r} G_{\Gamma_r}(x,y)\calL \phi(y)\,dy.
	\]
	The application of the Fubini theorem is justified by the facts that $\calL\phi$ is bounded on $\Rd$ and $\E_x \tau_{\Gamma_r}<\infty$ (see, e.g.,  \cite{KBTGMR-ptrf}). We split the integral as follows:
	\begin{align*}
		-\phi(x) &= \int_{\Gamma_R} G_{\Gamma_r}(x,y)\calL \phi(y)\,dy + \int_{\Gamma_r \setminus \Gamma_R} G_{\Gamma_r}(x,y)\calL \phi(y)\,dy \\ &=: I_1(r) + I_2(r).
	\end{align*}
        By Proposition \ref{prop:BHP}, for $y \in \Gamma_R$ and $v \in \Gamma \setminus \Gamma_{2R}$ and some fixed $y_1 \in \Gamma_R$ and $y_2 \in \Gamma \setminus \Gamma_{2R}$,
	\[
	\frac{G_{\Gamma}(v,y)}{G_{\Gamma}(y,y_2)} \leq \cbhi \frac{G_{\Gamma}(v,y_1)}{G_{\Gamma}(y_1,y_2)}.
      \]
      This and \eqref{eq:G_harm} imply that
      \begin{equation}
        \label{eq:30}
        \begin{aligned}
          G_{\Gamma}(x,y)
          &= G_{\Gamma_{2R}}(x,y) + \E_x G_{\Gamma} \big( X_{\tau_{\Gamma_{2R}}},y \big) \\
          &\leq G_{\Gamma_{2R}}(x,y) + c \E_x G_{\Gamma} \big( X_{\tau_{\Gamma_{2R}}},y_1 \big) \cdot \frac{G_{\Gamma}(y,y_2)}{G_{\Gamma}(y_1,y_2)} \\
          &\leq G_{\Gamma_{2R}}(x,y) + c G_{\Gamma}(x,y_1) \cdot \frac{G_{\Gamma}(y,y_2)}{G_{\Gamma}(y_1,y_2)}.
        \end{aligned}
      \end{equation}
      Since $G_{\Gamma}(y, y_2)$ is regular harmonic on $\Gamma_{2R}$ and vanishes on $\Gamma^c$, it is bounded on $\Gamma_R$ by Proposition \ref{prop:BHP}. Therefore, by the boundedness of $\calL \phi$, \eqref{eq:30} and the dominated convergence theorem,
      \begin{equation}
        \label{eq:I1r}
        \lim_{r \to \infty} I_1(r) = \int_{\Gamma_R} G_{\Gamma}(x, y) \calL \phi(y) dy. 
      \end{equation}
 Note that by Proposition \ref{prop:K}, for $y \in \Gamma_R^c$,
   \begin{equation}\label{eq:47}
   	|\calL \phi(y)|\leq \int_{B_{R_1}}|\phi(z)|\nu(y-z)dz  \leq ||\phi||_\infty |B_{R_1}|\nu(|y|-R_1)\lesssim\nu(|y|/2) \lesssim \frac{K(|y|)}{|y|^d}  .
   \end{equation}  
   This and Lemma \ref{lem:Ub} {yield}
\[G_{\Gamma_r}(x,y)|\calL \phi(y)|\lesssim |y|^{-2d}.\]
 Now, the dominated convergence theorem implies that
      \begin{equation}
        \label{eq:I2r}
        \lim_{r \to \infty} I_2(r) =
 \int_{\Gamma \setminus \Gamma_R} G_{\Gamma}(x,y)\calL \phi(y)\,dy. 
      \end{equation}
      Combining \eqref{eq:I1r} and \eqref{eq:I2r} we complete the proof.

    \end{proof}

\section{Heat kernel estimates}\label{s.hke}
We apply to $\Gamma$ standard geometric considerations on $\kappa$-sets, {see} Bogdan, Grzywny and Ryznar \cite[Definition 2]{KBTGMR10} and Chen, Kim and Song \cite[Figure 1]{CKS14}. Namely, for $x \in \Gamma$ and $r>0$ we let  
\[
U^{x,r}=B(x,|x-A_r(x)|+\kappa r/3) \cap \Gamma, \qquad B^{x,r}_1 = B(A_r(x),\kappa r/3),
\]
so that $B_1^{x,r} \subset U^{x,r}$. There is also $A'_r(x)$ and $B^{x,r}_2 = B(A'_r(x),\kappa r/6)$ so that $B(A'_r(x),\kappa r/3) \subset B(A_r(x), \kappa r) \setminus U^{x,r}$ and consequently $\dist(U^{x,r},B_2^{x,r}) \geq \kappa r/6$, see Figure \ref{fig:1}.
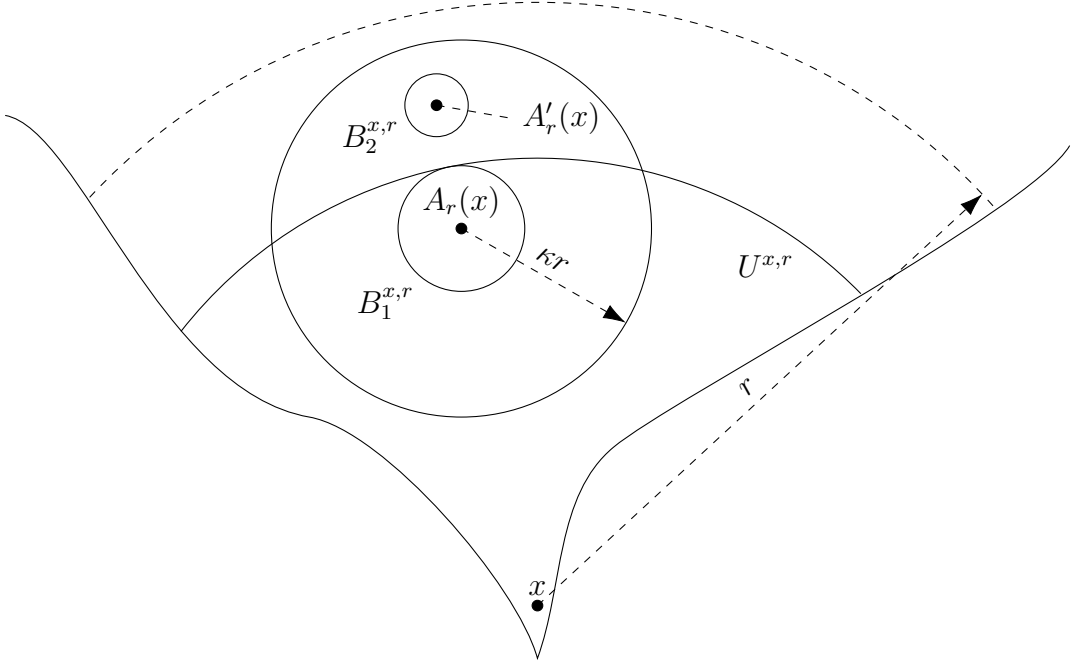
\begin{figure}[H]
	\centering
	\begin{tikzpicture}
		\draw[name path=set] (0,0) .. controls +(350:1) and +(170:2) .. (4,-4)
		.. controls +(350:1) and +(105:1).. ++ (3,-3.2)
		.. controls +(70:1) and +(220:1) .. ++(1,2.8)
		.. controls +(40:1) and +(240:1) .. ++(6,4);
		\filldraw (7,-6.5) circle (2pt) node[above] {$x$} coordinate (X);
		\draw[dashed, name path=arc] ([shift=(41.5:8)]X) arc (41.5:137.5:8);
		\draw[postaction={decorate, decoration={text along path, text={$r${}}, text align={center}, raise = 4pt}}, dashed, -{Latex[length=3mm,width=2mm]}] (X) -- +(43:8cm);
		\filldraw (6,-1.5) circle (2pt) node[above] {$A_r(x)$} coordinate (A);
		\draw (A) circle (2.5);
		\draw[postaction={decorate, decoration={text along path, text={${\kappa}r${}}, text align={center}, raise = 4pt}}, dashed, -{Latex[length=3mm,width=2mm]}] (A) -- +(330:2.5cm);
		\draw (A) circle (2.5/3); 
		\draw ([shift=(44.2:5.1+2.5/3)]X) arc (44.2:142.2:5.1+2.5/3);
		\filldraw (5.673,0.135) circle (2pt) node[above] {} coordinate (A1);
		\draw (A1) circle (2.5/6);
		\draw[dashed] (A1) -- +(350:1cm) node[right] {$A_r'(x)$};
		\node at (10,-2) {${U^{x,r}}$};
		\node at (5,-2.5) {${B_1^{x,r}}$};
		\node at (4.8,-0.3) {${B_2^{x,r}}$};
	\end{tikzpicture}
	\caption{Geometric setting for $\kappa$-fat sets}
	\label{fig:1}
\end{figure}

We next focus on sharp estimates of the heat kernels {$p_t$  and} $p_t^{\Gamma}$.
\begin{proposition}\label{prop:hk_est}
	Assume {\bf A}. There exists $T_1>0$ such that
	\[
	p_t(x) \approx p_t(0) \wedge t\nu(x) \approx \big(\psi^{-1}(1/t)\big)^d \wedge t\nu(x), \quad x \in \Rd,\ t \geq T_1.
	\]
\end{proposition}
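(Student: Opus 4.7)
I would first fix the value at the origin, then prove matching upper and lower bounds; $T_1$ is chosen at the end to be as large as the asymptotic arguments require. For the origin, $p_t(0)=(2\pi)^{-d}\int_{\Rd}e^{-t\psi(\xi)}\,d\xi$ is finite for $t\ge t_0$ by {\bf A3}. Substituting $\xi=\psi^{-1}(1/t)\eta$ and using that {\bf A2} together with \cite{WCTGBT} gives $\psi\in\calR_0^{\alpha}$ (so by Proposition \ref{prop:h_psi}, $\psi^{-1}\in\calR_0^{1/\alpha}$), the integrand $e^{-t\psi(\psi^{-1}(1/t)\eta)}$ tends pointwise to $e^{-|\eta|^{\alpha}}$ as $t\to\infty$. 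Potter's bounds supply an integrable envelope near $\eta=0$, and for $|\eta|$ bounded below we dominate by $e^{-t_0\psi(\xi)}$ from {\bf A3} after unsubstituting. Dominated convergence yields $p_t(0)\sim c_{d,\alpha}(\psi^{-1}(1/t))^d$ as $t\to\infty$; in particular $p_t(0)\approx(\psi^{-1}(1/t))^d$ for $t\ge T_1$, which is the second ``$\approx$'' in the proposition.

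\textbf{Upper and lower bounds.} Isotropic unimodality gives $p_t(x)\le p_t(0)$, while the uniform-in-$t$ bound $p_t(x)\lesssim t\nu(x)$ follows from Grzywny \cite[Theorem 3]{TG14}, whose weak upper scaling hypothesis on $\psi^{*}$ is supplied by {\bf A1} through \eqref{eq:52}, \eqref{eq:64} and Proposition \ref{prop:K}. For the matching lower bound, set $r_t:=1/\psi^{-1}(1/t)$ (which tends to $\infty$) and split into two regimes. In the \emph{bulk} $|x|\le r_t/2$, Fourier inversion gives
\[
|p_t(x)-p_t(0)|\le \tfrac12|x|^2(2\pi)^{-d}\int_{\Rd}|\xi|^2 e^{-t\psi(\xi)}\,d\xi,
\]
and the same change of variables as in Step~1 bounds the right-hand side by $\lesssim(|x|/r_t)^2(\psi^{-1}(1/t))^d\le\tfrac12 p_t(0)$ when $|x|\le r_t/2$, so $p_t(x)\gtrsim p_t(0)$. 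In the \emph{tail} $|x|\ge r_t/2$, Chapman--Kolmogorov combined with the bulk bound gives
\[
p_t(x)\ge \int_{B(0,r_t/4)}p_{t/2}(y)p_{t/2}(x-y)\,dy \gtrsim r_t^{-d}\int_{B(x,r_t/4)}p_{t/2}(z)\,dz,
\]
and one controls the last integral from below through the Ikeda--Watanabe formula \eqref{eq:IW}, interpreted as a single-big-jump lower bound of order $t\nu(x)\,r_t^d$; the comparability of $\nu(z)$ with $\nu(x)$ on $B(x,r_t/4)\subset\{|z|\asymp|x|\}$ is supplied by {\bf A2} once $|x|\gtrsim r_t$ is large.

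\textbf{Main obstacle.} The difficult part is the uniformity of constants in the lower-bound step. Absent a global lower scaling of $\psi$, classical lower heat-kernel theorems do not apply directly, so the bulk step must rely on uniform Potter control of $t\psi(\psi^{-1}(1/t)\eta)$ and of $\int|\xi|^2 e^{-t\psi(\xi)}\,d\xi$ (dominating by $e^{-t_0\psi}$ from {\bf A3} once $t\ge 2t_0$), while the tail step needs Potter comparability of $\nu$ across the ball $B(x,r_t/4)$ at scale $|x|\gtrsim r_t\to\infty$. Choosing $T_1$ large enough forces both $r_t$ and the comparison ratios $|x|/r_t$ into the asymptotic regime covered by {\bf A2}, which is precisely what makes these uniformities quantitative and closes the argument.
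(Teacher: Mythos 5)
Your overall route is genuinely different from the paper's: you attack $p_t(0)$ and the bulk lower bound directly by Fourier inversion, and then build the off-diagonal lower bound by splicing Chapman--Kolmogorov with a single-big-jump estimate, whereas the paper obtains the entire lower bound in two short steps by citing the ready-made inequality $p_t(x)\ge c_1\,t\,\nu(x)\,e^{-c_2 t\psi(1/|x|)}$ from \cite[Proposition~5.3]{TGMRBT} and then using radial monotonicity of $p_t$ to transfer the bound from the boundary shell $\{t\psi(1/|x|)=1\}$ into the bulk $\{t\psi(1/|x|)>1\}$. The upper bound is likewise obtained by citation: \cite[Theorem~5.4]{TGMRBT} together with Proposition~\ref{prop:K} give $p_t(x)\lesssim t\nu(x)$, and condition D3 of \cite[Theorem~3.12]{TGKS19} (verified via \eqref{eq:64}, Potter bounds and Proposition~\ref{prop:h_psi}) gives $p_t(0)\approx(\psi^{-1}(1/t))^d$ for $t\ge T_1$. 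Your approach buys self-containedness, but at the price of redoing in several delicate steps what a single cited inequality supplies.

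The step that actually fails as written is the tail lower bound. The Ikeda--Watanabe formula \eqref{eq:IW} gives the joint law of $(\tau_D, X_{\tau_{D}-}, X_{\tau_D})$; it does not directly produce a lower bound on the marginal $\P_0\big(X_{t/2}\in B(x,r_t/4)\big)$, because after the big jump at time $\tau_D$ the process still has to remain near $x$ until the fixed time $t/2$. To close this you need the strong Markov property at $\tau_D$ \emph{and} a quantitative, uniform-in-radius estimate of the form $\inf_{y\in B(x,r_t/8)}\P_y\big(\sup_{u\le t}|X_u-y|\le r_t/8\big)\gtrsim 1$, which is not stated and is itself a small theorem (it is what goes into \cite[Proposition~5.3]{TGMRBT}). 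A second, smaller soft spot: the bulk inequality $\int|\xi|^2e^{-t\psi(\xi)}\,d\xi\lesssim(\psi^{-1}(1/t))^{d+2}$ requires a separate argument for the outer region $|\xi|>A$, since {\bf A3} gives $e^{-t_0\psi}\in L^1$ but not $|\xi|^2e^{-t_0\psi}\in L^1$; one has to first extract from $e^{-t_0\psi}\in L^1$ and monotonicity of $\psi^*$ the logarithmic lower bound $\psi^*(r)\gtrsim\log r$ before that tail is controlled. Finally, a bookkeeping issue: the bulk estimate guarantees $p_{t/2}(y)\gtrsim p_{t/2}(0)$ only for $|y|\lesssim r_{t/2}=1/\psi^{-1}(2/t)$, and since $\psi^{-1}\in\mathcal{R}^0_{1/\alpha}$ one has $r_{t/2}\approx 2^{-1/\alpha}r_t$, which is \emph{smaller} than $r_t/4$ when $\alpha<1/2$; the radius in the Chapman--Kolmogorov step must be tied to $r_{t/2}$, not $r_t$.
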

\begin{proof}
	Since $X_t$ is symmetric, by \cite[Theorem 5.4]{TGMRBT} and Proposition \ref{prop:K} we get
	\[
	p_t(x)\leq p_t(0)\wedge c(d)t K(|x|)|x|^{-d}\lesssim p_t(0)\wedge t\nu(x), \quad {t>0,\ x \in \Rd}.
	\]
	Next, note that \eqref{eq:64} and the Potter bounds for $\psi$  imply the condition {\bf D3} in \cite[Theorem 3.12]{TGKS19}. This, in view of Proposition \ref{prop:h_psi}, entails the existence of $T_1>0$ such that the claimed upper bound of $p_t(x)$ holds for $t\geq T_1$. 
	
	Furthermore,  \cite[Proposition 5.3]{TGMRBT} gives us
	\begin{equation}\label{eq:t2}p_t(x)\geq c_1 t\nu(x)e^{-c_2 t \psi (1/|x|)}, \quad t>0,\, x\in \Rd.\end{equation}
	By Proposition \ref{prop:h_psi} and \cite[Lemma 2.5]{TGKS19} we have $t\nu(x) \approx \big(\psi^{-1}(1/t)\big)^d$ if $t\psi(1/|x|)= 1$ and $t\geq T_1$. This and the radial monotonicity of $p_t$ imply the lower bound. 
\end{proof}

\begin{proposition}\label{prop:P_tau}
	Assume {\bf A1} and {\bf A2}. For every $T>0$ there is 
	 $c >0$ such that 
	\begin{equation}\label{eq:67}
		\E_z \tau_{B(z,r) \cap \Gamma} \leq c t\P_z(\tau_{\Gamma}>t),\quad z\in \Gamma,\ t\ge T,
	\end{equation}
	where $r=1/\psi^{-1}(1/t)$. Furthermore, 
	\begin{equation}\label{eq:69}
		\P_z (\tau_{\Gamma}>t/2) \approx \P_z (\tau_{\Gamma}>t), \quad z \in \Gamma, \ t\geq T.
	\end{equation}
	
\end{proposition}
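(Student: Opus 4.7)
My plan is to prove \eqref{eq:69} first, as the essential technical ingredient, and then use it together with Markov-property arguments to deduce \eqref{eq:67}. Throughout, the scale $r = 1/\psi^{-1}(1/t)$ is matched to $t$ in the sense that $t\,h(1/r) \approx 1$ by \eqref{eq:52}--\eqref{eq:64}.

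For \eqref{eq:69}, I start from the Chapman--Kolmogorov decomposition
\begin{equation*}
\P_z(\tau_\Gamma > t) = \int_\Gamma p_{t/2}^\Gamma(z, w)\, \P_w(\tau_\Gamma > t/2)\, dw
\end{equation*}
and lower-bound the integrand on a ``bulk'' set $A_\epsilon := \{w \in \Gamma : B(w, \epsilon r) \subset \Gamma\}$ for a small $\epsilon>0$ to be fixed. Standard Pruitt-type bounds give $\P_w(\tau_{B(w, \epsilon r)} \leq t/2) \leq c (t/2)h(1/(\epsilon r))$, and by the regular variation of $\psi$ (assumption \textbf{A2}) together with Proposition \ref{prop:h_psi}, \eqref{eq:52} and \eqref{eq:64}, the product $(t/2)h(1/(\epsilon r)) \approx \epsilon^\alpha/2$ can be made arbitrarily small by taking $\epsilon$ small. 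Hence $\P_w(\tau_\Gamma > t/2) \geq \P_w(\tau_{B(w,\epsilon r)} > t/2) \geq c_1 > 0$ on $A_\epsilon$. The crux is to show $\int_{A_\epsilon} p_{t/2}^\Gamma(z, w)\, dw \geq c_2\, \P_z(\tau_\Gamma > t/2)$, i.e., that the Dirichlet heat kernel does not concentrate in the $\epsilon r$-neighbourhood of $\partial\Gamma$. I would prove this using the Dirichlet heat-kernel estimates of \cite{KBTGMR-dhk}, which supply a boundary-decay factor on $p_{t/2}^\Gamma(z,w)$ in its second argument, combined with a covering of the thin tube $\Gamma\setminus A_\epsilon$ by balls of radius comparable to $\epsilon r$, made possible by the Lipschitz structure of $\Gamma$.

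For \eqref{eq:67}, set $D := B(z,r) \cap \Gamma$ and split $\E_z \tau_D = \int_0^t \P_z(\tau_D > s)\,ds + \int_t^\infty \P_z(\tau_D > s)\,ds$. The tail is handled by iterating the Markov property in blocks of length $t$: since $\sup_{w \in B(z,r)} \P_w(\tau_{B(w,2r)} > t) \leq \theta < 1$ by matched-scale exit-time bounds (using $D \subset B(w,2r)$ for $w \in B(z,r)$), one obtains the geometric decay $\P_z(\tau_D > kt) \leq \theta^{k-1}\P_z(\tau_D > t)$ and hence $\int_t^\infty \P_z(\tau_D > s)\,ds \leq c\,t\,\P_z(\tau_D > t) \leq c\,t\,\P_z(\tau_\Gamma > t)$. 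For the head integral, I use $\P_z(\tau_D > s) \leq \P_z(\tau_\Gamma > s)$ and sum dyadically: iterating \eqref{eq:69} gives $\P_z(\tau_\Gamma > t/2^k) \leq C^k \P_z(\tau_\Gamma > t)$ as long as $t/2^{k-1} \geq T$, and a geometric-series computation on the intervals $[t/2^{k+1}, t/2^k]$ yields $\int_0^t \P_z(\tau_\Gamma > s)\,ds \leq c\,t\,\P_z(\tau_\Gamma > t)$, provided the doubling constant $C$ satisfies $C<2$; this I would enforce by applying the doubling step of \eqref{eq:69} at a ratio strictly finer than $1/2$ if necessary.

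The main obstacle is the mass-concentration inequality in the proof of \eqref{eq:69}: controlling the Dirichlet heat kernel $p_{t/2}^\Gamma(z,\cdot)$ uniformly for the entire class of processes satisfying~\textbf{A}, rather than just the $\alpha$-stable case, requires careful use of the heat-kernel and boundary Harnack framework. A secondary technicality is calibrating the dyadic iteration in \eqref{eq:67} so that the effective doubling constant is strictly less than $2$, which is what makes the geometric-series bound converge.
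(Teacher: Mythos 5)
Your proposal inverts the logical order the paper actually uses, and this reversal is not cosmetic: it introduces two gaps that the paper's order is specifically designed to avoid.

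The paper proves \eqref{eq:67} first, by an elementary and self-contained argument, and then deduces \eqref{eq:69} from it in a few lines. Concretely, with $D=B(z,r)\cap\Gamma$ and $A=A_{3r/\kappa}(z)$: if $|z-A|\le r$ then $D=B(z,r)$, Pruitt gives $\E_z\tau_D\lesssim t$ and \cite[Proposition 5.3]{TGMRBT} gives $\P_z(\tau_D>t)\gtrsim 1$, so \eqref{eq:67} is trivial. If $|z-A|>r$, the Ikeda--Watanabe formula \eqref{eq:IW2} applied to a ball $V=B(A',r)\subset\Gamma$ sitting next to $D$ yields $\E_z\tau_D\lesssim t\,\P_z(X_{\tau_D}\in V)$, and then the strong Markov property at $\tau_D$ together with the ball-survival lower bound gives $\P_z(X_{\tau_D}\in V)\lesssim\P_z(\tau_\Gamma>t)$. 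Having \eqref{eq:67}, the doubling \eqref{eq:69} follows immediately from the Markov inequality and the exit-distribution bound \eqref{eq:70}: $\P_z(\tau_\Gamma>t/2)\le\P_z(\tau_D>t/2)+\P_z(X_{\tau_D}\in\Gamma)\lesssim t^{-1}\E_z\tau_D\lesssim\P_z(\tau_\Gamma>t)$.

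Your route hits two genuine obstructions. First, to prove \eqref{eq:69} directly you appeal to Dirichlet heat-kernel estimates to control the boundary concentration of $p_{t/2}^\Gamma(z,\cdot)$. But the factorized Dirichlet heat-kernel bound \emph{for this cone and this family of processes} is Lemma~\ref{lem:dhk_est}, whose proof depends on the very proposition you are proving (it uses \eqref{eq:67} and \eqref{eq:69} through Propositions~\ref{prop:dhk_int1} and \ref{prop:dhk_int3}). Citing \cite{KBTGMR-dhk} does not close this circle: for an unbounded cone, any boundary-decay factor is expressed through $\P_y(\tau_\Gamma>t)$, which is exactly the unknown quantity here. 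So the ``crux'' step you flag is not merely a technicality to be filled in --- it is circular at this point in the development.

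Second, your deduction of \eqref{eq:67} from \eqref{eq:69} does not work near $\partial\Gamma$. The dyadic bound on $\int_0^t\P_z(\tau_\Gamma>s)\,ds$ needs the implicit doubling constant $C$ in \eqref{eq:69} to be strictly less than $2$, but \eqref{eq:69} gives no control on $C$, and ``applying the doubling step at a finer ratio'' does not help: replacing $1/2$ by $\lambda$ changes the requirement to $C_\lambda<1/\lambda$, with $C_\lambda$ just as uncontrolled. Moreover, the iteration can only descend to $s\approx T$; the residual piece $\int_0^{T}\P_z(\tau_\Gamma>s)\,ds$ is bounded only by the constant $T$, which is \emph{not} $\lesssim t\,\P_z(\tau_\Gamma>t)$ when $z$ is close to $\partial\Gamma$. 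This is precisely the regime the Ikeda--Watanabe/landing-ball argument in the paper is engineered to handle, and it is the step your decomposition loses.
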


\begin{proof}
	Let 
	$r=1/\psi^{-1}(1/t)$. Set $D = B(z,r) \cap \Gamma$ and $A = A_{3r/\kappa}(z)$. 
	First let us consider the case $|z-A| \leq r$. {Then} $D=B(z,r)$, and Pruitt's estimates \cite{Pruitt81} and \eqref{eq:52} {yield},
	\begin{equation}
		\E_z \tau_D  \leq \frac{c}{h(r)}\leq 2c \, t, \quad t >0,
	\end{equation}
	where $c$ depends only on $d$.
 Furthermore, by  \cite[Proposition 5.3]{TGMRBT}
\begin{equation}\label{eq:t1}
 \P_z (\tau_{D}> t)\geq c_1 e^{-c_2 t\psi(1/r)}=c(d),\quad t>0.
\end{equation}
Since $\P_z (\tau_{D}> t) \leq \P_z (\tau_{\Gamma}> t)$, we get
\eqref{eq:67}  in this case.
	
	Now suppose that $|z-A| >r$. Let $V \in \overline{D}^c$. By the Ikeda-Watanabe formula,
	\begin{equation}\label{eq:66}
		\P_x(X_{\tau_{D}}\in V)=\int_{D} G_{D}(x,w)\nu(V-w)dw\geq \inf_{w\in D}\nu(V-w) \E_x\tau_{D}.
	\end{equation}
	The condition $|z-A|>r$ allows for $A' \in B(A,3r)$ such that $B(A',2r) \subset B(A,3r) \setminus D$. Hence, for $V=B(A',r)$ we get, with the aid of Proposition \ref{prop:K} and {\bf A1},
	\[
	\inf_{w\in D}\nu(V-w)\geq \nu(r(2+3/\kappa))|V|\approx K(r)\approx t^{-1},
	\]
	the last comparability resulting from Proposition \ref{prop:K} and \eqref{eq:52}. 
	Thus, by \eqref{eq:66} and \eqref{eq:t1}, 
	\begin{equation}\label{eq:t3}
	\E_z\tau_{D}\lesssim t \P_z(X_{\tau_{D}}\in V) \lesssim t \E_z\left[X_{\tau_{D}}\in V;\P_{X_{\tau_{D}}}(\tau_{B(X_{\tau_{D}},r)}>t)\right]\leq t \P_z(\tau_\Gamma>t),
	\end{equation}
	so \eqref{eq:67} is proved.
	
	We now turn our attention to the proof of \eqref{eq:69}. In the first case $|z-A| \leq r$, by  \eqref{eq:t1},
	\[
	1 \geq \P_z (\tau_{\Gamma}>t/2) \geq \P_z (\tau_{\Gamma}>t) \geq \frac12.
	\]
	Next note that for every $z \in \Gamma$ we have from \cite[Lemma 2.1]{KBTGMR-ptrf}  and \eqref{eq:52},
	\begin{equation}\label{eq:70}
		\P_z(X_{\tau_{D}}\in \Gamma)\leq \P_z\big(\big|X_{\tau_{D}}-z\big|\geq r\big)\leq 24 h(r)\E_z\tau_{D}\leq C(d) t^{-1} \E_z\tau_{D}, \quad t >0.
	\end{equation}
	Therefore, in the case $|z-A|>r$, by Markov inequality and \eqref{eq:70},
	\[
	\P_z(\tau_\Gamma>t/2)\leq\P_z(\tau_{D}>t/2)+\P_z(X_{\tau_{D}}\in \Gamma)\leq c t^{-1}\E_z\tau_{D}.
	\]
	The application of \eqref{eq:67} yields \eqref{eq:69}.
\end{proof}

The next proposition provides lower estimates on the Dirichlet heat kernel.
\begin{proposition}\label{prop:dhk_int1}
	Assume {\bf A1} and {\bf A2}. For every $T>0$ there is $b \geq 1$ and $c>0$ such that for all $x,y \in \Gamma$ and $t\geq T$ satisfying $\delta_{\Gamma}(x) \wedge \delta_{\Gamma}(y) \geq b/\psi^{-1}(1/t)$,
	\[
	p_t^{\Gamma}(x,y) \geq c \big(\psi^{-1}(1/t)\big)^d \quad \mbox{if}\quad |x-y|\leq  b/\psi^{-1}(1/t),
	\]
	 and 
	 \begin{equation}\label{eq:dhk_int2}
	p_t^{\Gamma}(x,y) \geq c t\nu(x-y) \quad \mbox{if}\quad |x-y|\geq  b/\psi^{-1}(1/t).	
	\end{equation}
\end{proposition}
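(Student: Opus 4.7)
Set $r := 1/\psi^{-1}(1/t)$; the constant $b \geq 1$ will be fixed sufficiently large during the argument. My plan is to first prove the off-diagonal bound \eqref{eq:dhk_int2} via a single-jump argument using the Ikeda-Watanabe formula, and then bootstrap to the near-diagonal bound by chaining via Chapman-Kolmogorov through an auxiliary interior point of $\Gamma$.

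\textbf{Off-diagonal case} $(|x-y| \geq br)$. Put $D_x := B(x, br/4)$ and $D_y := B(y, br/4)$. By the hypothesis $\delta_{\Gamma}(x) \wedge \delta_{\Gamma}(y) \geq br$ both balls lie in $\Gamma$, and since $|x-y| \geq br$ they are disjoint with $|v-z| \approx |x-y|$ for $v \in D_x$, $z \in D_y$. Restricting \eqref{eq:IW} applied to the exit from $D_x$ (with $p_u^{D_x}$ in place of $p_u^{\Gamma}$) to exit times in $[t/3, 2t/3]$ and exit locations in $D_y$ gives
\begin{equation*}
p_t^{\Gamma}(x,y) \geq \int_{t/3}^{2t/3}\!\!du \int_{D_x}\!\!dv \int_{D_y}\!\!dz\; p_u^{D_x}(x,v)\, \nu(v-z)\, p_{t-u}^{\Gamma}(z,y).
\end{equation*}
By \textbf{A1} and monotonicity of $\nu$, $\nu(v-z) \approx \nu(x-y)$ on this range; factoring out and using $\int_{D_x} p_u^{D_x}(x,v)\,dv = \P_x(\tau_{D_x} > u)$ together with $\int_{D_y} p_{t-u}^{\Gamma}(z,y)\,dz \geq \P_y(\tau_{D_y} > t-u)$, a Pruitt-type estimate as in \eqref{eq:t1} (since $t\psi(4/(br)) \lesssim b^{-\alpha}$ by \textbf{A2}, both survival probabilities are uniformly $\geq c(b) > 0$ for $u \in [t/3, 2t/3]$) yields $p_t^{\Gamma}(x,y) \gtrsim t\nu(x-y)$, which is \eqref{eq:dhk_int2}.

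\textbf{Near-diagonal case} $(|x-y| \leq br)$. I exploit the $\kappa$-fatness of $\Gamma$ to pick an interior approximant $z_0 := A_{Cbr}(x)$ with constant $C = C(\kappa, \alpha)$ large enough that $\delta_\Gamma(z_0) \geq \kappa Cbr \geq 2br$, $|x-z_0| \leq Cbr$, and (using $|x-y| \leq br$) also $|y-z_0| \approx Cbr$. For the ball $B_0 := B(z_0, br)$ and any $w \in B_0$, one then has $\delta_\Gamma(w) \geq br$ and $|x-w|, |y-w| \approx Cbr$, which comfortably exceeds the off-diagonal threshold $b/\psi^{-1}(2/t) \approx br$ (comparability by \textbf{A2}). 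Applying \eqref{eq:dhk_int2} at time $t/2$ to both $p_{t/2}^{\Gamma}(x,w)$ and $p_{t/2}^{\Gamma}(w,y)$ gives
\begin{equation*}
p_{t/2}^{\Gamma}(x,w)\cdot p_{t/2}^{\Gamma}(w,y) \gtrsim \bigl(t\nu(Cbr)\bigr)^2 \approx (\psi^{-1}(1/t))^{2d},
\end{equation*}
the last step using \textbf{A2} and Proposition~\ref{prop:K}. Chapman-Kolmogorov and $|B_0| \approx r^d$ then deliver
\begin{equation*}
p_t^{\Gamma}(x,y) \geq \int_{B_0} p_{t/2}^{\Gamma}(x,w)\, p_{t/2}^{\Gamma}(w,y)\,dw \gtrsim r^d\cdot (\psi^{-1}(1/t))^{2d} \approx (\psi^{-1}(1/t))^d.
\end{equation*}

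\textbf{Main obstacle.} The principal technical difficulty is the consistent calibration of $b$ (and the auxiliary $C$) large enough to simultaneously make (i)~Pruitt's estimate yield uniform positive survival probabilities on each $B(\cdot, br/4)$, (ii)~the interior approximant $z_0$ sit deep enough inside $\Gamma$ (requiring $C \gtrsim 1/\kappa$), and (iii)~the distances $|x-w|, |y-w|$ for $w \in B_0$ exceed the off-diagonal threshold $b/\psi^{-1}(2/t)$ at the rescaled time $t/2$. All three can be reconciled by taking $b$ (and $C$) large in terms of $\kappa, \alpha, \beta, d$ and the lower bound $T$ on~$t$.
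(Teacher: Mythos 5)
Your off-diagonal argument is sound and is essentially the same one the paper uses: the paper cites \cite[Lemma~1.10]{KBTGMR-dhk}, which is precisely the one-jump decomposition you unpack through the Ikeda--Watanabe formula. The near-diagonal argument, however, takes a genuinely different route from the paper — the paper applies the Hunt formula directly, bounds $p_t(y-x)$ from below via \eqref{eq:t2} and unimodality, and shows the correction term $\E_x[\tau_\Gamma<t;\,p_{t-\tau_\Gamma}(y-X_{\tau_\Gamma})]$ is a small fraction of it by using $|y-X_{\tau_\Gamma}|\geq\delta_\Gamma(y)\geq br$ together with $\P_0(\tau_{B_{br}}\leq t)\lesssim b^{-3\alpha/4}$ from \eqref{eq:68}. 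Unfortunately your near-diagonal route has a gap.

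The issue is the choice $z_0:=A_{Cbr}(x)$. By the paper's definition of the $\kappa$-fat point, $A_r(Q)$ is only required to satisfy $B(A_r(Q),\kappa r)\subset\Gamma\cap B(Q,r)$; it is \emph{not} required to be at distance comparable to $r$ from $Q$, and the paper explicitly notes $A_r(Q)$ is not uniquely determined. In particular, when $x$ lies deep inside $\Gamma$ (say $\delta_\Gamma(x)\gg Cbr$), a perfectly admissible choice is $A_{Cbr}(x)=x$ itself. In that case $|x-z_0|=0$, and for $w\in B_0=B(z_0,br)$ the distances $|x-w|$ and $|y-w|$ need not be comparable to $Cbr$; they can drop well below the off-diagonal threshold $b/\psi^{-1}(2/t)\approx br$. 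So \eqref{eq:dhk_int2} at time $t/2$ does not apply to $p_{t/2}^\Gamma(x,w)$ for those $w$, and the chain breaks. Your claim ``$|y-z_0|\approx Cbr$'' silently presumes $|x-z_0|\approx Cbr$, which is exactly what is missing. To repair this you would either have to construct $z_0$ explicitly with $|x-z_0|\approx Cbr$ (plausible for Lipschitz cones, but requires justification beyond $\kappa$-fatness), or treat the deep-interior case separately — and the natural separate treatment would need a near-diagonal lower bound on $p_{t/2}^\Gamma$, which is circular. The paper's Hunt-formula argument sidesteps the whole geometric issue, which is what that approach buys.
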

\begin{proof}
	Set $r=1/\psi^{-1}(1/t)$. First we consider $|x-y|\leq br$. The Hunt formula \eqref{eq:Hunt} is
	\[
	p_t^{\Gamma}(x,y) = p_t(y-x) - \E_x [\tau_{\Gamma}<t;p_{t-\tau_{\Gamma}}(y-X_{\tau_{\Gamma}})], \quad x,y \in \Rd.
	\]
	By the radial monotonicity of $p_t$ and \eqref{eq:t2}, for every $t >0$,
	\begin{equation}\label{eq:31}
	p_t(y-x) \geq p_t(b r) \geq c_1 t \nu(b r)e^{-c_2t\psi(\psi^{-1}(1/t)/b)}\gtrsim t \nu(b r), 
	\end{equation}
	with the implied constant depending only on $d$. 
	 Furthermore, by \cite[Theorem 5.4]{TGMRBT}, Proposition \ref{prop:K} and monotonicity of $\nu$,
	\begin{align*}
		\E_x \left[\tau_{\Gamma}<t;p_{t-\tau_{\Gamma}}\big(y-X_{\tau_{\Gamma}}\big)\right] &\lesssim \E_x \left[\tau_{\Gamma}<t;(t-\tau_{\Gamma})\nu\big(y-X_{\tau_{\Gamma}}\big)\right] \\
		 &\leq t\nu\big( \delta_{\Gamma}(y) \big)\P_x(\tau_{\Gamma}<t) \\ &\leq t \nu(b r)  \P_x \big( \tau_{B(x,\delta_{\Gamma}(x))}<t \big)\\
		&\leq  t \nu(b r) \P_x \big( \tau_{B(x, b r)}<t \big).
	\end{align*}
	
	By Potter's bounds for $\psi$, there exists $w>0$ such that 
$$\psi(u)\leq 2 \left(\frac{u}{s}\right)^{3\alpha/4}\psi(s), \quad u\leq s\leq w.$$ 
Set $\lambda=\min\left\{1,w/\psi^{-1}(1/T)\right\}$. The above inequality, Pruitt's estimates \cite{Pruitt81} and  \eqref{eq:h_doubling} together with \eqref{eq:52} imply, for $t \geq T$, 
	\begin{align}\label{eq:68}\begin{aligned}
		\P_0 (\tau_{B_{br}}\leq t)& \leq c_1\,th(br) \leq c_2 t \lambda^{-2}  \psi^* \big(\lambda\psi^{-1}(1/t)/b\big)\\ 
& \leq  2 c_2 t \lambda^{-2}   b^{-3\alpha/4}  \psi^* \big(\lambda \psi^{-1}(1/t)\big)\leq 2 c_2  \lambda^{-2}   b^{-3\alpha/4},\end{aligned}
	\end{align}
where $c_2$ depends only on $d$.
	Thus, by fixing $b$ large enough and putting together \eqref{eq:31} with \eqref{eq:68} we conclude that for $t\geq T$,
	\[
	p_t^{\Gamma}(x,y) \gtrsim t \nu(br)\geq M^{-1} t b^{-d-\beta}\nu(r). 
	\] 
	By  Proposition \ref{prop:K} and \eqref{eq:52}
	we get the first claim.
	
	Now assume that $|x-y|\geq br$.
	By \cite[Lemma 1.10]{KBTGMR-dhk} with $D_1 = B(x,br/2)$ and $D_3 = B(y,br/2)$ we have
	\[
	p_t^{\Gamma}(x,y) \geq t\P_x(\tau_{D_1}>t)\P_y (\tau_{D_3}>t)\inf_{u \in D_1,z \in D_3}\nu(u-z).
	\]
	Observe that by \eqref{eq:h_doubling} and \eqref{eq:52},  
	\[
	th\big(b r/2\big) \approx th\big(r\big)\approx t\psi^* \big(\psi^{-1}(1/t)\lambda\big) \approx 1,
	\]
	with comparability constants depending only on $d$ and $T$.  Thus, in view of \cite[Proposition 5.3]{TGMRBT}, with constant depending only on the dimension {we have the comparison:}
	\[
	\P_x (\tau_{D_1}>t) \approx 1.
	\]
	Similarly, $\P_y(\tau_{D_3}>t) \approx 1$. Moreover, for $u \in D_1$ and $z \in D_3$ we clearly have $|u-z| \leq |x-y|+br \leq 2|x-y|$. From the monotonicity of the L\'{e}vy density and {\bf A1},
	\[
	\inf_{u \in D_1,z \in D_2}\nu(u-z) \geq \nu(2|y-x|) \gtrsim \nu(|y-x|),
	\]
	with comparability constant depending only on $d$, $M$ and $\beta$. The proof is complete.
\end{proof}

\begin{proposition}\label{prop:dhk_int3}
	Assume {\bf A1} and {\bf A2}. For each $T>0$ there is  $c>0$ such that
	\[
	\P_x(\tau_{\Gamma \cap B(x,r)}>t) \geq c\P_x(\tau_{\Gamma}>t), \quad {x \in \Gamma}, \ t \geq T,
	\]
	where $r=1/\psi^{-1}(1/t)$.
\end{proposition}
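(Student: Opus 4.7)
Set $D := \Gamma\cap B(x,r)$ with $r=1/\psi^{-1}(1/t)$. The strategy is to split survival in~$\Gamma$ at the exit time $\tau_D$ and absorb the resulting remainder using the estimates of Proposition~\ref{prop:P_tau}. The strong Markov property at $\tau_D$ gives
\[
\P_x(\tau_\Gamma > t) \;=\; \P_x(\tau_D > t) \;+\; \E_x\bigl[\tau_D\le t,\,X_{\tau_D}\in\Gamma;\,\P_{X_{\tau_D}}(\tau_\Gamma > t-\tau_D)\bigr],
\]
so the task is to bound the second (remainder) summand by $(1-c)\,\P_x(\tau_\Gamma>t)$ for some $c\in(0,1)$ depending only on $d$ and $T$.

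On $\{X_{\tau_D}\in\Gamma\}$ the process has jumped out of $B(x,r)$ while remaining in $\Gamma$, so $|X_{\tau_D}-x|\ge r$. Estimating the conditional survival probability by $1$ and chaining the Pruitt-type bound~\eqref{eq:70} with~\eqref{eq:67} yields
\[
\mathrm{remainder} \;\le\; \P_x\bigl(|X_{\tau_D}-x|\ge r\bigr) \;\le\; 24\,h(r)\,\E_x\tau_D \;\le\; 24c\,(t\,h(r))\,\P_x(\tau_\Gamma>t),
\]
where $c$ is the constant from~\eqref{eq:67}. By $\psi(\psi^{-1}(1/t))=1/t$ combined with~\eqref{eq:52} and~\eqref{eq:64}, the factor $t\,h(r)$ is bounded by a constant depending only on~$d$, producing an overall coefficient $C_0=C_0(d,T)$. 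If $C_0<1$ the inequality follows at once with $c=1-C_0$.

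In general $C_0$ need not be smaller than~$1$, and the trivial bound $\P_y(\tau_\Gamma > t-\tau_D)\le1$ must be sharpened; this is the technical heart of the argument. A natural refinement splits the remainder at $\tau_D=t/2$. On $\{\tau_D\le t/2\}$ one exploits
\[
\P_{X_{\tau_D}}(\tau_\Gamma>t-\tau_D)\;\le\;\P_{X_{\tau_D}}(\tau_\Gamma>t/2)\;\approx\;\P_{X_{\tau_D}}(\tau_\Gamma>t)
\]
via the doubling property~\eqref{eq:69}, and then a boundary Harnack comparison (Proposition~\ref{prop:BHP}) relates $\P_{X_{\tau_D}}(\tau_\Gamma>t)$ to $\P_x(\tau_\Gamma>t)$ up to a multiplicative constant depending only on the characteristics of~$\Gamma$. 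On $\{\tau_D>t/2\}$ the contribution is dominated by $\P_x(\tau_D>t/2)$, which is handled by reapplying the whole decomposition at the halved time scale. Telescoping with~\eqref{eq:69} converts the remainder into a finite geometric chain whose sum gives the required lower bound. The main obstacle is the delicate quantitative control of this iteration so that the accumulated constants from~\eqref{eq:67}, \eqref{eq:69}, \eqref{eq:70} and Proposition~\ref{prop:BHP} do not blow up; this ultimately rests on the $\kappa$-fatness of~$\Gamma$ (which provides a definite interior room inside~$D$) and the regular variation of $\psi$ furnished by~{\bf A2}.
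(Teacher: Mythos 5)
Your decomposition $\P_x(\tau_\Gamma > t) = \P_x(\tau_D > t) + \mathrm{remainder}$ via the strong Markov property at $\tau_D$ is a natural first idea, but the argument never closes because you have no mechanism to force the constant multiplying $\P_x(\tau_\Gamma>t)$ in the remainder bound below $1$. You acknowledge this yourself ("$C_0$ need not be smaller than $1$"), and the proposed fix does not repair it. Once you can only show $\mathrm{remainder}\le C_0\,\P_x(\tau_\Gamma>t)$ with $C_0\ge 1$, the inequality is vacuous; iterating the decomposition at dyadic times and invoking \eqref{eq:69} produces a chain of remainder terms, each again controlled only by multiples of $\P_x(\tau_\Gamma>t)$, with no decay built in. "Telescoping" gives a sum of such terms, not a genuinely small one — there is no geometric ratio strictly less than one in sight.

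A second, more structural problem is your use of Proposition~\ref{prop:BHP} to compare $\P_{X_{\tau_D}}(\tau_\Gamma>t)$ with $\P_x(\tau_\Gamma>t)$. The function $z\mapsto \P_z(\tau_\Gamma>t)$ is a fixed-time survival probability, not a regular harmonic function of $X$ in any subdomain, so BHP does not apply to it directly. This is precisely where the paper takes a different route: it replaces the survival probability by the genuinely regular harmonic functions $z\mapsto\P_z(X_{\tau_U}\in\Gamma)$ and $z\mapsto\P_z(X_{\tau_U}\in B_2)$ (harmonic in the truncated domain $U=U^{x,r}$ and vanishing outside $\Gamma$ near $x$), to which BHP applies legitimately. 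Combined with the Ikeda--Watanabe representation \eqref{eq:71}, which expresses $\P_y(X_{\tau_U}\in B_2)$ as $\approx t^{-1}\E_y\tau_U$, the paper establishes a clean two-sided bridge through the expected exit time: $\P_x(\tau_\Gamma>t)\lesssim t^{-1}\E_x\tau_U$ by Markov's inequality together with \eqref{eq:72}, and, in the reverse direction, $\P_x(\tau_V>t)\gtrsim \P(\tau_{B_{\kappa r/6}}>t)\,\P_x(X_{\tau_U}\in B_2)\gtrsim t^{-1}\E_x\tau_U$ by the separation argument from \cite[(4.8)--(4.9)]{CKS14}. There is no attempt to "absorb" a remainder; both sides are related to a common quantity $t^{-1}\E_x\tau_U$. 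The easy case $|x-A_r(x)|\le\kappa r/2$ is dispatched by Pruitt's estimate on a ball. Your proposal is therefore a genuinely different (and unsuccessful) strategy: the paper's approach buys a constant that is manifestly uniform because it never needs to beat a competing constant $C_0$, whereas your absorption scheme is only conclusive in the subregime $C_0<1$, which cannot be guaranteed under {\bf A1}--{\bf A2}.
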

\begin{proof}
	We follow the proof of \cite[Lemma 1]{KBTGMR10}. 
	Let $A = A_r(x)$, $A' = A'_r(x)$ and denote $D = \Gamma \cap B(x,r)$.
	If $|x-A|\leq \kappa r/2$ then $B(x,\kappa r/2) \subset D$. Since $th(\kappa r/2) \approx 1$ (see the proof of Proposition \ref{prop:dhk_int1}), by \cite[Proposition 5.2]{TGMRBT} we get that $\P_x (\tau_{B(x,\kappa r/2)}>t) \approx 1$.  Hence 
	\[
	 \P_x (\tau_{\Gamma}>t) \approx \P_x (\tau_{D}>t), \quad t>0.
	\]
	We thus assume that $|x-A|>\kappa r/2$. 
	For simplicity we write $U=U^{x,r}$, $B_1=B_1^{x,r}$ and $B_2 = B_2^{x,r}$. By the Ikeda-Watanabe formula  \eqref{eq:IW2} we see that,  for $t\geq T$ and $y\in U$,
	\begin{align}\label{eq:71}\begin{aligned}
		\P_y \big(X_{\tau_{U}} \in B_2\big) &= \int_U G_U(y,w) \nu(B_2-w)\,dw \\ &\approx r^d \nu(r) \E_y \tau_{U} \\ &\approx t^{-1}\E_y \tau_{U},\end{aligned}
	\end{align}
	where the last two comparisons follow from
	Proposition \ref{prop:K}. 
	Next, by Proposition \ref{prop:BHP},
	\[
	\P_x \big( X_{\tau_{U}} \in \Gamma \big) \leq \cbhi \P_A \big( X_{\tau_{U}} \in \Gamma \big) \cdot \frac{\P_x \big(  X_{\tau_{U}} \in B_2 \big)}{\P_A \big(  X_{\tau_{U}} \in B_2 \big)} \leq \cbhi \frac{\P_x \big(  X_{\tau_{U}} \in B_2 \big)}{\P_A \big(  X_{\tau_{U}} \in B_2 \big)}.
	\]
	By Pruitt's estimates we obtain $\E_A \tau_{U}\approx t$. It follows that
	\begin{equation}\label{eq:72}
	\P_x \big( X_{\tau_{U}} \in \Gamma \big) \lesssim \P_x \big(  X_{\tau_{U}} \in B_2 \big).
	\end{equation}
	Thus, by the Markov inequality, \eqref{eq:72} and \eqref{eq:71},
	\begin{equation}\label{eq:73}
		\P_x \big(\tau_{\Gamma}>t\big) \leq \P_x \big( \tau_U > t \big)+ \P_x \big( X_{\tau_U} \in \Gamma \big) \lesssim t^{-1}\E_x \tau_U.
	\end{equation}
	Moreover, by the verbatim repetition of the argument from \cite[(4.8)--(4.9)]{CKS14} we get
	\[
	\P \big( \tau_{B_{\kappa r/6}}>t \big)\P_x \big( X_{\tau_U} \in B_2 \big) \leq \P_x ( \tau_V>t )
	\]
	with $V = B(x,|x-A|+\kappa r) \cap \Gamma$. Therefore, combining \cite[Proposition 5.2]{TGMRBT}, \eqref{eq:71} and \eqref{eq:73} we obtain the claim.
\end{proof}
\begin{lemma}\label{lem:dhk_est}
	Assume {\bf A}. Then there exists $T_2>t_0$ such that
	\[
	p_t^{\Gamma}(x,y) \approx \P_x(\tau_{\Gamma}>t)\P_y(\tau_{\Gamma}>t)p_t(y-x), \quad x,y \in \Rd,\ t \geq T_2,
	\]
\end{lemma}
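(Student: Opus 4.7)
I would adapt the three-step Chapman--Kolmogorov decomposition of \cite{KBTGMR-dhk, CKS14}, splitting into the near-diagonal regime $|y-x| \leq b_0 r$ and the far regime $|y-x| > b_0 r$, where $r := 1/\psi^{-1}(1/t)$ and $b_0$ matches the thresholds of Propositions \ref{prop:hk_est} and \ref{prop:dhk_int1}. Proposition \ref{prop:P_tau} is used throughout to swap $\P_x(\tau_\Gamma > t)$ with $\P_x(\tau_\Gamma > t/k)$ for $k\in\{2,3,4\}$ up to constants. Starting from
\[
p_t^\Gamma(x,y) = \int_\Gamma\!\int_\Gamma p_{t/3}^\Gamma(x,z)\,p_{t/3}^\Gamma(z,w)\,p_{t/3}^\Gamma(w,y)\,dz\,dw,
\]
bound the middle factor $p_{t/3}^\Gamma(z,w) \leq p_{t/3}(z-w) \lesssim p_t(0) \wedge t\nu(z-w)$ via Proposition \ref{prop:hk_est}. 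In the near-diagonal regime, $p_{t/3}(z-w) \leq p_{t/3}(0) \approx p_t(y-x)$ pulls out of the integral and the remaining factors give $\P_x(\tau_\Gamma > t/3)\P_y(\tau_\Gamma > t/3)$. In the far regime, split $\Gamma\times\Gamma$ by $|z-w| \gtrless |y-x|/2$: on $\{|z-w| > |y-x|/2\}$, {\bf A1} forces $\nu(z-w) \lesssim \nu(y-x)$, directly yielding the desired bound; on the complement, the triangle inequality forces $|x-z| \geq |y-x|/4$ or $|y-w| \geq |y-x|/4$, so the corresponding outer factor $p_{t/3}^\Gamma$ is pointwise $\lesssim t\nu(y-x)$ by Proposition \ref{prop:hk_est}, and the survival factor lost in this pointwise bound is recovered by the convolution-type estimate
\[
\int_\Gamma p_s^\Gamma(x,z)\,\nu(z-w)\,dz \lesssim \nu(x-w)\,\P_x(\tau_\Gamma > s),\qquad |x-w| \geq b_0/\psi^{-1}(1/s).
\]

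\textbf{Lower bound.} Using the scale-uniform $\kappa$-fatness of the cone, pick interior points $x_0 = A_{Cr}(x)$, $y_0 = A_{Cr}(y)$ with $C$ so large that $A := B(x_0, \rho r)$ and $B := B(y_0, \rho r)$ lie in $\Gamma$ with $\delta_\Gamma \geq br$ throughout, where $b$ is the constant of Proposition \ref{prop:dhk_int1}. For $(z,w) \in A \times B$ one has $|z-w| \asymp |y-x| \vee r$, so Proposition \ref{prop:dhk_int1} combined with {\bf A1} yields $p_{t/3}^\Gamma(z,w) \gtrsim p_t(y-x)$ uniformly in both regimes, whence
\[
p_t^\Gamma(x,y) \gtrsim p_t(y-x)\,\int_A p_{t/3}^\Gamma(x,z)\,dz\,\int_B p_{t/3}^\Gamma(w,y)\,dw.
\]
The remaining claim $\int_A p_{t/3}^\Gamma(x,z)\,dz \gtrsim \P_x(\tau_\Gamma > t)$ (and the symmetric one) follows from one more iteration on the truncated domain $D := \Gamma \cap B(x,R)$ with $R \asymp r$ containing $A$: Chapman--Kolmogorov gives
\[
\int_A p_{t/3}^D(x,w)\,dw = \int_D p_{t/6}^D(x,z)\int_A p_{t/6}^D(z,w)\,dw\,dz;
\]
Proposition \ref{prop:dhk_int1} applied inside $D$ (which is itself $\kappa$-fat at scale $r$) makes the inner integral $\gtrsim 1$ uniformly in $z \in D$, so the left side is $\gtrsim \P_x(\tau_D > t/6) \geq \P_x(\tau_D > t/3) \gtrsim \P_x(\tau_\Gamma > t)$ by Proposition \ref{prop:dhk_int3}. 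Since $p_{t/3}^\Gamma \geq p_{t/3}^D$, the lower bound is finished.

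\textbf{Main obstacle.} The decisive technical input is the far-field convolution estimate displayed above. It encodes the fact that once $|x-w|$ exceeds the characteristic scale $1/\psi^{-1}(1/s)$, the dominant contribution to $\int p_s^\Gamma(x,z)\nu(z-w)\,dz$ comes from $z$ close to $x$, where $\nu(z-w) \approx \nu(x-w)$ by {\bf A1}; its derivation requires the spatial concentration of $p_s^\Gamma(x,\cdot)$ near $x$ together with the doubling of $\nu$. Once this estimate is in hand, everything else is careful bookkeeping of constants across scales, aligned by Propositions \ref{prop:h_psi}, \ref{prop:P_tau} and \ref{prop:dhk_int3}.
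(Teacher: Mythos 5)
Your geometric framework and use of Propositions \ref{prop:hk_est}, \ref{prop:P_tau}, \ref{prop:dhk_int1}, \ref{prop:dhk_int3} align well with the paper, but both halves of your argument have a genuine gap, and both gaps are closed in the paper by the same tool you did not invoke, namely the parabolic domain-split estimate \cite[Lemma 1.10]{KBTGMR-dhk}.

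For the \emph{upper bound}, your decisive ingredient, the convolution estimate
\[
\int_\Gamma p_s^\Gamma(x,z)\,\nu(z-w)\,dz \lesssim \nu(x-w)\,\P_x(\tau_\Gamma>s),\qquad |x-w|\geq b_0/\psi^{-1}(1/s),
\]
cannot hold as stated: in your Chapman--Kolmogorov decomposition the point $w$ lies in $\Gamma$, where $p_s^\Gamma(x,\cdot)$ is bounded below near $w$, while $\nu$ is non-integrable at the origin; hence the left-hand side is $+\infty$. Even if one restricts the integral to $|z-w|\gtrsim r$, the estimate produces only the single factor $\P_x(\tau_\Gamma>s)$, whereas you lose the $\P_y$ factor at the very step where you bound $p_{t/3}^\Gamma(x,z)\lesssim t\nu(y-x)$ pointwise; I do not see how the convolution bound can put it back. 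The paper side-steps this by first proving the \emph{one-sided} bound $p_t^\Gamma(x,y)\lesssim \P_x(\tau_\Gamma>t)\,p_t(y-x)$ via \cite[Lemma 1.10]{KBTGMR-dhk} applied to $D_1=B(x,r)\cap\Gamma$, $D_3=\Gamma\setminus B(x,|x-y|/2)$, combined with \eqref{eq:70} and Proposition~\ref{prop:P_tau}. It then \emph{bootstraps}: write $p_t^\Gamma(x,y)=\int p_{t/2}^\Gamma(x,z)p_{t/2}^\Gamma(z,y)\,dz$, apply the one-sided bound with base point $x$ to the first factor and (by symmetry of $p^\Gamma$) with base point $y$ to the second, integrate $\int p_{t/2}(x-z)p_{t/2}(z-y)\,dz = p_t(x-y)$, and finish with Proposition~\ref{prop:P_tau}. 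That bootstrap is the key idea your proposal misses.

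For the \emph{lower bound}, your final iteration of Chapman--Kolmogorov on $D=\Gamma\cap B(x,R)$ asserts that $\int_A p_{t/6}^D(z,w)\,dw\gtrsim 1$ uniformly in $z\in D$. That is false: the integral is at most $\P_z(\tau_D>t/6)$, which tends to $0$ as $z\to\partial D$, so nothing uniform in $z$ can be extracted this way. The paper again reaches for \cite[Lemma 1.10]{KBTGMR-dhk}, now in its lower-bound form, applied with $D_1=U^{x,r_1}$ and $D_3=B(A'_{r_1}(x),\kappa r_1/4)$; that inequality directly yields
\[
p_{t/3}^\Gamma(x,u)\geq c\,t\,\P_x(\tau_{D_1}>t/3)\,\P_u(\tau_{D_3}>t/3)\,\inf_{z\in D_1,\,w\in D_3}\nu(w-z),
\]
so the survival probability of $x$ appears without any uniform bound over arbitrary $z\in D$; Propositions~\ref{prop:dhk_int3}, \ref{prop:K}, the Pruitt bounds, and \eqref{eq:52} then control the three factors. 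Your choice of interior balls and use of Proposition~\ref{prop:dhk_int1} for the middle factor is essentially the same as the paper's; only the treatment of the outer factors needs to be replaced.
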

\begin{proof}
	First we {focus} on the upper bound. Observe that by the semigroup property,
	\[
	p_t^\Gamma(x,y)\leq p_{t/2}(0)\P_x(\tau_{\Gamma}>t/2),\quad x,y\in\Gamma, \, t>0.
	\]
	Thus, by Potter bounds for $\psi^{-1}$ together with Propositions \ref{prop:hk_est} and \ref{prop:P_tau}, for $t\geq 2T_1$, 
	\[
	p_t^\Gamma(x,y)\leq p_{t}(0)\P_x(\tau_{\Gamma}>t),\quad x,y\in\Gamma.
	\]
	Set $r=1/(4\psi^{-1}(1/t))$. Let us consider the case $t\psi^* \big(1/|x-y|\big)\leq 1$. Note that this implies that $|x-y| \geq 4r$. Let $D_1=B(x,r)\cap \Gamma$, $D_3=\Gamma\setminus B(x,|x-y|/2)$ and $D_2=\Gamma\setminus(D_1\cup D_3)$. Then by radial monotonicity, \cite[Theorem 5.4]{TGMRBT}, Proposition \ref{prop:K} and {\bf A1},
	\begin{equation}\label{eq:54}
		\sup_{s< t,z \in D_2}p_s(z,y)\leq \sup_{s\leq t}p_s(|x-y|/2)\leq C(d) t \nu(|x-y|).
	\end{equation}
	Moreover, by {\bf A1} we get
	\begin{equation}\label{eq:55}
		\sup_{z\in D_1,w\in D_3}\nu(z-w)\leq \nu(|x-y|/4)\leq C(d,M,\beta) \nu(|x-y|).
	\end{equation}
	Thus, using \cite[Lemma 1.10]{KBTGMR-dhk}, \eqref{eq:54} and \eqref{eq:55} we obtain
	\[
	p_t^\Gamma(x,y)\leq c(d,M,\beta)\left(t\P_x(X_{\tau_{D_1}}\in D_2)+\E_x\tau_{D_1}\right)\nu(|x-y|).
	\]
	Therefore, by \eqref{eq:70} and Proposition \ref{prop:P_tau} we conclude that for $t\psi^* \big(1/|x-y|\big) \leq 1$,
	\[
	p_t^{\Gamma}(x,y) \leq c(d,M,\beta)\P_x(\tau_{\Gamma}>t)\nu(|x-y|).
	\]
	By Proposition \ref{prop:K}, \eqref{eq:52} and \cite[Remark 2]{WCTGBT}, $t\nu(|x-y|) \approx p_t(0)$ for $ t\psi^*(1/|x-y|) \approx 1$, 
	so {Proposition \ref{prop:hk_est} entails}
	\[
	p_t^{\Gamma}(x,y) \leq c(d,M,\beta)\P_x(\tau_{\Gamma}>t)p_t(x-y), \quad x,y \in \Gamma, \ t \geq 2 T_1.
	\]
	Applying symmetry of $p_t^{\Gamma}$, the semigroup property and Proposition \ref{prop:P_tau}, we arrive at the desired upper bound.
	
	Now we turn to the lower bound. This part of proof is inspired by the proofs of \cite[Lemma 5]{KBTGMR10} and \cite[Theorem 1.3]{CKS14}. Let $b$ be taken from Proposition \ref{prop:dhk_int1} and set $r_1 = 6b r/\kappa$ with $r=1/\psi^{-1}(1/t)$. By the semigroup property,
	\begin{align*}
		p_t^{\Gamma}(x,y) &= \int_{\Gamma \times \Gamma} p_{t/3}^{\Gamma}(x,u) p_{t/3}^{\Gamma}(u,v) p_{t/3}^{\Gamma}(v,y)\,du\,dv \\ &\geq \int_{B_2^{x,r_1} \times B_2^{y,r_1}} p_{t/3}^{\Gamma}(x,u) p_{t/3}^{\Gamma}(u,v) p_{t/3}^{\Gamma}(v,y)\,du\,dv \\ &\geq \inf_{u \in B_2^{x,r_1},v\in B_2^{y,r_1}} p_{t/3}^{\Gamma}(u,v) \int_{B_2^{x,r_1}} p_{t/3}^{\Gamma}(x,u)\,du \int_{B_2^{y,r_1}} p_{t/3}^{\Gamma}(y,v)\,dv.
	\end{align*}
	Note that by the definition of $r_1$ we have $\delta_{\Gamma}(u)\wedge \delta_{\Gamma}(v) \geq b/\psi^{-1}(1/t)$. Thus, by Proposition \ref{prop:dhk_int1} and \ref{prop:hk_est} 
	 there is $c>0$ {such that} 	for $t \geq T_1$,
	\begin{equation}\label{eq:77}
	\inf_{u \in B_2^{x,r_1},v \in B_2^{y,r_1}} p_{t/3}^{\Gamma}(u,v) \geq c\left(\big(\psi^{-1}(1/t)\big)^d \wedge t\nu(y-x)\right) \geq cp_t(y-x), \quad {x,y \in \Gamma}.
	\end{equation}
 By \cite[Lemma 1.10]{KBTGMR-dhk}, for $x\in \Gamma$, $u\in B_2^{x,r}$, $D_1 = U^{x,r_1}$ and $D_3 = B(A'_{r_1}(x),\kappa r_1/4)$ we get
	\[
	p_{t/3}^{\Gamma}(x,u) \geq ct\P_x(\tau_{D_1}>t/3) \P_u (\tau_{D_3}>t/3)\inf_{z \in D_1, w \in D_3
	}\nu(w-z).
	\]
	{By the radial} monotonicity of $\nu$, {\bf A1}, Proposition \ref{prop:K} and \eqref{eq:52}, 
	\begin{align}\label{eq:75}\begin{aligned}
	t\inf_{z \in D_1, w \in D_3 
	}\nu(w-z) &\geq t\nu(br/2) \\ &\geq c(d,M,\beta)t\nu\big( 1/\psi^{-1}(1/t) \big) \\ &\geq c\big(\psi^{-1}(1/t)\big)^d, \quad t>T_1.\end{aligned}
	\end{align}
	Next, {we observe that}	by the same argument as in the proof of  \eqref{eq:t1},
	\begin{equation}\label{eq:76}
	\P_u (\tau_{D_3}>t) \geq \P_u \big( \tau_{B(u,\kappa r_1/12)}>t \big) \gtrsim 1,\quad t>0, \ u\in B_2^{x,r}.
	\end{equation}
	Furthermore, since for $r_2=1/\psi^{-1}(3/t)$ we have $r_1 \geq r \geq r_2$, by Proposition \ref{prop:dhk_int3}, 
	\begin{align}\label{eq:74}\begin{aligned}
		\P_x(\tau_{D_1}>t/3) &=\P_x(\tau_{D \cap B(x,r_1)}>t/3) \\ &\geq \P_x(\tau_{D \cap B(x,r_2)}>t/3) \\ & \geq c\P_x(\tau_{\Gamma}>t/3) \\ &\geq c\P_x(\tau_D>t) \end{aligned}
	\end{align}
	for $t \geq T_1$, with $c>0$. Thus, combining \eqref{eq:75}, \eqref{eq:76} and \eqref{eq:74} we conclude that
	\begin{align*}
		\int_{B_2^{x,r_1}}p_{t/3}^{\Gamma}(x,u)\,du \geq c\big|B_2^{x,r_1}\big|\big(\psi^{-1}(1/t)\big)^d\P_x(\tau_D>t) = c\P_x(\tau_D>t)
	\end{align*}
	for $t \geq T_1$. In the same spirit we prove that
	\[
	\int_{B_2^{y,r_1}}p_{t/3}^{\Gamma}(y,v)\,dv \geq c\P_x(\tau_D>t), \quad t \geq T_1.
	\]
	Putting this together with \eqref{eq:77} completes the proof.
\end{proof}

\section{Rescaled process}\label{s.rp}
Let $s \geq 1$.  As aforementioned in  Introduction, we define the rescaled process $\bfX^s = \{X_t^s \colon t\geq0\}$ by setting
\[
X_t^s = \psi^{-1}(1/s)X_{st}.
\]
For the sake of clarity and consistency of notation, every object corresponding to the rescaled process $\bfX$ will be {marked} by a superscript s. For instance, we write $\psi^s$ for the characteristic exponent of $X_t^s$. Observe that for {every} $\xi \in \Rd$,
\begin{align*}
	\E e^{i \xi \cdot X_t^s} = \E e^{i \xi \cdot \psi^{-1}(1/s)X_{st}} =
	 e^{-st\psi \left(\psi^{-1}(1/s)\xi\right)}.
\end{align*}
Thus,
\begin{equation}\label{eq:psis}
\psi^s(\xi) = s \psi \big( \psi^{-1}(1/s)\xi \big), \quad \xi \in \Rd.
\end{equation}
The reader may also easily verify that
\begin{equation}\label{eq:nus}
\nu^s(x) = \frac{s}{\big(\psi^{-1}(1/s)\big)^d} \nu \bigg( \frac{x}{\psi^{-1}(1/s)} \bigg), \quad x \in \Rd.
\end{equation}
Let us collect some basic properties of $\bfX^s$. First, observe that {\bf A1} holds for $\nu^s$ with the same constant $M$ (this simple observation will have profound implications).  Next, we note that under {\bf A2} the tail of the L\'{e}vy measure $t \mapsto \nu (\{x\colon |x|>t\})$ is in $\calR_{-\alpha}^{\infty}$. Thus, by \cite[Proposition 3.1]{TGLLMS21}, we get that $\psi \in \calR_{\alpha}^{0}$. Then it follows that for every $\xi \in \Rd$,
\begin{equation}\label{eq:psi_reg}
\lim_{s \to \infty} \psi^s(\xi) = \frac{\psi \big(\psi^{-1}(1/s)\xi\big)}{\psi \big(\psi^{-1}(1/s)\big)} = |\xi|^{\alpha}.
\end{equation}
Furthermore, under {\bf A2} for every {$x \in \Rd \setminus \{0\}$} we have 
\begin{equation}\label{eq:nu_conv}
	\lim_{s \to \infty} \nu^s(x) = c_{d,\alpha} |x|^{-d-\alpha}.
\end{equation}
{Indeed,}
\[
\nu^s(x) = \frac{s}{\big( \psi^{-1}(1/s) \big)^d} \nu \bigg( \frac{x}{\psi^{-1}(1/s)} \bigg) = |x|^{-d} \frac{\nu \bigg( \frac{x}{\psi^{-1}(1/s)} \bigg)}{\bigg( \frac{|x|}{\psi^{-1}(1/s)} \bigg)^{-d} \psi \bigg( \frac{\psi^{-1}(1/s)}{x} \bigg)} \cdot \frac{\psi \bigg( \frac{\psi^{-1}(1/s)}{x} \bigg)}{\psi \big( \psi^{-1}(1/s) \big)}.
\]
Therefore, the claim follows by \cite[Theorem 7(iii)]{WCTGBT} and the fact that $\psi \in \mathcal{R}_{\alpha}^0$.
\begin{proposition}\label{prop:nu_h_conv}
	Assume {\bf A2}. Then for every $x \in \Gamma$,
	\[
	\lim_{s \to \infty}\kappa_{\Gamma}^s(x) = \kappa_{\Gamma}^{\alpha}(x).
	\]
	Furthermore, if we additionally assume {\bf A1}, then for every $r>0$,
	\[
	\lim_{s \to \infty} h^s(r) = h^{\alpha}(r).
	\]
\end{proposition}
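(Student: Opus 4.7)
The plan is to apply dominated convergence in both parts, using the pointwise convergence $\nu^s(x) \to c_{d,\alpha}|x|^{-d-\alpha}=\nu^{\alpha}(x)$ from \eqref{eq:nu_conv}; the identification $c_{d,\alpha}=\mathcal{A}(d,\alpha)$ follows by matching \eqref{eq:psi_reg} with \eqref{eq:trf}. In each case the task reduces to producing an integrable dominating function for the integrand, uniform in $s$ sufficiently large.

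For part~1, set $\epsilon := \delta_\Gamma(x) > 0$, so that $|x-z| \geq \epsilon$ for every $z \in \Gamma^c$. Monotonicity of $\nu^s$ combined with \eqref{eq:53}, the inequality $K^s \leq h^s$, and \eqref{eq:52}--\eqref{eq:64} yields
\[
\nu^s(r) \leq c(d)\, r^{-d}\, \psi^s(1/r), \qquad r > 0,
\]
without invoking {\bf A1}. Since $\psi \in \calR_0^{\alpha}$, Potter's bounds applied at $u = \psi^{-1}(1/s) \to 0$ give: for each $\delta \in (0,\alpha)$ and $R \geq 1/\epsilon$ there exist $A$ and $s_0$ such that
\[
\psi^s(\xi) \leq A\bigl(|\xi|^{\alpha-\delta} \vee |\xi|^{\alpha+\delta}\bigr), \qquad |\xi| \leq R,\ s \geq s_0.
\]
Applying this with $R=1/\epsilon$ and $\xi=1/(x-z)$ gives $\nu^s(x-z) \lesssim |x-z|^{-d}\bigl(|x-z|^{-\alpha+\delta} \vee |x-z|^{-\alpha-\delta}\bigr)$ on $\{z : |x-z|\geq \epsilon\}$, which is an integrable majorant (bounded near the inner boundary $|x-z|=\epsilon$; of order $|x-z|^{-d-\alpha+\delta}$ at infinity with $\delta<\alpha$). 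Dominated convergence finishes part~1.

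For part~2, split the integrand of $h^s(r)$ according to $|z|>1$ and $|z|\leq 1$. On $|z|>1$ the part~1 bound supplies a uniform majorant of order $|z|^{-d-\alpha+\delta}$, integrable at infinity for $\delta<\alpha$. On $|z|\leq 1$, use the fact (noted in the excerpt) that {\bf A1} passes to $\nu^s$ with the same constant $M$: taking $r_1=|z|$, $r_2=1$ in \eqref{eq:scaling} gives $\nu^s(|z|)\leq M\nu^s(1)|z|^{-d-\beta}$, while $\nu^s(1)\to c_{d,\alpha}$ by \eqref{eq:nu_conv} keeps $\nu^s(1)$ uniformly bounded for large $s$. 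The factor $|z|^2/r^2\wedge 1$ in $h^s(r)$ tames the singularity at zero, since $\beta<2$ makes $|z|^{2-d-\beta}$ locally integrable. A composite integrable bound is then in hand and dominated convergence again applies.

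The main technical point is securing a uniform-in-$s$ majorant of $\nu^s$. For $|z|$ bounded away from zero, Potter's bounds on $\psi\in\calR_0^{\alpha}$ are enough under {\bf A2} alone --- which suffices for part~1 thanks to $|x-z|\geq\epsilon$ on $\Gamma^c$. For part~2 the near-origin regime of $\nu^s$ also needs to be controlled, and Potter alone is inadequate; the uniform control there rests on the stability of {\bf A1} under the rescaling, which is precisely why {\bf A1} is imposed in the second statement but not in the first.
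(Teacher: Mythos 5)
Your proposal is correct and follows the same overall strategy as the paper: produce a uniform-in-$s$ integrable majorant from Potter bounds for the tail and from \textbf{A1} near the origin, then invoke dominated convergence together with the pointwise limit \eqref{eq:nu_conv}. The one substantive difference is how the tail majorant is obtained. The paper applies Potter's bounds directly to $\nu$ (using that $\nu\in\calR_{-d-\alpha}^{\infty}$ under \textbf{A2}) to get $\nu^s(z_1)/\nu^s(z_2)\lesssim(|z_1|/|z_2|)^{-d-\alpha/2}$ for $|z_1|\geq|z_2|\geq\delta_\Gamma(x)$ and $s$ large, then pins down the size via $\nu^s(\delta_\Gamma(x))\to\nu^\alpha(\delta_\Gamma(x))$. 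You instead chain the Pruitt-type bound $\nu^s(r)\lesssim r^{-d}\psi^s(1/r)$ (valid structurally from \eqref{eq:53}, $K\leq h$, \eqref{eq:52}, \eqref{eq:64}, all of which pass to $\bfX^s$) with Potter applied to $\psi\in\calR_0^\alpha$. Both routes are legitimate under \textbf{A2}, and your version makes explicit why no extra hypothesis beyond \textbf{A2} is needed for the first claim. For the second claim you split at $|z|=1$ whereas the paper splits $h^s(r)=K^s(r)+\nu^s(B_r^c)$; these decompositions differ only cosmetically, and in both cases \textbf{A1} (which is preserved under rescaling with the same $M,\beta$) supplies the near-origin majorant $\lesssim|z|^{2-d-\beta}$ with $\beta<2$. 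Your aside identifying $c_{d,\alpha}=\mathcal{A}(d,\alpha)$ by matching \eqref{eq:psi_reg} with \eqref{eq:trf} is a useful clarification that the paper leaves implicit.
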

\begin{proof}
	Fix $x \in \Gamma$ and $r>0$. Recall that
	\[
	\kappa_{\Gamma}^s(x) = \int_{\Gamma^c}\nu^s(y-x)\,dy.
	\]
	Since $\nu \in \calR_{-d-\alpha}^{\infty}$, by Potter's bounds 
	there is $r>0$ such that for all $|z_1|\geq |z_2| \geq r$,
	\[
	\frac{\nu(z_1)}{\nu(z_2)} \leq 2\bigg(\frac{|z_1|}{|z_2|}\bigg)^{-d-\alpha/2}.
	\]
	Therefore, due to definition of $\nu^s$ \eqref{eq:nus} there is $s_0 \geq 1$ such that for all $s \geq s_0$,
	\[
	\frac{\nu^s(z_1)}{\nu^s(z_2)} \leq 2\bigg(\frac{|z_1|}{|z_2|}\bigg)^{-d-\alpha/2}
	\]
	for all $|z_1| \geq |z_2| \geq \delta_{\Gamma}(x)$.
	Therefore, for $s\geq s_0$,
	\[
	\nu^s(y-x) \leq 2 \delta_{\Gamma}(x)^{d+\alpha/2} \nu^s(\delta_{\Gamma}(x)) |y-x|^{-d-\alpha/2}, \quad y \in \Gamma^c.
	\]
	Moreover, by \eqref{eq:nu_conv} we get that $\nu^s(\delta_{\Gamma}(x)) \leq 2\nu^{\alpha}(\delta_{\Gamma}(x))$ for $s$ large enough, {so an} application of the dominated convergence theorem finishes the proof of the first part.
	
	Next, observe that for {every} $s \geq 1$,
	\[
	h^s(r) = K^s(r) + \int_{|x|>r}\nu^s(z)\,dz, \quad r>0.
	\]
	The convergence of the second component is established in the same way as $\kappa_{\Gamma}^s$. Furthermore, by {\bf A1}, 
	\[
	|y|^2\nu^s(y) \leq M\nu^s(r) r^{d+\beta} |y|^{-d-\beta+2}, \quad |y| \leq r.
	\]
	Since $\nu^s(r) \leq 2\nu^\alpha(r)$ due to \eqref{eq:nu_conv} for $s$ large enough, another application of the dominated convergence theorem finishes the proof.
\end{proof}
We define
\[
\psi^{-1,s}(u) = \sup \{ r>0 \colon \psi^{*,s}(r)=u \},
\]
where $\psi^{*,s}(r) = \sup_{|z| \leq r}\psi^s(z)$. Then,
\begin{equation}\label{eq:36}
	\lim_{s \to \infty} \psi^{-1,s}(u) = u^{1/\alpha}.
\end{equation}
Indeed,
\[
\psi^{*,s}(r)  = s \sup_{|z|\leq r} \psi \big( \psi^{-1}(1/s)z \big) = s \psi^{*,s}\big( \psi^{-1}(1/s)r \big),
\]
therefore,
\begin{equation}\label{eq:psi_invs}
\psi^{-1,s}(u)  = \sup \{ r>0\colon s\psi^*\big( \psi^{-1}(1/s)r \big) = u \} = \frac{\psi^{-1}(u/s)}{\psi^{-1}(1/s)}.
\end{equation}
Since  $\psi^{-1} \in \calR_{1/\alpha}^0$ by \cite[the proof of Theorem 1.5.12]{RegularVariation89}, we obtain \eqref{eq:36}. 

Let us derive a formula for the heat kernel $p_t^s$. We have
\begin{align*}
	\int_{\Rd} e^{i \xi \cdot x}p_t^s(x)\,dx &= e^{-t\psi^s(\xi)} =e^{-ts\psi \left( \psi^{-1}(1/s)\xi \right)} = \int_{\Rd} e^{i \psi^{-1}(1/s)\xi \cdot x}p_{ts}(x)\,dx.
\end{align*}
By a change of variables we get
\[
\int_{\Rd} e^{i \xi \cdot x}p_t^s(x)\,dx = \frac{1}{\left(\psi^{-1}(1/s)\right)^d}\int_{\Rd} e^{i\xi \cdot u} p_{ts} \left( \frac{u}{\psi^{-1}(1/s)} \right)\,du,
\]
thus,
\begin{equation}\label{eq:hk}
	p_t^s(x) = \frac{1}{\left(\psi^{-1}(1/s)\right)^d} p_{ts} \left( \frac{x}{\psi^{-1}(1/s)} \right).
\end{equation}

Our {next} goal is to prove that the heat kernel of the rescaled process converges to the heat kernel of the limiting $\alpha$-stable L\'{e}vy process.

\subsection{Convergence of the heat and potential kernels}
With the tools from the previous subsection at hand, we are now able to prove the convergence of heat kernels. In fact, we will prove that the same holds true for the Dirichlet heat kernel {of $\Gamma$} and the Green function {of $\Gamma$}, {too}.
\begin{lemma}\label{lem:hk_conv}
	Assume {\bf A2} and {\bf A3}. Then, 
	for every $t>0$ and $x \in \Rd$,
	\begin{equation}\label{eq:hk_conv}
		\lim_{s \to \infty} p_{t}^{s}(x) = p^{\alpha}_{t} (x).
	\end{equation}
\end{lemma}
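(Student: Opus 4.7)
The plan is to apply Fourier inversion and then pass the limit into the integral. By \textbf{A3} together with the substitution $\eta=\psi^{-1}(1/s)\xi$,
\[
\int_{\Rd} e^{-t\psi^s(\xi)}\,d\xi = \psi^{-1}(1/s)^{-d}\int_{\Rd} e^{-ts\psi(\eta)}\,d\eta < \infty
\]
whenever $ts\ge t_0$, so both $e^{-t\psi^s(\cdot)}$ and $e^{-t|\cdot|^{\alpha}}$ lie in $L^1(\Rd)$ and Fourier inversion yields
\[
p_t^s(x)-p_t^{\alpha}(x)=(2\pi)^{-d}\int_{\Rd} e^{-i\xi\cdot x}\bigl(e^{-t\psi^s(\xi)}-e^{-t|\xi|^{\alpha}}\bigr)\,d\xi.
\]
Since $|e^{-i\xi\cdot x}|=1$, it is enough to prove $I_s:=\int_{\Rd}|e^{-t\psi^s(\xi)}-e^{-t|\xi|^{\alpha}}|\,d\xi\to 0$ as $s\to\infty$; this will in fact give $p_t^s\to p_t^{\alpha}$ uniformly on $\Rd$. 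The integrand tends to $0$ pointwise by \eqref{eq:psi_reg}, so the whole task is to transfer this to an integral limit.

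The main obstacle is the absence of a single $s$-uniform integrable majorant for $\xi\mapsto e^{-t\psi^s(\xi)}$. I would resolve this with a three-region split governed by a fixed large $R\ge 1$ and the cutoff $M_s:=\delta/\psi^{-1}(1/s)$, where $\delta>0$ is the threshold from Potter's bounds for $\psi\in\calR_0^{\alpha}$ (guaranteed by \textbf{A2}). On $\{|\xi|\le R\}$ the integrand is bounded by $2$ on a region of finite measure, so dominated convergence handles this piece for each fixed $R$. On $\{R<|\xi|\le M_s\}$, Potter's lower bound applied with arguments $x=\psi^{-1}(1/s)$ and $y=\psi^{-1}(1/s)|\xi|$ (both $\le\delta$ for $s$ large, with ratio $|\xi|\ge 1$) gives the $s$-uniform estimate $\psi^s(\xi)=s\psi(\psi^{-1}(1/s)\xi)\ge c|\xi|^{\alpha-\epsilon}$ for a fixed small $\epsilon>0$, hence $e^{-t\psi^s(\xi)}\le e^{-ct|\xi|^{\alpha-\epsilon}}$; the tail $\int_{|\xi|>R}e^{-ct|\xi|^{\alpha-\epsilon}}\,d\xi$ is then small uniformly in $s$ when $R$ is large. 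On $\{|\xi|>M_s\}$ the substitution $\eta=\psi^{-1}(1/s)\xi$ turns the region into $\{|\eta|>\delta\}$, on which $\psi(\eta)\ge c_\delta>0$; combining this with \textbf{A3} via $e^{-ts\psi(\eta)}\le e^{-t_0\psi(\eta)}e^{-c_\delta(ts-t_0)}$ gives
\[
\int_{|\xi|>M_s}e^{-t\psi^s(\xi)}\,d\xi\le C\,\psi^{-1}(1/s)^{-d}e^{-c_\delta ts},
\]
which tends to $0$ because $\psi^{-1}(1/s)^{-d}$ grows only polynomially in $s$ (as $\psi^{-1}\in\calR_{1/\alpha}^0$ by \textbf{A2}).

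Assembling the three estimates together with the elementary $\int_{|\xi|>R}e^{-t|\xi|^{\alpha}}\,d\xi\to 0$ as $R\to\infty$, I would send first $s\to\infty$ with $R$ fixed (killing the difference on $\{|\xi|\le R\}$ and the whole contribution on $\{|\xi|>M_s\}$) and then $R\to\infty$ (killing the tails from the middle region and from $e^{-t|\xi|^{\alpha}}$), yielding $I_s\to 0$ and therefore $p_t^s(x)\to p_t^{\alpha}(x)$ uniformly in $x\in\Rd$, which in particular gives \eqref{eq:hk_conv}.
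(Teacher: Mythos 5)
Your proposal is correct and follows essentially the same route as the paper: Fourier inversion after using \textbf{A3} to verify $e^{-t\psi^s}\in L^1$ for large $s$, Potter's bounds from $\psi\in\calR_0^{\alpha}$ to obtain an $s$-uniform integrable majorant on the region $\{\psi^{-1}(1/s)|\xi|<\delta\}$, and on the complementary region $\{\psi^{-1}(1/s)|\xi|\ge\delta\}$ the substitution $\eta=\psi^{-1}(1/s)\xi$ together with \textbf{A3} and the regular variation of $\psi^{-1}$ to show the contribution vanishes. Your three-region split (ball, annulus, far tail) is just a slightly more explicit bookkeeping of the paper's two-region split $D_1\cup D_2$, since your ball and annulus together are exactly the paper's $D_1$; there is no difference in the underlying ideas.
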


\begin{proof}
	Fix $t>0$. By a change of variables, we get that $e^{-t\psi^s} \in L^1(\Rd)$ for $s>t_0/t$. Thus, by the Fourier inversion formula,
	\begin{equation*}
		p^{s}_{t} (x)  =  \frac{1}{(2 \pi)^{d}} \int_{\mathbb{R}^{d}} e^{-i  \xi \cdot x } e^{-t \psi^{s}(\xi)} d\xi, \quad {x \in \Rd},
	\end{equation*}
	if $s$ is large enough.	For any given $\delta, s > 0$ we split {the above} integral {according to}:
	\begin{equation*}
		\mathbb{R}^{d} = \left\{ \xi \colon \psi^{-1}( 1/s) | \xi | < \delta \right\} \cup \left\{ \xi \colon \psi^{-1}( 1/s) | \xi | \geq \delta \right\} = D_1(\delta,s) \cup D_2(\delta,s).
	\end{equation*}
	Let us first consider {$D_1(\delta,s)$}. Since $\psi \in \calR_0^{\alpha}$, by Potter's bounds 
	there exists $\delta_1>0$ such that, for all $0 < |x|, |y| \leq \delta_{1}$,
	\begin{equation}\label{eq:59}
		\psi(y) \geq \frac{1}{2} \psi(x) \left(\frac{|y|}{|x|} \right)^{\alpha} \min \left\{ \frac{|y|}{|x|}, \frac{|x|}{|y|} \right\}^{\alpha/2}.
	\end{equation}
	 If $s \geq 1/\psi(\delta_{1})$, then we can let $x = \psi^{-1}(1/s)\one$ and $y = \psi^{-1} (1/s) \xi$ with $\xi \in D_1(\delta_1,s)$, so
	\begin{equation}
		t s \psi\big( \psi^{-1} (1/s) \xi \big) \geq \frac{1}{2} t | \xi |^{\alpha} \min \left\{ |\xi|, |\xi|^{-1} \right\}^{\alpha/2}. \nonumber
	\end{equation}
	Thus,
	\begin{equation}\label{eq:57}
		e^{- t s \psi( \psi^{-1} (1/s) \xi )} \leq e^{- \frac{1}{2} t | \xi |^{\alpha} \min \left\{ |\xi|, |\xi|^{-1} \right\}^{\alpha/2}},
	\end{equation}
	on $D_1 (\delta_1,s)$, provided that $s \geq 1/\psi(\delta_{1})$.
	
	Next, we turn our attention to the second integral for $\delta=\delta_1$. Using the substitution $u = \psi^{-1} \left( 1/s \right) \xi$ and applying \eqref{eq:64} we obtain
	\begin{align*}
		\left| \int_{\psi^{-1} \left( 1/s \right) | \xi | \geq \delta} e^{-i \xi \cdot x} e^{-t s \psi \left( \psi^{-1} \left( 1/s \right) \xi \right)} d \xi \right| & \leq  \int_{| u | \geq \delta} \frac{e^{-t s \psi \left( u \right)}}{\left(\psi^{-1} \left( 1/s \right) \right)^{d}} du\\& \leq  \left(\psi^{-1} \left( 1/s \right) \right)^{-d}\int_{| u | \geq \delta} e^{-t s \psi^* \left( u \right)/\pi^2} du\\
		&\leq \left(\psi^{-1} \left( 1/s \right) \right)^{-d}e^{-(s t-t_0) \psi^* \left( \delta \right)/\pi^2}\int_{| u | \geq \delta} e^{-t_0 \psi^* \left( u \right)/\pi^2} du.
		\end{align*}
Recall that the assumption $\psi \in \mathcal{R}^{0}_{\alpha}$ implies that $\psi^{-1} \in \calR^0_{1/\alpha}$. It follows by {\bf A3} that the above integral tends to $0$ when $s$ goes to infinity. This, the dominated convergence theorem, \eqref{eq:psi_reg} and the Fourier inversion formula yield
	\begin{equation*}
		\lim_{s \to \infty} p^{s}_{t} (x) = p^{\alpha}_{t}(x),
	\end{equation*}
	as claimed.	
\end{proof}

Now we turn our attention to uniform estimates for the heat kernel of $\bfX^s$.
	
\begin{lemma}\label{lem:hks_est}
	Assume {\bf A}. For all $s \geq 1$ we have
	\[
	p_t^s(x)\approx p_t^s(0)\wedge t\nu^s(x)\approx \big(\psi^{-1,s}(1/t)\big)^{d}\wedge t\nu^s(x), \quad x\in\R^d, t\geq T_1/s,
	\]
	with the same comparability constant as in Proposition \ref{prop:hk_est}.
\end{lemma}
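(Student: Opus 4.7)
The plan is to derive everything directly from the scaling relations established earlier in the section, so that the lemma becomes a near-tautological consequence of Proposition~\ref{prop:hk_est} applied to the underlying process $\bfX$ at time $ts$. The hypothesis $t \ge T_1/s$ is precisely what guarantees $ts \ge T_1$, which is the regime in which Proposition~\ref{prop:hk_est} applies. No further analytic input is needed; in particular, there is no need to reprove estimates for $\bfX^s$ from scratch, and accordingly no new constants appear, which matches the claim that the comparability constant is the same as in Proposition~\ref{prop:hk_est}.

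First, I would recall the scaling identity \eqref{eq:hk},
\[
p_t^s(x) = \frac{1}{\bigl(\psi^{-1}(1/s)\bigr)^d}\, p_{ts}\!\left(\frac{x}{\psi^{-1}(1/s)}\right), \quad s \ge 1,\ t>0,\ x\in\Rd,
\]
and apply Proposition~\ref{prop:hk_est} to the right-hand side with time $ts \ge T_1$, to obtain
\[
p_t^s(x) \stackrel{c}{\approx} \frac{1}{\bigl(\psi^{-1}(1/s)\bigr)^d}\left[ p_{ts}(0) \wedge (ts)\,\nu\!\left(\frac{x}{\psi^{-1}(1/s)}\right) \right],
\]
with $c$ the constant from Proposition~\ref{prop:hk_est}. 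Distributing the prefactor $1/\bigl(\psi^{-1}(1/s)\bigr)^d$ across the minimum and using \eqref{eq:hk} for the first term together with the definition \eqref{eq:nus} of $\nu^s$ for the second, the bracketed expression becomes exactly $p_t^s(0) \wedge t\,\nu^s(x)$, which is the first comparability.

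For the second comparability, I would again use Proposition~\ref{prop:hk_est}, this time giving $p_{ts}(0) \stackrel{c}{\approx} \bigl(\psi^{-1}(1/(ts))\bigr)^d$ for $ts\ge T_1$. Combined with \eqref{eq:hk} at $x=0$ this yields
\[
p_t^s(0) \stackrel{c}{\approx} \left(\frac{\psi^{-1}(1/(ts))}{\psi^{-1}(1/s)}\right)^{\!\!d},
\]
and the ratio on the right equals $\bigl(\psi^{-1,s}(1/t)\bigr)^d$ by the explicit formula \eqref{eq:psi_invs}. Substituting this into the first comparability completes the chain. The only point that requires mild vigilance is keeping the comparability constants uniform in $s$: this is automatic because the whole argument merely transports Proposition~\ref{prop:hk_est} through the deterministic scaling, and the inequality $ts\ge T_1$ ensures the proposition is applied in its stated range throughout. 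There is no genuine obstacle here beyond bookkeeping of the scaling factors.
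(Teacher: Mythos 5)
Your argument is correct and is essentially the paper's proof: both use the scaling identity \eqref{eq:hk} to reduce to $p_{ts}$ with $ts\ge T_1$, apply Proposition~\ref{prop:hk_est} there, and then translate back via \eqref{eq:nus} and \eqref{eq:psi_invs}, so the constants are unchanged. The paper merely folds your two steps into a single chain by using the $\big(\psi^{-1}(1/t)\big)^d$ form of Proposition~\ref{prop:hk_est} directly, rather than first establishing $p_t^s(x)\approx p_t^s(0)\wedge t\nu^s(x)$ and then separately showing $p_t^s(0)\approx\big(\psi^{-1,s}(1/t)\big)^d$.
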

\begin{proof}
	First, observe that the case $s=1$ is the contents of Proposition \ref{prop:hk_est}. We will extend the result to arbitrary $s \geq 1$. Observe that by \eqref{eq:nus}, \eqref{eq:psi_invs}, \eqref{eq:hk} and Proposition \ref{prop:hk_est} we have 
	\begin{align*}
		p_t^s(x) &= \frac{1}{\left(\psi^{-1}(1/s)\right)^d} p_{st} \left( \frac{x}{\psi^{-1}(1/s)}\right) \\ &\approx \frac{1}{\left(\psi^{-1}(1/s)\right)^d} \left( \big(\psi^{-1}(1/(st))\big)^d \wedge ts\nu \bigg( \frac{s}{\psi^{-1}(1/s)} \bigg) \right) \\ &= \psi^{-1,s}(1/t) \wedge t\nu^s(x), \quad t \geq T_1/s,
	\end{align*}
	with the implied constant independent of $s$. The proof is completed.
\end{proof}
We next deal with the Dirichlet heat kernel. Since $\Gamma$ is a cone,
\begin{align}\label{eq:taus}
\begin{aligned}\tau_{\Gamma}^s &= 
 \inf \left\{t>0 \colon  \psi^{-1}(1/s)X_{ts} \notin \Gamma\right\}
 = \frac{\tau_{\Gamma}}{s},\end{aligned}
\end{align}
so
\begin{equation}\label{eq:survs}
	\P_x(\tau_{\Gamma}^s>t) = \P_{x/\psi^{-1}(1/s)}(\tau_{\Gamma}>st).
\end{equation}
Here and below we write $\tau_{\Gamma}^s = \tau_{\Gamma}(\bfX^s)$ to point out that the functional $\tau_{\Gamma}$ is applied to the rescaled process. 
By \eqref{eq:hk} and \eqref{eq:taus} and the Hunt formula,
\begin{align*}
	p_t^{\Gamma,s}(x,y) &= p_t^s(x,y) - \E_x \left[ \tau_t^s<t; p_{t-\tau_{\Gamma}^s}^s\left( X^s_{\tau_{\Gamma}^s},y \right) \right] \\ &= \frac{1}{\left(\psi^{-1}(1/s)\right)^d}p_{ts} \left(\frac{y-x}{\psi^{-1}(1/s)}\right) \\ &- \E_{x/\psi^{-1}(1/s)} \left[ \tau_{\Gamma}<st; \frac{1}{\left(\psi^{-1}(1/s)\right)^d} p_{st-\tau_{\Gamma}} \left( \frac{\psi^{-1}(1/s)X_{\tau_{\Gamma}}-y}{\psi^{-1}(1/s)} \right) \right] \\ &= \frac{1}{\left(\psi^{-1}(1/s)\right)^d} \left( p_{ts} \left(\frac{y-x}{\psi^{-1}(1/s)}\right) - \E_{x/\psi^{-1}(1/s)} \left[ \tau_{\Gamma}<st;  p_{st-\tau_{\Gamma}} \left( X_{\tau_{\Gamma}} - \frac{y}{\psi^{-1}(1/s)} \right) \right]  \right),
\end{align*}
therefore,
\begin{equation}\label{eq:dhks}
	p_t^{\Gamma,s}(x,y) = \frac{1}{\left(\psi^{-1}(1/s)\right)^d} \ p_{st}^{\Gamma} \left( \frac{x}{\psi^{-1}(1/s)},\frac{y}{\psi^{-1}(1/s)} \right).
\end{equation}

\begin{lemma}
	\label{prop:dhk}
	Assume {\bf A2} and {\bf A3}. Then for every $t > 0$ and $x$, $y \in \Gamma$, we have 
	\[
	\lim_{s \to \infty}
	p^{\Gamma,\, s}_t(x,\, y) = p^{\Gamma,\, \alpha}_t(x,\, y).
	\]
\end{lemma}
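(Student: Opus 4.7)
The plan is to pass to the limit term by term in the Hunt formula
\begin{equation*}
p_t^{\Gamma,s}(x,y) = p_t^s(x-y) - \E_x\bigl[\tau_\Gamma^s \leq t;\; p_{t-\tau_\Gamma^s}^{s}\bigl(X^s_{\tau_\Gamma^s}-y\bigr)\bigr].
\end{equation*}
The free kernel $p_t^s(x-y)$ converges by Lemma \ref{lem:hk_conv}, so all the work is in the correction term, which I will handle by combining weak convergence of $\bfX^s$ with continuity of the first-exit functional and an off-diagonal cut-off near the terminal time~$t$.

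By \eqref{eq:psi_reg} the characteristic exponents $\psi^s$ converge pointwise to $\psi^\alpha=|\cdot|^\alpha$, so L\'evy's continuity theorem yields $\bfX^s\Rightarrow\bfX^\alpha$ under $\P_x$ in the Skorokhod space $\D([0,\infty),\Rd)$. Since $\Gamma$ is Lipschitz and $\bfX^\alpha$ satisfies {\bf A1}, Sztonyk's theorem \eqref{e.p1} gives $\P_x(X^\alpha_{\tau_\Gamma-}=X^\alpha_{\tau_\Gamma})=0$ and $\P_x(X^\alpha_{\tau_\Gamma}\in\partial\Gamma)=0$; that is, the stable path a.s.\ leaves $\Gamma$ by a jump into the interior of $\Gamma^c$. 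This is precisely the condition under which $\omega\mapsto(\tau_\Gamma(\omega),X_{\tau_\Gamma}(\omega))$ is continuous at $\omega$ in the Skorokhod topology, so by the continuous mapping theorem $(\tau_\Gamma^s,X^s_{\tau_\Gamma^s})\Rightarrow(\tau_\Gamma^\alpha,X^\alpha_{\tau_\Gamma^\alpha})$.

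Next, fix $\epsilon\in(0,t)$ and split the correction into $A_s(\epsilon)+B_s(\epsilon)$ according to whether $\tau_\Gamma^s\le t-\epsilon$ or $t-\epsilon<\tau_\Gamma^s\le t$. On the support of $A_s(\epsilon)$ the integrand $(u,z)\mapsto p_{t-u}^s(z-y)$ is uniformly bounded by $p_\epsilon^s(0)\lesssim 1$ (via Lemma \ref{lem:hks_est} and \eqref{eq:36}) and converges pointwise to $p_{t-u}^\alpha(z-y)$ by Lemma \ref{lem:hk_conv}; a Fourier-inversion equicontinuity argument based on {\bf A3} upgrades this to locally uniform convergence on $[0,t-\epsilon]\times\Rd$. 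Combining this with the weak convergence of the exit pair via a standard bounded-continuous-plus-tightness version of the Portmanteau theorem gives $A_s(\epsilon)\to A_\alpha(\epsilon)$ as $s\to\infty$, while $A_\alpha(\epsilon)$ approaches the full $\alpha$-correction as $\epsilon\to 0^+$ by monotone convergence, using $\P_x(\tau_\Gamma^\alpha=t)=0$.

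The main obstacle is the uniform-in-$s$ estimate of $B_s(\epsilon)$, where the heat kernel $p_{t-\tau}^s$ degenerates as $\tau\to t$. The crucial observation is that $X^s_{\tau_\Gamma^s}\in\Gamma^c$ while $y\in\Gamma$, so $|X^s_{\tau_\Gamma^s}-y|\ge\delta_\Gamma(y)>0$. Together with the off-diagonal bound $p_u^s(z)\lesssim u\,\nu^s(z)$ (valid for all $s\ge 1$ and $u>0$ away from the origin, by \cite[Theorem 5.4]{TGMRBT} and Proposition \ref{prop:K}) and the radial monotonicity of $\nu^s$, this yields
\begin{equation*}
p_{t-\tau_\Gamma^s}^{s}\bigl(X^s_{\tau_\Gamma^s}-y\bigr)\ \lesssim\ (t-\tau_\Gamma^s)\,\nu^s\bigl(\delta_\Gamma(y)\bigr).
\end{equation*}
Since $\nu^s(\delta_\Gamma(y))\to\nu^\alpha(\delta_\Gamma(y))$ by \eqref{eq:nu_conv}, the right-hand side is bounded uniformly in $s$ large, and hence $B_s(\epsilon)\lesssim\epsilon$ uniformly in $s\ge s_0$. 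Passing first $s\to\infty$ and then $\epsilon\to 0^+$ completes the proof.
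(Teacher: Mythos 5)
Your proposal follows essentially the same route as the paper: Hunt's formula, a split of the correction term at $\tau_\Gamma^s\le t-\epsilon$ versus $t-\epsilon<\tau_\Gamma^s\le t$, equicontinuity from Fourier inversion for the bulk, and weak convergence of $\bfX^s$ in $\calD(\Rd)$ combined with Skorokhod-continuity of the exit functionals (Proposition~\ref{prop:continuity}, Corollary~\ref{cor:boundary}) to pass to the limit. The overall structure, including the crucial observation that $|X^s_{\tau_\Gamma^s}-y|\ge\delta_\Gamma(y)>0$, matches the paper's argument.

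There is one hypothesis mismatch worth flagging. To control the near-$t$ part $B_s(\epsilon)$ you invoke the off-diagonal bound $p_u^s(z)\lesssim u\,\nu^s(z)$ obtained from \cite[Theorem~5.4]{TGMRBT} together with Proposition~\ref{prop:K}. The direction of Proposition~\ref{prop:K} you need, namely $K(r)r^{-d}\lesssim\nu(r)$, requires {\bf A1} — but this lemma is deliberately stated under {\bf A2} and {\bf A3} only (and the paper's proof respects that). The paper sidesteps $\nu$ entirely, bounding $p_t^s(u)$ by $C s t\,\psi(\psi^{-1}(1/s)/|u|)\,|u|^{-d}$ via \cite[Corollary~7]{KBTGMR-jfa} and \eqref{eq:64}, then applying Potter's bounds to $\psi$ (this is \eqref{e:pts}), which uses only {\bf A2}. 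Your argument can be repaired without strengthening the hypotheses by stopping at the bound $p^s_{t-\tau}(z)\lesssim (t-\tau)\,K^s(|z|)\,|z|^{-d}\le (t-\tau)\,h^s(|z|)\,|z|^{-d}$ and then controlling $h^s(\delta_\Gamma(y))\lesssim\psi^{*,s}(1/\delta_\Gamma(y))\lesssim\psi^s(1/\delta_\Gamma(y))\to\delta_\Gamma(y)^{-\alpha}$ via \eqref{eq:52}, \eqref{eq:64} and \eqref{eq:psi_reg}; that route needs no {\bf A1}. As written, though, your proof proves the statement only under the stronger hypothesis {\bf A}.
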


\begin{proof}
	Fix $t>0$ and $x,y \in \Gamma$. By \cite[Corollary 7]{KBTGMR-jfa}, \eqref{eq:64} and \eqref{eq:psis}, we have that
	\[   
	p_t^s(u) 
	\leq C\frac{s t \psi \left( \psi^{-1}( 1/s ) / |u| \right)}{|u|^d}.
	\]
	Let $\delta=\dist(y,\Gamma^c)$. Since $\psi \in \calR_{\alpha}^0$, from Potter's theorem 
	there is $s_0$ such that 
	\[ s \psi \left( \psi^{-1}(1/s) / |u| \right) \leq 2|u|^{- \alpha/2
		}, \quad 
	|u| \geq \delta,
	\]
	if only $s>s_0$. Therefore	for $s$ large enough,
	\begin{equation}
		\label{e:pts}
		p_t^s (u) \leq \frac{C t }{|u|^{d+\alpha/2}}, \quad |u| \geq \delta.
	\end{equation}
	Next, note that $\left| X_{\tau_{\Gamma}} - y \right| \geq \dist(y,\, \Gamma^c)$. It follows that
	\begin{equation}\label{eq:60}
	\E_x \left[ t- \tau_{\Gamma} \leq \epsilon;p_{t - \tau_{\Gamma}}^s(X_{\tau_{\Gamma}}, y) \right]
	\end{equation}
	is uniformly small provided $\epsilon$ is
	small enough. We have also from \eqref{e:pts} that
	\begin{equation}
		\label{e:largeR}
		\E_x^s \left[ |X_{\tau_{\Gamma}}| > R;p_{t - \tau_{\Gamma}}^s(X_{\tau_{\Gamma}},\, y)	 \right] \to 0 \quad \text{as\ } R \to \infty 
	\end{equation}
	uniformly in $s$ large enough. 	Recall that by the proof of Lemma \ref{lem:hk_conv},
	
	\[ 
	p_t^s(x) = \frac{1}{(2 \pi)^d} \int_{\R^d} e^{-i \xi \cdot x } e^{-t \psi^s(\xi)} \, d \xi 
	\]
	for $s$ large enough. Then,
	\[ 
	\left| \frac{\partial}{\partial t} p_t^s(x) \right| + \left| \nabla p_t^s(x) \right| \leq \frac{1}{(2 \pi)^d} \int_{\R^d} e^{-t \psi^s(\xi)} \left( \psi^s(\xi) + |x| |\xi| \right) \,d \xi, 
	\]
	is uniformly bounded for $\epsilon \leq t \leq T$ and $|x| \leq R$ by the
	same arguments as in the proof of Lemma \ref{lem:hk_conv}. Therefore the functions $\{p_t^s(\,\cdot\,)\colon s \geq s_0\}$ are
	equicontinuous for $\epsilon \leq t \leq T$, $|x| \leq R$ and some $s_0 \geq 1$, hence
	\[ \lim_{s \to \infty} p_t^s (x) = p_t^{\alpha}(x) \]
	uniformly in $\epsilon \leq t \leq T$ and $|x| \leq R$. 
	This and \eqref{e:largeR} imply that
	\begin{equation}\label{eq:61} 
	\E_x \left[ \tau^s_{\Gamma} < t - \epsilon;p_{t - \tau_{\Gamma}}^s \left( X^s_{\tau^s_{\Gamma}},\, y \right) \right] - \E_x \left[ \tau^s_{\Gamma} < t - \epsilon;p_{t -
		\tau^s_{\Gamma}}^{\alpha} \left( X^s_{\tau^s_{\Gamma}},\, y \right) \right] \to 0 
	\end{equation}
	as $s \to \infty$. Now, in view of \eqref{eq:psi_reg} and 
	 Jacod and Shiryaev \cite[Corollary VII.3.6]{JS87}, the distribution of $\bfX^s$ converges weakly in $\calD_{[0,T]}$ to the distribution of $\bfX^{\alpha}$. Thus, by Proposition~\ref{prop:continuity} and Corollary~\ref{cor:boundary} we have for every $\epsilon > 0$ that 
	\begin{equation}\label{eq:62} 
	\E_x \left[ \tau^s_{\Gamma} < t - \epsilon;p_{t -
		\tau^s_{\Gamma}}^{\alpha} \left( X^s_{\tau^s_{\Gamma}}, y \right) \right]
	\to \E_x \left[\tau^\alpha_{\Gamma} < t - \epsilon; p_{t -
		\tau^\alpha_{\Gamma}}^{\alpha} \left( X^\alpha_{\tau^\alpha_{\Gamma}}, y \right) \right]
	\end{equation}
	as $s \to \infty$. Therefore, combining \eqref{eq:60}, \eqref{e:largeR}, \eqref{eq:61} and \eqref{eq:62} we arrive at
	\[ 
	\E_x \left[ \tau^s_{\Gamma} < t;p_{t - \tau^s_{\Gamma}}^s \left( X^s_{\tau^s_{\Gamma}}, y \right) \right]	\to	\E_x \left[ \tau^\alpha_{\Gamma} < t;p_{t - \tau^\alpha_{\Gamma}}^\alpha \left(	X^\alpha_{\tau^\alpha_{\Gamma}}, y \right) \right],
	\]
	and by the Hunt formula \eqref{eq:Hunt} with Lemma \ref{lem:hk_conv},
	\[ 
	\lim_{s \to \infty} p_t^{\Gamma,\, s}(x, y) = p_t^{\Gamma,\, \alpha}(x,y).
	\]
\end{proof}
\begin{lemma}\label{lem:dhks_est}
	Assume {\bf A}. For all $s \geq 1$ we have
	\[
	p_t^{\Gamma,s}(x,y) \approx \P_x(\tau^s_\Gamma>t)\P_y(\tau^s_\Gamma>t)p^s_t(y-x),\quad x,y\in \Gamma, t\geq T_2/s,
	\]
	where the comparability constant is taken from Lemma \ref{lem:dhk_est}.
\end{lemma}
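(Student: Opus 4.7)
The proof is a direct scaling argument, reducing to Lemma~\ref{lem:dhk_est} via the scaling identities already established in this section. The plan is to apply Lemma~\ref{lem:dhk_est} to the base process $\bfX$ at time $st \geq T_2$ and at the rescaled spatial points $x' := x/\psi^{-1}(1/s)$ and $y' := y/\psi^{-1}(1/s)$, and then translate the resulting inequality to $\bfX^s$ using \eqref{eq:dhks}, \eqref{eq:survs}, and \eqref{eq:hk}.

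Concretely, since $\Gamma$ is a cone with vertex at the origin and $\psi^{-1}(1/s) > 0$, we have $x \in \Gamma$ if and only if $x' \in \Gamma$, and likewise for $y$. The hypothesis $t \geq T_2/s$ is equivalent to $st \geq T_2$, so Lemma~\ref{lem:dhk_est} gives
\[
p_{st}^{\Gamma}(x',y') \approx \P_{x'}(\tau_{\Gamma}>st)\,\P_{y'}(\tau_{\Gamma}>st)\,p_{st}(y'-x'),
\]
with a comparability constant independent of $s$, $x$, $y$ and $t$. Now multiplying both sides by $(\psi^{-1}(1/s))^{-d}$ and applying \eqref{eq:dhks} on the left and \eqref{eq:hk} to the factor $p_{st}(y'-x')$ on the right converts these quantities into $p_t^{\Gamma,s}(x,y)$ and $p_t^s(y-x)$, respectively, while \eqref{eq:survs} rewrites the survival probabilities as $\P_x(\tau_{\Gamma}^s>t)$ and $\P_y(\tau_{\Gamma}^s>t)$. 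The comparability constant is unaffected by any of these substitutions because it depends neither on the spatial points nor on the time parameter in Lemma~\ref{lem:dhk_est}, which yields the claim with the same constant.

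I do not anticipate any serious obstacle: the only thing to check is that the $(\psi^{-1}(1/s))^{-d}$ prefactors match up on the two sides of the comparison, which they do by design of the scaling \eqref{eq:hk}--\eqref{eq:dhks}. The lemma is essentially a reformulation of Lemma~\ref{lem:dhk_est} under the substitution $(x,y,t) \mapsto (x',y',st)$, and the uniformity in $s \geq 1$ is automatic.
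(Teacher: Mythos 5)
Your proof is correct and is essentially identical to the paper's: both apply Lemma~\ref{lem:dhk_est} to $\bfX$ at time $st\geq T_2$ and rescaled spatial arguments, then translate back to $\bfX^s$ via the scaling identities \eqref{eq:hk}, \eqref{eq:survs}, \eqref{eq:dhks}, noting that the constant from Lemma~\ref{lem:dhk_est} does not depend on the spatial points or the time once $st\geq T_2$, hence is uniform in $s$.
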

\begin{proof}
	The case $s=1$ follows from Lemma \ref{lem:dhk_est}. Now we use the same extension technique as in the proof of Lemma \ref{lem:hks_est}. By \eqref{eq:hk}, \eqref{eq:survs}, \eqref{eq:dhks} and Lemma \ref{lem:dhk_est}, for all $t \geq T_2/s$,
	\begin{align*}
	p_t^{\Gamma,s}(x,y) &= \frac{1}{\left(\psi^{-1}(1/s)\right)^d}p_{ts}^{\Gamma} \left( \frac{x}{\psi^{-1}(1/s)}, \frac{y}{\psi^{-1}(1/s)} \right) \\ &\approx \frac{1}{\left(\psi^{-1}(1/s)\right)^d} \P_{x/\psi^{-1}(1/s)} (\tau_{\Gamma}>st) \P_{y/\psi^{-1}(1/s)} (\tau_{\Gamma}>st) p_{st} \left( \frac{y-x}{\psi^{-1}(1/s)} \right) \\ &= \P_x(\tau_{\Gamma}^s>t)\P_y(\tau_{\Gamma}^s>t)p_t^s(x,y),
	\end{align*}
	with the implied constant independent of $s$.
\end{proof}
\begin{proposition}
	\label{prop:green}
	Assume {\bf A}. If $d\geq 2$ then for every $x \in \Rd$,
	\begin{equation}\label{eq:U_conv}
	\lim_{s \to \infty} U^s(x) = U^{\alpha}(x).
	\end{equation}
	Furthermore, for $d\geq 1$,
	\[
	\lim_{s \to \infty} G^s_{\Gamma}(x,\, y) =G^{\alpha}_{\Gamma}(x, y),
	\] 
	for  all $x$, $y \in \Gamma$.
\end{proposition}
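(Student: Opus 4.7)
The plan is to represent both kernels as time integrals,
\[
U^s(x)=\int_0^\infty p_t^s(x)\,dt,\qquad G_\Gamma^s(x,y)=\int_0^\infty p_t^{\Gamma,s}(x,y)\,dt,
\]
and to pass the limit through the integral using the pointwise convergences $p_t^s(x)\to p_t^\alpha(x)$ from Lemma \ref{lem:hk_conv} and $p_t^{\Gamma,s}(x,y)\to p_t^{\Gamma,\alpha}(x,y)$ from Lemma \ref{prop:dhk}. The whole effort reduces to producing an $s$-uniform, $t$-integrable dominating function.

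For $d\geq 2$, the trivial bound $p_t^{\Gamma,s}(x,y)\leq p_t^s(y-x)$ reduces matters to finding such a dominating function for $p_t^s(y-x)$, which also serves for $U^s$. I would use two bounds on $p_t^s$ in tandem. First, the universal estimate $p_u(z)\lesssim u\nu(z)$, valid for all $u>0$ and $z\neq 0$ under \textbf{A1} (from \cite[Theorem~5.4]{TGMRBT} combined with Proposition~\ref{prop:K}), together with the scaling identity \eqref{eq:hk}, yields $p_t^s(x)\leq c\,t\nu^s(x)$ with $c$ independent of $s$. By \eqref{eq:nu_conv}, $\nu^s(x)\leq 2\nu^\alpha(x)$ for $s$ sufficiently large, so on $(0,1]$ the integrand is dominated by $c'\,t\,|x|^{-d-\alpha}$. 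Second, Lemma~\ref{lem:hks_est} gives $p_t^s(x)\lesssim(\psi^{-1,s}(1/t))^d$ once $t\geq T_1/s$, and by \eqref{eq:psi_invs} combined with Potter's bounds for $\psi^{-1}$ (regularly varying at $0$ with index $1/\alpha$), for any small $\epsilon>0$ and $s$ sufficiently large,
\[
\psi^{-1,s}(1/t)=\frac{\psi^{-1}(1/(st))}{\psi^{-1}(1/s)}\leq C\,t^{-1/\alpha+\epsilon},\quad t\geq 1.
\]
Since $d\geq 2>\alpha$, one may choose $\epsilon$ so small that $t^{-d/\alpha+d\epsilon}$ is integrable on $[1,\infty)$. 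Splicing these two estimates produces the required dominating function and dominated convergence finishes both $U^s$ and $G_\Gamma^s$ in this range of $d$.

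The remaining case $d=1$ requires exploiting the cone killing, since the ambient potential is no longer integrable in $t$. Here I would use the sharp estimate of Lemma~\ref{lem:dhks_est},
\[
p_t^{\Gamma,s}(x,y)\lesssim \P_x(\tau_\Gamma^s>t)\,\P_y(\tau_\Gamma^s>t)\,p_t^s(y-x),
\]
together with a uniform-in-$s$ polynomial decay bound on the survival probability. The latter can be extracted from the scaling identity \eqref{eq:survs} combined with the half-line Green function bound of Lemma~\ref{lem:Ub} and the identity \eqref{eq:surv_id}, noting that $\bfX^s$ satisfies \textbf{A1} with the same constant $M$ as $\bfX$ and that $h^s\to h^\alpha$ pointwise by Proposition~\ref{prop:nu_h_conv}, so the bound from Lemma~\ref{lem:Ub} is stable in $s$. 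Coupling this uniform decay of the survival factor with the short-time bound $p_t^s(y-x)\lesssim t\nu^s(y-x)$ yields an $s$-uniform integrable majorant for $p_t^{\Gamma,s}(x,y)$.

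The main obstacle is precisely the one-dimensional Dirichlet case: one must carefully extract the $s$-uniform polynomial decay of $\P_x(\tau_\Gamma^s>t)$, reconciling the scaling $\P_x(\tau_\Gamma^s>t)=\P_{x/\psi^{-1}(1/s)}(\tau_\Gamma>st)$ with the tail asymptotics of the original process via Lemma~\ref{lem:Ub} and \eqref{eq:surv_id}. Once this uniform decay is secured, the rest is a routine application of dominated convergence, supported by the pointwise convergences and the scaling identities already established.
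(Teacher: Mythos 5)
Your strategy for $d\ge 2$ is sound and, while it differs in organization from the paper's proof, it arrives at the same conclusion. The paper decomposes $G_\Gamma^s$ via Hunt's formula, $G_\Gamma^s(x,y)=U^s(x,y)-\E_x\big[U^s(X^s_{\tau_\Gamma},y)\big]$, proves that $U^s\to U^\alpha$ \emph{uniformly on compacts of $\Rd\setminus\{0\}$} by exactly the three-piece $t$-integral splitting you have in mind, and then handles the expectation by combining that uniform convergence with the Skorokhod weak convergence of $\bfX^s\to\bfX^\alpha$ and the decay of $U^\alpha$ at infinity. You instead dominate $p_t^{\Gamma,s}(x,y)\le p_t^s(y-x)$ and pass the limit through the time integral directly, invoking Lemma~\ref{prop:dhk} for the pointwise convergence of the integrand. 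That is legitimate and arguably leaner for the Green function; both routes ultimately rest on the same Skorokhod-continuity input (via Lemma~\ref{prop:dhk} in your case, via the handling of $\E_x[U^s(X^s_{\tau_\Gamma},y)]$ in the paper's). Your dominating function is constructed correctly: $p_t^s\lesssim t\nu^s$ (constant independent of $s$ by {\bf A1} and scaling) with $\nu^s\le 2\nu^\alpha$ pointwise for large $s$ controls the short times, and $p_t^s\lesssim(\psi^{-1,s}(1/t))^d\lesssim t^{-d/\alpha+d\epsilon}$ controls the long times; the latter is integrable for $d\ge 2$ and $\epsilon$ small since $\alpha<2$.

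For $d=1$, however, there is a genuine gap. You correctly identify that one needs an $s$-uniform polynomial decay of $\P_x(\tau_\Gamma^s>t)$ in $t$ in order to build an integrable majorant of $p_t^{\Gamma,s}(x,y)$ for large $t$, but the combination you propose --- \eqref{eq:survs}, \eqref{eq:surv_id}, and Lemma~\ref{lem:Ub} --- does not deliver it. The identity \eqref{eq:surv_id}, $\P_x(\tau_\Gamma^s>t)=G_\Gamma^s P_t^{\Gamma,s}\kappa_\Gamma^s(x)$, expresses the survival probability as a Green potential of $P_t^{\Gamma,s}\kappa_\Gamma^s$; bounding $G_\Gamma^s$ via Lemma~\ref{lem:Ub} still leaves you with an unquantified $t$-dependence of $P_t^{\Gamma,s}\kappa_\Gamma^s$, so no $t^{-\gamma}$ decay comes out of this route. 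What is actually needed is a direct, $s$-uniform tail bound on the half-line survival probability; the paper obtains this from \cite[Proposition~2.6]{KBTGMR-ptrf} (a renewal-function estimate, stable in $s$ because the {\bf A1} constants are), leading to
\[
p_t^{\Gamma,s}(x,y)\lesssim \frac{1}{\sqrt{h^s(x)h^s(y)}}\,\frac{\psi^{-1,s}(1/t)}{t}\lesssim (xy)^{\alpha/2}\, t^{-1/\alpha-1+\eta},\qquad t\ge 1,
\]
which together with the short-time bound \eqref{e:pts} permits dominated convergence. You would need to substitute that ingredient for your proposed one; without it, the $d=1$ argument does not close.
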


\begin{proof}
	The first claim is an easy consequence of \cite[Corollary 3]{WCTGBT} for $d\geq 3$. Since $\alpha<2$, one can obtain the similar result also for $d=2$. The proof of the second claim is similar to that of Lemma~\ref{prop:dhk}. Recall that we adopt the notation $U^s(x,\, y) 
	= U^s(x-y) = \int_0^{\infty} p^s_t(x - y)\,d t$. Then by the Hunt formula we have 
	\begin{equation}
		\label{e:GreenG}
		G^s_{\Gamma}(x,\, y) = U^s(x,\, y) - \E_x^s \left[ U^s \left( X_{\tau_{\Gamma}},\, y \right) \right]. 
	\end{equation}
	
	We first refine \eqref{eq:U_conv} and prove that $U^s(u)$ in fact converges uniformly on compact subsets of $\R^d \setminus \{0\}$ to
	$U^{\alpha}(u)$ as $s \to \infty$. By \eqref{e:pts}, we have that
	\begin{equation}
		\label{e:smallt}
		\lim_{\epsilon \to 0} \int_0^\epsilon p_t^s(u) \,d t = 0 
	\end{equation}
	uniformly in $|u| \geq \delta$. Next, note that Lemma \ref{lem:hks_est} implies, for $T \geq T_1/s$,
	\[ \int_T^{\infty} p_t^s(u) \,d t \lesssim 
	\int_T^{\infty} \big(\psi^{-1,s}(1/t)\big)^{d} \,d t.\]
	Since $\psi^{-1} \in \calR_{1/\alpha}^0$, using \eqref{eq:psi_invs} and Potter's bounds for every $\eta>0$, we get the existence of $\delta>0$ with
	\begin{equation}\label{eq:tg4}\frac{\psi^{-1,s}(1/t)}{\psi^{-1,s}(1/u)}=\frac{\psi^{-1}(1/(st))}{\psi^{-1}(1/(su))}\leq 2\left(\frac{u}{t}\right)^{1/\alpha-\eta}, \quad t\geq u\geq 1/(s\delta). \end{equation}
		Hence,
	\[ \int_T^{\infty} p_t^s(u) \,d t \lesssim  T\big(\psi^{-1,s}(1/T)\big)^{d}\leq 2 T\big(\psi^{-1,s}(1)\big)^{d} T^{-d/\alpha+d\eta}\leq 4  T^{-d/\alpha+1+d\eta},\]
	which tends to $0$ as $T \to \infty$ uniformly in $u$, provided $d \geq 2$ and $\eta< 1/\alpha-1/d$.

	For $\epsilon \leq t \leq T$, we have from the proof of	Lemma~\ref{prop:dhk} that $\lim_{s \to \infty} p_t^s(u) = p^{\alpha}_t(u)$ uniformly in $\epsilon \leq t \leq T$ and $|u| \leq R$. It follows that	uniformly in $|u| \leq R$,
	\begin{equation}
		\label{e:middlet}
		\lim_{s \to \infty} \int_{\epsilon}^T p_t^s(u) \,dt = \int_{\epsilon}^T p_t^{\alpha}(u) \,d t.
	\end{equation}
 Combining \eqref{e:smallt} -- \eqref{e:middlet}, we get the  claimed uniform convergence.
	
	Now, 
	proceeding as in the proof of Lemma \ref{prop:dhk} we conclude that for every $R>2|y|$,
	\[
\lim_{s \to \infty} \E_x \big[ |X^s_{\tau^s_{\Gamma}}|\leq R; U^s ( X^s_{\tau^s_{\Gamma}},y ) \big] = \E_x \big[ |X^{\alpha}_{\tau^{\alpha}_{\Gamma}}|\leq R; U^{\alpha} ( X^{\alpha}_{\tau_{\Gamma}},y ) \big].
	\]
	The unimodality of the potential kernel and the fact that $\lim_{|u| \to \infty}U^{\alpha}(u)=0$ ends the proof in the case $d\geq 2$.
	
	If $d=1$, then $\Gamma=(0,\infty$); therefore, by Lemma \ref{lem:dhks_est} and \ref{lem:hks_est} together with \cite[Proposition 2.6]{KBTGMR-ptrf} and \eqref{eq:52},
	\[p_t^\Gamma(x,y)\lesssim \frac{1}{\sqrt{h^s(x)h^s(y)}}\frac{1}{t}\psi^{-1,s}(1/t), \quad x,y>0,\,\, t\geq T_2/s. \]
	Hence, for sufficiently large $s$, we have, by Proposition \ref{prop:nu_h_conv} and \eqref{eq:tg4},
	\[p_t^\Gamma(x,y)\lesssim (xy)^{\alpha/2}t^{-1/\alpha-1+\eta}, \quad x,y>0,\,\, t\geq1. \]
	This together with \eqref{e:pts} allow us to use the dominated convergence theorem and the claim in this case follow by Lemma \ref{prop:dhk}.

\end{proof}

\subsection{Uniform BHP and estimates of harmonic functions}\label{s.ubhp}
Let us turn to the boundary Harnack inequality and its consequences for harmonic functions. A very general version of BHP was proved by Bogdan, Kumagai and Kwaśnicki \cite{BKK15}. It was later simplified in \cite{TGMK} in the case of unimodal L\'{e}vy processes. Therefore, we will rather use the results from \cite{TGMK}, as they better serve our purpose. Next, using  ideas from \cite{KJ17} we will show that the boundary limits of ratios of harmonic functions exist and are uniform in the sense of \cite[Remark 3]{KJ17}. This observation will be crucial in the proof of Lemma \ref{lem:Mkernel}, which in turn is essential for Lemma \ref{lem:Gfs_limit} to hold.

First we prove the \emph{uniform} boundary Harnack inequality and generalize \cite[Remark 3]{KJ17} to the whole \emph{family} of L\'{e}vy processes $\big\lbrace {\mathbf X}^s \colon s\geq 1 \big\rbrace$. To this end we closely examine assumptions imposed in \cite{BKK15}, \cite{TGMK} and \cite{KJ17} and verify that constants appearing therein are in fact independent of $s$. First, let us show that the boundary Harnack inequality holds with the same constant for every ${\mathbf X}^s$. Recall that {\bf A1} holds true for all $\bfX^s$ for all $s \geq 1$ with the same parameters $M$ and $\beta$. Therefore, by Proposition \ref{prop:BHP}, $\cbhi$ is also independent on $s$. In particular, the \emph{uniform} boundary Harnack inequality holds: for all $s \geq 1$, $x_0 \in \partial D$ and two functions $f^s,g^s \geq 0$ which are regular harmonic with respect to $\bfX^s$ in $D \cap B(x_0,2r)$ and vanish on $D^c \cap B(x_0,2r)$,
	\begin{equation}\tag{BHP}\label{eq:BHP}
		\frac{f^s(x)}{g^s(x)} \leq \cbhi \frac{f^s(y)}{g^s(y)}, \quad x,y \in D \cap B(x_0,r),
	\end{equation}
	with $\cbhi =\cbhit^4$. For the existence of the boundary limits we verify the assumptions of \cite[Theorem 2]{KJ17}. Assumption (i) therein is satisfied for every unimodal L\'{e}vy process. Our goal is to prove that (ii) and (iii) hold uniformly in $s$. First, we check that for every $R>0$, $\cnu^s(r,R)$ satisfies
	\[
	\nu^s(t-r) \leq \cnu^s(r,R)\nu^s(t+r),
	\]
	for all $t>R$, may be chosen independent of $s$ (which justifies writing $\cnu^s=\cnu$). 
	Indeed, assume {\bf A1} and {\bf A2}. We  clearly have $\cnu^s(r,R) \geq 1$ for all $0<r<R$ and $s \geq 1$. Furthermore,
	\begin{equation*}
		\cnu^s(r,R) = \sup_{t \geq R} \frac{\nu^s(t-r)}{\nu^s(t+r)} \leq M \sup_{t \geq R} \bigg( \frac{t+r}{t-r} \bigg)^{d+\beta} =  \bigg( \frac{R+r}{R-r} \bigg)^{d+\beta}M,
	\end{equation*}	
	which proves the independence of the constant from $s$. In particular, by setting $R=2r$ we see that the condition (iii) holds uniformly in $s$. 
Finally, we note that condition (iv) actually holds independently of $s$ since the only ingredient in the proof of \cite[Theorem 1.11]{TGMK} is the scaling condition {\bf A1}, which is independent of $s$.
	
	Therefore, by \cite[Theorem 2 and Remark 1]{KJ17} we have the following.
        \begin{theorem} \label{thm:uni_lim} 
		Assume {\bf A}. Let $D$ be an open Lipschitz set, $x_0\in \partial D$ and $R>0$.  Then 
		$$\lim_{r\to 0^+}\sup_{s \geq 1} \sup_{f^s,g^s}\frac{\sup_{x\in D\cap B(x_0,r)}f^{s}(x)/g^{s}(x)}{\inf_{x\in D\cap B(x_0,r)}f^{s}(x)/g^{s}(x)}=1,$$
                where $\sup_{f^s,g^s}$ is taken over all non-negative functions $f^{s}, \, g^{s}$ that are  regular harmonic  in $D \cap B(x_0, R)$ with respect to  $\bfX^s$ and are equal to zero in $B(x_0, R) \setminus D$.
        \end{theorem}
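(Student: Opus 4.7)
The plan is to transport the boundary-limit result of Kwa\'{s}nicki and Juszczyszyn \cite[Theorem 2 and Remark 1]{KJ17} from a single unimodal L\'evy process to the whole family $\{\bfX^s : s\ge 1\}$ by observing that every quantitative hypothesis in that theorem depends on the process only through constants that are stable under the rescaling $X \mapsto \psi^{-1}(1/s)X_{s\,\cdot}$. Most of this bookkeeping has already been done in the paragraphs preceding the statement, so the proof is essentially a single invocation, with a careful reading of constants.

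First I would record that {\bf A1} is preserved by rescaling: the explicit formula \eqref{eq:nus} gives $\nu^s(r_1)/\nu^s(r_2) \le M (r_1/r_2)^{-d-\beta}$ with the \emph{same} pair $(M,\beta)$ for every $s\ge 1$. Consequently Proposition \ref{prop:BHP} applied to $\bfX^s$ yields the boundary Harnack inequality \eqref{eq:BHP} with constant $\cbhi = \cbhit^4$ independent of $s$. Second, I would check the remaining standing hypotheses (i)--(iv) of \cite[Theorem 2]{KJ17}. Hypothesis (i) is pure-jump isotropic unimodality, which is inherited by every $\bfX^s$. Hypothesis (iii), the L\'evy-measure comparability $\nu^s(t-r)\le \cnu \, \nu^s(t+r)$ for $t\ge 2r$, holds with $\cnu \le M\bigl((R+r)/(R-r)\bigr)^{d+\beta}$, as computed just above the statement; this bound depends only on $d,M,\beta,R,r$, so $\cnu$ can be chosen independently of $s$. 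Hypothesis (iv), the scale-invariant Harnack inequality from \cite{TGMK}, has all its constants determined by the pair $(M,\beta)$ of {\bf A1}, hence is again uniform in $s$.

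Having verified that every constant entering \cite[Theorem 2]{KJ17} depends only on $d, M, \beta$ and the characteristics of $D$, I would apply the theorem simultaneously to each $\bfX^s$. Remark 1 of \cite{KJ17} promotes the pointwise boundary limit $f^s/g^s \to \ell^s$ to the uniform ratio form $\sup_{D\cap B(x_0,r)}(f^s/g^s) / \inf_{D\cap B(x_0,r)}(f^s/g^s) \to 1$ as $r\to 0^+$, with a modulus of convergence expressed purely in terms of the $s$-independent constants identified above. Taking supremum over $s\ge 1$ inside the limit is therefore legitimate, and yields exactly the stated conclusion.

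The main obstacle is not computational but careful bookkeeping: one must ensure that \emph{every} constant silently introduced in the proofs of \cite{KJ17} and in the prerequisites cited there (notably \cite{BKK15}, \cite{TGMK}) is of the form $c = c(d,M,\beta)$ and never depends on quantities like $\psi^{-1}(1/s)$ or on the tail of $\nu^s$ at zero. Because the rescaling leaves $(M,\beta)$ untouched and only shifts the natural scale, this dependence check is the genuine content of the theorem, while the limiting statement itself follows immediately.
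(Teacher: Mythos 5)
The proposal has a genuine gap. You verify hypotheses (i), (iii), and (iv) of \cite[Theorem 2]{KJ17} uniformly in $s$, but hypothesis (ii) is silently skipped, and this is precisely where the technical work of the paper's proof lives. The paper does not verify (ii) directly; instead it shows that $x_0$ is an \emph{accessible} boundary point in the sense of \cite[Remark 1]{KJ17} (which renders (ii) redundant), and then proves that the accessibility estimate is \emph{uniform} over $s\geq 1$. Concretely, accessibility requires the quantity
\[
\int_{D\cap B_R\setminus B_\delta}\E_y\tau^s_{D\cap B_R}\,\nu^s(y)\,dy \;\gtrsim\; \ln\frac{h^s(2\delta)}{h^s(R/2)}
\]
to blow up as $\delta\to 0^+$, and the paper must show that this blow-up is uniform in $s$, i.e.\ that $\sup_{\delta<R}\inf_{s\geq1} h^s(2\delta)/h^s(R/2)=\infty$. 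This is established by a contradiction argument that splits into cases according to whether the near-optimal $s(\delta)$ stays bounded or not, using {\bf A2}, Potter's bounds and the pointwise convergence $h^s\to h^\alpha$ from Proposition \ref{prop:nu_h_conv}.

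This step is not a matter of observing that constants depend only on $(d,M,\beta)$: unlike the constants in (iii) and (iv), the ratio $h^s(2\delta)/h^s(R/2)$ genuinely depends on $s$ through the rescaling $\psi^{-1}(1/s)$, and the uniform lower bound is a nontrivial fact requiring {\bf A2}, not just {\bf A1}. Your closing sentence -- that the limiting statement ``follows immediately'' once the $(d,M,\beta)$-dependence is checked -- does not hold here, because the uniformity of accessibility does not reduce to such a check. Without this piece, the invocation of \cite[Theorem 2 and Remark 1]{KJ17} does not deliver a modulus of convergence that is independent of $s$, and the supremum over $s\geq 1$ cannot be taken inside the limit.
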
        
	\begin{proof}

			Without loss of generality we may and do assume that $x_0=0$. First of all, we show that  $x_0\in\partial D$ is an accessible boundary point (see \cite[Remark 1]{KJ17}) and then the proof will follow by inspection of the proof of Theorem 2 therein in the case of accessible boundary points in \cite[Section 4.3]{KJ17}. We remark that in this case the assumption (ii) of \cite[Theorem 2]{KJ17} is redundant.  Namely, since $D$ is a Lipschitz set, there exists $R>0$ and  an open right circular  cone $\tilde{\Gamma}$ with apex at $x_0$ such that $\tilde{\Gamma}_R\subset D$.  By  isotropy of $\bf{X}$  we may and do assume that the axis of $\tilde{\Gamma}$ is the line $\ell=\{t\one:t\in\mathbb{R}\}$. By $\kappa$-fatness of $\tilde{\Gamma}$ and due to the Pruitt bounds we have, for all $y\in\tilde{\Gamma_R}$ with $\mathrm{dist}(y, \ell)\leq \kappa/2 |y|$ that
\[ \E_y\tau^s_{D\cap B_R}\geq \E_y\tau^s_{\tilde{\Gamma_{R}}}\geq  \E_y\tau^s_{B(y,\kappa |y|/2)}\geq \frac{c(d)}{h^s(\kappa |y|/2)}\geq  \frac{c(d)\kappa^2}{4 h^s( |y|)}. \]
Hence, by Proposition \ref{prop:K} and the isotropy of $\bf{X}$, for large $s$ and small $\delta$,
\begin{align}\label{eq:tg7}\begin{aligned}
	\int_{D\cap B_R\setminus B_\delta}\E_y\tau^s_{D\cap B_R}\nu^s(y)dy&\geq c(d,M,\beta,\kappa) \int_{B_{R/2}\setminus B_{2\delta}}\frac{1}{h^s(|y|)}\frac{K^s(|y|)}{|y|^d}dy\\ &=c\int_{2\delta}^{R/2} \frac{1}{h^s(u)} \frac{K^s(u)}{u}\,du \\&=c(\ln h^s(2\delta)-\ln h^s(R/2)).\end{aligned}
\end{align}
Observe that, since $h^s$ blows up as $\delta \to 0^+$, $x_0$ is an accessible boundary point for $\bfX^s$. We will now prove that the blow-up is  uniform in $s$. Indeed, suppose that
\begin{equation}\label{eq:22}
\sup_{\delta < R} \inf_{s \geq 1}\frac{h^s(2\delta)}{h^s(R/2)}<\infty.
\end{equation}
Note that
\begin{equation}\label{eq:21}
\frac{h^s(2\delta)}{h^s(R/2)} = \frac{h\big(2\delta/\psi^{-1}(1/s)\big)}{h\big(R(2\psi^{-1}(1/s))\big)}.
\end{equation}
By Proposition \ref{prop:nu_h_conv} and continuity of $h$, for all small enough $\delta>0$ there is $s=s(\delta)$ with
\begin{equation*}
\limsup_{\delta \to 0^+} \frac{h^{s(\delta)}(2\delta)}{h^{s(\delta)}(R/2)}<\infty.
\end{equation*}
Moreover, we observe that the expression $\delta/\psi^{-1}(1/s(\delta))$ is bounded in $\delta$. Indeed, should the converse be true, \eqref{eq:21}, \eqref{eq:52} and \eqref{eq:64} would imply that
\[
\frac{h^{s(\delta)}(2\delta)}{h^{s(\delta)}(R/2)} \gtrsim \frac{\psi\big( \psi^{-1}(1/s(\delta))/\delta \big)}{\psi \big( \psi^{-1}(1/s(\delta))/R \big)} \gtrsim \delta^{-\alpha/2},
\]
where the last step follows by {\bf A2} together with Potter bounds on $\psi$, and we would arrive at a contradiction. Now, since $\psi^{-1}(1/s(\delta)) \leq 1$, there are two possible scenarios: either $s(\delta)$ is bounded or not. If it is bounded, then we observe that $h\big(R/(2\psi^{-1}(1/s))\big)$ in \eqref{eq:21} is bounded as well, but since $h\big( 2\delta/\psi^{-1}(1/s) \big)$ blows up, we get the contradiction with \eqref{eq:22}. If it is unbounded, then we infer that $h(2\delta/\psi^{-1}(1/s(\delta)))$ is bounded below by a positive constant, but $h\big(R/(2\psi^{-1}(1/s(\delta))\big)$ goes to $0$ as $\delta \to 0^+$ and again we get the contradiction. Hence, for every $R>0,$
\[
\sup_{\delta<R} \inf_{s \geq 1} \frac{h^s(2\delta)}{h^s(R/2)}=\infty,
\]
so $x_0$ is an accessible boundary point for $\bf{X}^s$ and the lower bounds above is independent of $s$. This implies that every constant that appears in Section 4.3 in \cite{KJ17} is in fact independent of $s$ and therefore the limit proven there is uniform with respect to $s$ and functions $f^{s}$ and $g^{s}$. 
	\end{proof}

\begin{lemma}\label{lem:upper_harm}
	Let $D$ be a $\kappa$-fat set. Assume {\bf A1}. There is a constant $C=C(d,M,\beta,\kappa)$ such that for all $s \geq 1$, $Q \in \partial D$, $r>0$ and non-negative functions $u^s$ regular harmonic in $D \cap B(Q,2r)$ with respect to $\bfX^s$ and vanishing on $D^c \cap B(Q,2r)$,
	\[
	u^s(x) \leq Cu^s(A_r(Q)), \quad x \in D \cap B(Q,r).
	\]
\end{lemma}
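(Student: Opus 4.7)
The plan is to deduce this upper Carleson-type estimate from the uniform boundary Harnack inequality (Proposition \ref{prop:BHP}) combined with Pruitt-type estimates for exit times. The crucial observation is that the BHP constant $\cbhit = \cbhit(d, M, \beta)$ is independent of $s \geq 1$, because the scaling assumption {\bf A1} is inherited by each $\nu^s$ with the same parameters $M$ and $\beta$ (as explicitly noted at the beginning of Section \ref{s.rp}). Similarly, Pruitt's estimates and the doubling property \eqref{eq:h_doubling} hold with universal constants for every unimodal L\'evy process.

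Concretely, Proposition \ref{prop:BHP} gives, for each $x \in D \cap B(Q, r)$,
\[
u^s(x) \stackrel{\cbhit}{\approx} \E_x \tau^s_{D \cap B(Q, 4r/3)} \cdot I^s, \qquad I^s := \int_{\Rd \setminus B(Q, 5r/3)} u^s(y)\, \nu^s(|y - Q|)\, dy,
\]
with $I^s$ independent of $x$. Dividing the inequality for a general $x \in D \cap B(Q, r)$ by the corresponding inequality for $A_r(Q)$ and cancelling $I^s$ yields
\[
\frac{u^s(x)}{u^s(A_r(Q))} \leq \cbhit^{\,2}\, \frac{\E_x \tau^s_{D \cap B(Q, 4r/3)}}{\E_{A_r(Q)} \tau^s_{D \cap B(Q, 4r/3)}}.
\]
(If $u^s(A_r(Q)) = 0$ then $I^s = 0$ by the lower BHP bound, forcing $u^s \equiv 0$ on $D\cap B(Q,r)$ and the claim is trivial.)

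Next I would bound numerator and denominator using Pruitt's estimates, whose constants depend only on $d$ and therefore apply uniformly to every $\bfX^s$. For the numerator, domain monotonicity together with the inclusion $D \cap B(Q, 4r/3) \subset B(x, 7r/3)$ for $x \in B(Q, r)$ gives
\[
\E_x \tau^s_{D \cap B(Q, 4r/3)} \;\leq\; \E_x \tau^s_{B(x, 7r/3)} \;\lesssim\; \frac{1}{h^s(7r/3)}.
\]
For the denominator, $\kappa$-fatness furnishes $B(A_r(Q), \kappa r) \subset D \cap B(Q, r)$, so $B(A_r(Q), \kappa r/3) \subset D \cap B(Q, 4r/3)$, whence
\[
\E_{A_r(Q)} \tau^s_{D \cap B(Q, 4r/3)} \;\geq\; \E_{A_r(Q)} \tau^s_{B(A_r(Q), \kappa r/3)} \;\gtrsim\; \frac{1}{h^s(\kappa r/3)}.
\]
Applying \eqref{eq:h_doubling} to $h^s$, one obtains $h^s(\kappa r/3)/h^s(7r/3) \leq (7/\kappa)^2$, so the ratio is bounded by a constant $C = C(d, M, \beta, \kappa)$ independent of $s$ and $r$.

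I do not expect any deep obstacle here; the proof is essentially bookkeeping. The only point requiring vigilance is the genuine $s$-uniformity of every constant invoked — the BHP constant $\cbhit$, the Pruitt constants, and the doubling constant of $h$ — which is the whole reason Proposition \ref{prop:BHP} and \eqref{eq:h_doubling} were recorded in their present process-independent form.
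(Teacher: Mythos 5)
Your proof is correct and follows essentially the same route as the paper's: apply Proposition \ref{prop:BHP} to reduce the ratio $u^s(x)/u^s(A_r(Q))$ to a ratio of exit times, control that ratio above and below by Pruitt's estimates using $\kappa$-fatness, and finish with the doubling property \eqref{eq:h_doubling}, with the key observation that every constant involved is $s$-uniform because {\bf A1} holds for all $\nu^s$ with the same $M,\beta$. The only (harmless) cosmetic differences are your use of slightly smaller inner balls (radius $\kappa r/3$ instead of $\kappa r$) and the explicit handling of the degenerate case $u^s(A_r(Q))=0$, which the paper omits.
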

\begin{proof}
By Proposition \ref{prop:BHP} and discussion before \eqref{eq:BHP} we get
\[\frac{u^s(x)}{u^s(A_r(Q))}\leq \cbhit^2 \frac{\E_x\tau_{D\cap B(Q,4r/3)}}{\E_{A_r(Q)}\tau_{D\cap B(Q,4r/3)}}.\]
By Pruitt's estimates,
\[\E_x\tau_{D\cap B(Q,4r/3)}\leq \frac{c(d)}{h(4r/3)} \]
and
\[\E_{A_r(Q)}\tau_{D\cap B(Q,4r/3)}\geq \frac{c(d)}{h(\kappa r)}. \]
An application of \eqref{eq:h_doubling} yields the claim.
\end{proof}

\subsection{Uniform integrability}\label{s.ui}
Since a dilation of a Lipschitz set is Lipschitz, by
\eqref{e.p1} we get $\P_x(X^s_{\tau_{D}-} = X^s_{\tau_{D}}) = 0$. Thus, by \eqref{eq:IW}, and analogous results for $\bfX^\alpha$ we have
\begin{eqnarray}\label{eq:IWs1}
	1&=&\int_0^\infty \int_{\Gamma}\int_{\Gamma^c} p_u^{\Gamma,s}(x,y)\nu^s(y,z)\,dz\,dy\,du\\
	&\xrightarrow{s\to\infty}&1=\int_0^\infty \int_{\Gamma}\int_{\Gamma^c} p_u^{\Gamma,\alpha}(x,y)\nu^\alpha(y,z)\,dz\,dy\,du\quad \mbox{as $s\to \infty$.}\nonumber
\end{eqnarray}
Here and below $x\in \Gamma$ is fixed but arbitrary, and we consider the integrands as functions parametrized by $s\to \infty$.
Due to Lemma~\ref{prop:dhk} and \eqref{eq:nu_conv}, the integrands converge, too,
so by Vitali's convergence theorem, the integrand in \eqref{eq:IWs1} is uniformly integrable, see, e.g., \cite[Chapter~22]{MR3644418}. Conversely, by Vitali's theorem and uniform integrability, for each bounded function $f$,
\begin{eqnarray}\label{eq:IWsf}\begin{aligned}
	&&\int_0^\infty \int_{\Gamma}\int_{\Gamma^c} p_u^{\Gamma,s}(x,y)f(u,y,z)\nu^s(y,z)\,dz\,dy\,du\\
	&\xrightarrow{s\to\infty}&\int_0^\infty \int_{\Gamma}\int_{\Gamma^c} p_u^{\Gamma,\alpha}(x,y)f(u,y,z)\nu^\alpha(y,z)\,dz\,dy\,du.\end{aligned}
\end{eqnarray}
For instance, taking arbitrary $t\ge 0$ and letting $f={\bf 1}_{u>t}$, we get
\begin{equation}\label{e.sp}
	\P_x(\tau_\Gamma^s>t)\to \P_x(\tau_\Gamma^\alpha>t) \quad \mbox{as $s\to \infty$.}
\end{equation}
\begin{lemma}\label{l.zpk}
	For all $x \in \Gamma$ and $t>0$,
	\[
	\lim_{s \to \infty} P_t^{\Gamma,s} \kappa_{\Gamma}^s(x) = P_t^{\Gamma,\alpha} \kappa_{\Gamma}^{\alpha}(x).
	\] 
\end{lemma}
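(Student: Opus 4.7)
The plan is to apply dominated convergence to
\[
P_t^{\Gamma,s}\kappa_\Gamma^s(x)=\int_\Gamma p_t^{\Gamma,s}(x,y)\kappa_\Gamma^s(y)\,dy.
\]
Pointwise convergence of the integrand for every $y\in\Gamma$ is already in hand: Lemma \ref{prop:dhk} gives $p_t^{\Gamma,s}(x,y)\to p_t^{\Gamma,\alpha}(x,y)$ and Proposition \ref{prop:nu_h_conv} gives $\kappa_\Gamma^s(y)\to\kappa_\Gamma^\alpha(y)$. The whole content of the proof is therefore to produce an integrable majorant that is uniform in $s$ beyond some threshold $s_0(t)$.

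For the heat kernel factor, combining Lemma \ref{lem:dhks_est} with Lemma \ref{lem:hks_est} yields, once $s\geq T_2/t$,
\[
p_t^{\Gamma,s}(x,y)\leq C\,\P_y(\tau_\Gamma^s>t)\bigl[(\psi^{-1,s}(1/t))^d\wedge t\nu^s(y-x)\bigr].
\]
The first term inside the minimum is uniformly bounded in $s\geq s_0(t)$ by \eqref{eq:36}. For the second, Potter's bounds on $\nu\in\calR_{-d-\alpha}^{\infty}$ applied to the scaling identity \eqref{eq:nus}, exactly as in the proof of Proposition \ref{prop:nu_h_conv}, deliver the uniform tail estimate $\nu^s(w)\leq C|w|^{-d-\alpha/2}$ for $|w|$ bounded away from $0$ and $s$ large. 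The same Potter argument bounds $\kappa_\Gamma^s(y)=\int_{\Gamma^c}\nu^s(y-z)\,dz$ uniformly in $s$ on compacts of $\Gamma$, with boundary singularity of order $\delta_\Gamma(y)^{-\alpha/2}$ (say) as $y\to\partial\Gamma$.

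The boundary singularity of $\kappa_\Gamma^s(y)$ is the principal obstacle. I would absorb it using the retained factor $\P_y(\tau_\Gamma^s>t)$, whose vanishing as $y\to\partial\Gamma$ can be controlled uniformly in $s$ via Proposition \ref{prop:BHP} and Lemma \ref{lem:upper_harm} — crucially, uniformity holds because {\bf A1} transfers to $\bfX^s$ with the same constants $M,\beta$, so $\cbhi$ is $s$-independent. Assembling these estimates furnishes an integrable $s$-uniform majorant on $\Gamma$, after which dominated convergence concludes the proof. The hard part is precisely this $s$-uniform balance between the blow-up of $\kappa_\Gamma^s(y)$ and the vanishing of the survival probability at $\partial\Gamma$; everything else is the bookkeeping of already established facts.
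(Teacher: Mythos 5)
Your route is dominated convergence with an explicit majorant, whereas the paper proceeds quite differently: it inserts the semigroup identity $p_t^{\Gamma,s}(x,y)=\frac{2}{t}\int_0^{t/2}\int_\Gamma p_{t-u}^{\Gamma,s}(x,v)\,p_u^{\Gamma,s}(v,y)\,dv\,du$, bounds $p_{t-u}^{\Gamma,s}(x,\cdot)\le g$ for $u\in(0,t/2)$ by an $s$-uniform integrable function using Lemma~\ref{lem:hks_est} and \eqref{e:pts}, and then exploits the Ikeda--Watanabe normalization $\int_0^\infty\int_\Gamma\int_{\Gamma^c}p_u^{\Gamma,s}(v,y)\nu^s(y,z)\,dz\,dy\,du=1$ from \eqref{eq:IWs1}: since the majorant $g(v)p_u^{\Gamma,s}(v,y)\nu^s(y,z)$ converges pointwise \emph{and} its integrals converge (trivially, to $\int g$), Vitali's theorem gives uniform integrability, which then transfers to the smaller integrand and closes the argument. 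The point of this maneuver is precisely to avoid any quantitative estimate near $\partial\Gamma$; the boundary blow-up of $\kappa_\Gamma^s$ is never confronted directly.

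Your proposal, by contrast, must confront it, and here there is a genuine gap. You want to dominate $\P_y(\tau_\Gamma^s>t)\,\kappa_\Gamma^s(y)$ near $\partial\Gamma$ uniformly in $s$, and you invoke Proposition~\ref{prop:BHP} and Lemma~\ref{lem:upper_harm} to control the survival probability. But both of those statements concern nonnegative functions that are \emph{regular harmonic} in $D\cap B(x_0,2r)$ and vanish on $D^c\cap B(x_0,2r)$; the survival probability $y\mapsto\P_y(\tau_\Gamma^s>t)$ is not regular harmonic (it solves a parabolic, not an elliptic, boundary problem), so those results do not apply to it. No estimate of the form $\P_y(\tau_\Gamma^s>t)\lesssim\delta_\Gamma(y)^{\rho}$ with a uniform-in-$s$ exponent and constant is established anywhere in the paper, and producing one for a general Lipschitz cone is nontrivial. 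There is also a quantitative slip that matters once you do try to balance exponents: the boundary singularity of $\kappa_\Gamma^s$ is of order $\delta_\Gamma(y)^{-\alpha}$, not $\delta_\Gamma(y)^{-\alpha/2}$ as you wrote — it is comparable to the tail $\int_{|w|>\delta_\Gamma(y)}\nu^s(w)\,dw$ — which makes the required cancellation against the survival probability considerably tighter. So while the intuition that the two effects should cancel is correct, your proof as written rests on tools that do not reach the survival probability, and filling that hole would take substantial additional work that the paper's Vitali/Ikeda--Watanabe argument is specifically designed to bypass.
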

\begin{proof}
	By the semigroup property,
	\begin{eqnarray}
		 P_t^{\Gamma,s}\kappa_{\Gamma}^s(x)&=&
		\int_{\Gamma}\int_{\Gamma^c} p_t^{\Gamma,s}(x,y)\nu^s(y,z)\,dz\,dy\nonumber \\
		&=&
		\frac{2}{t}\int_0^{t/2} \int_{\Gamma} \int_{\Gamma}\int_{\Gamma^c} p_{t-u}^{\Gamma,s}(x,v)p_u^{\Gamma,s}(v,y)\nu^s(y,z)\,dz\,dy\,dv\,du.\label{e.ui2}
	\end{eqnarray}
	We claim that there is an integrable function $g\ge 0$ on $\Rd$ such that
	\begin{equation}\label{e.opgzg}
		p_{r}^{\Gamma,s}(x,v)\le g(v),\quad r\in (t/2,t), v\in \Rd,
	\end{equation}
	if $s$ is large enough. Indeed, by Lemma \ref{lem:hks_est} and \eqref{eq:nu_comp} there is $c=c(d,M,\beta,x)$ such that
	\[
	p_r^{\Gamma,s}(x,v) \leq p_r^{s}(x,v) \leq cp_r^s(v).  
	\]
	Next, again by Lemma \ref{lem:hks_est}, monotonicity of $\psi^{-1,s}$ and \eqref{eq:36},
	\[
	p_r^s(v) \leq c \left(\psi^{-1,s}(1/r)\right)^d \leq c \left(\psi^{-1,s}(2/t)\right)^d \leq 2c\left(\psi^{-1,\alpha}(2/t)\right)^d
	\]
	for $s$ large enough. Thus, with the aid of \eqref{e:pts} we conclude \eqref{e.opgzg}. 
	Note that
	\begin{eqnarray}
		&&\int_{\Gamma} \int_0^{\infty}\int_{\Gamma}\int_{\Gamma^c} g(v)p_u^{\Gamma,s}(v,y)\nu^s(y,z)\,dz\,dy\,du\,dv\label{e.sc}\\
		&&= \int_{\Gamma} g(v)\,dv\to \int_{\Gamma} g(v)\,dv\nonumber\\
		&&=\int_{\Gamma} \int_0^{\infty}\int_{\Gamma}\int_{\Gamma^c} g(v)p_u^{\Gamma,\alpha}(v,y)\nu^\alpha(y,z)\,dz\,dy\,du\,dv \nonumber.
	\end{eqnarray}
	Therefore the integrand in \eqref{e.sc} is uniformly integrable, and so is the integrand in \eqref{e.ui2}. The integrand in \eqref{e.ui2} converges everywhere, and the proof is completed by another application of Vitali's  theorem.
\end{proof}

\subsection{The proof of the main result}\label{sec:Martin}
We define the Martin kernel for isotropic $\alpha$-stable process at $0$ for $\Gamma$   by setting 
\[
M_0^{\alpha}(y) = \lim_{\Gamma \ni x \to 0} \frac{G_{\Gamma}^{\alpha}(x,y)}{G_{\Gamma}^{\alpha}(x,\one)}, \quad y \in \Rd \setminus \{0\}.
\]
A discussion about Martin kernel for stable case may be found in \cite{BoPaWa2018}. In particular, by \cite[eq. (2.18)]{BoPaWa2018} we get that $M_0^{\alpha}$ is homogeneous of degree  $\alpha-d - \tilde{\beta}$, i.e.
\begin{equation}\label{eq:K_hom}
	M_0^{\alpha}(x) = |x|^{\alpha-d-\tilde{\beta}}M_0^{\alpha}(x/|x|),
\end{equation}
where $0<\tilde{\beta}<\alpha$ is the homogeneity degree of the Martin kernel at infinity for $\Gamma$ (see \cite[(2.17)]{BoPaWa2018}). Furthermore, since the Martin kernel at infinity for $\Gamma$ is locally bounded in $\Rd$ (see Ba{\~n}uelos and Bogdan \cite[Theorem 3.2]{BB04}), by \cite[(2.18)]{BoPaWa2018} we get that $M_0^{\alpha}$ is locally bounded on $\Rd \setminus \{0\}$.
\begin{lemma}\label{lem:Mkernel}
	Assume {\bf A}. For every $y \in \Gamma$,
	\[
	\lim_{s \to \infty} \frac{G_{\Gamma}^s \big(x\psi^{-1}(1/s),y\big)}{G_{\Gamma}^s \big(x\psi^{-1}(1/s),\one \big)} = M_0^{\alpha}(y).
	\]
\end{lemma}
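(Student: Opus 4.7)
The plan is to combine the $s$-uniform boundary oscillation-reduction of Theorem \ref{thm:uni_lim} with the pointwise convergence of Green functions from Proposition \ref{prop:green}, exploiting the fact that $w_s := x\psi^{-1}(1/s) \to 0$ as $s \to \infty$ (because $\psi^{-1} \in \calR_{1/\alpha}^0$).

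Fix $y \in \Gamma$ and set $R := |y| \wedge 1 > 0$. Applying \eqref{eq:G_harm} separately to each rescaled process, the functions
$$f^s(w) := G_\Gamma^s(w,y), \qquad g^s(w) := G_\Gamma^s(w,\one)$$
are, for every $s \geq 1$, regular $\bfX^s$-harmonic on $\Gamma \cap B(0, R)$ and vanish on $B(0,R) \setminus \Gamma$. Theorem \ref{thm:uni_lim}, applied at $x_0 = 0 \in \partial\Gamma$ with this value of $R$, then yields, for each $\epsilon > 0$, a radius $r = r(\epsilon) \in (0, R)$ such that for all $s \geq 1$ and all $w_1, w_2 \in \Gamma \cap B(0, r)$,
\begin{equation}\label{eq:plan_ratio}
(1-\epsilon) \frac{f^s(w_2)}{g^s(w_2)} \leq \frac{f^s(w_1)}{g^s(w_1)} \leq (1+\epsilon) \frac{f^s(w_2)}{g^s(w_2)}.
\end{equation}
Shrinking $r$ further if necessary, the very definition of $M_0^{\alpha}$ ensures that $|G_\Gamma^\alpha(w,y)/G_\Gamma^\alpha(w,\one) - M_0^\alpha(y)| < \epsilon$ for all $w \in \Gamma \cap B(0, r)$. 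Pick a reference point $w_0 \in \Gamma \cap B(0, r)$, e.g.\ $w_0 = (r/2)\one$; Proposition \ref{prop:green} gives $f^s(w_0)/g^s(w_0) \to G_\Gamma^\alpha(w_0, y)/G_\Gamma^\alpha(w_0, \one)$ as $s \to \infty$, which lies within $\epsilon$ of $M_0^\alpha(y)$. Since $\Gamma$ is a cone containing $x$ and $\psi^{-1}(1/s) \to 0$, we have $w_s \in \Gamma \cap B(0, r)$ for all sufficiently large $s$. Taking $w_1 = w_s$, $w_2 = w_0$ in \eqref{eq:plan_ratio} and combining the three estimates yields
$$\limsup_{s \to \infty} \left| \frac{G_\Gamma^s(w_s, y)}{G_\Gamma^s(w_s, \one)} - M_0^\alpha(y) \right| = O(\epsilon),$$
and letting $\epsilon \to 0^+$ completes the proof.

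The principal obstacle is the step producing \eqref{eq:plan_ratio}: the uniformity in $s$ of the boundary limit is precisely what permits us to choose $r$ independently of $s$, and hence to shrink the ball around $0$ \emph{before} sending $s \to \infty$. This interchange is unavoidable because the approach $w_s \to 0$ is itself driven by $s$, so a pointwise-in-$s$ version of the boundary oscillation bound (as in \cite{KJ17}) would not be enough to identify the limit with $M_0^{\alpha}(y)$. Once Theorem \ref{thm:uni_lim} is at hand, the remaining ingredients --- convergence of the Green functions at any fixed reference point from Proposition \ref{prop:green}, and the defining boundary limit of $M_0^{\alpha}$ --- combine without further subtlety.
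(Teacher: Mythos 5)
Your proposal is correct and takes essentially the same route as the paper: the paper also combines the $s$-uniform oscillation bound from Theorem~\ref{thm:uni_lim} with the pointwise convergence of Green functions from Proposition~\ref{prop:green}, merely packaging the final interchange of limits via the Moore--Osgood theorem rather than the direct $\epsilon$-chasing you carry out by hand.
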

\begin{proof}
	Fix $y \in \Gamma$. We will prove the lemma by verifying that
	\begin{equation}\label{eq:3}
		\lim_{(\Gamma, \R_+)\ni (x,1/s) \to (0,0)} \frac{G_{\Gamma}^s \big(x,y\big)}{G_{\Gamma}^s \big(x,\one \big)}  = \lim_{\Gamma \ni x \to 0} \lim_{s \to \infty} \frac{G_{\Gamma}^s \big(x,y\big)}{G_{\Gamma}^s \big(x,\one \big)} = \lim_{\Gamma \ni x \to 0} \frac{G_{\Gamma}^{\alpha} \big(x,y\big)}{G_{\Gamma}^{\alpha} \big(x,\one \big)} = M_0^{\alpha}(y).
	\end{equation}
	To this end we will justify the application of the Moore-Osgood theorem. 
First we observe that in view of Proposition \ref{prop:green}, for every $x \in \Gamma$,
	\[
	\lim_{s \to \infty} \frac{G_{\Gamma}^s(x,y)}{G_{\Gamma}^s(x,\one)} = \frac{G_{\Gamma}^{\alpha}(x,y)}{G_{\Gamma}^{\alpha}(x,\one)}.
	\]
	Next, we note that Theorem \ref{thm:uni_lim} yields
	\[
	\lim_{r \to 0^+} \sup_{s\geq 1} \frac{\sup\limits_{x \in \Gamma_r} \dfrac{G_{\Gamma}^s (x,y)}{G_{\Gamma}^s (x,\one)}}{\inf\limits_{x \in \Gamma_r} \dfrac{G_{\Gamma}^s (x,y)}{G_{\Gamma}^s (x,\one)}} =1.
	\]
	That is, for every $\eta>0$ there exists $r=r(\eta)$ such that
	\begin{equation}\label{eq:18}
		\bigg|\sup_{x \in \Gamma_r} \frac{G_{\Gamma}^s(x,y)}{G_{\Gamma}^s(x,\one)} - \inf_{x \in \Gamma_r} \frac{G_{\Gamma}^s(x,y)}{G_{\Gamma}^s(x,\one)} \bigg| \leq \eta \inf_{x \in \Gamma_r} \frac{G_{\Gamma}^s(x,y)}{G_{\Gamma}^s(x,\one)}, \quad r<r(\eta), s\geq 1.
	\end{equation}
	We claim that $G_{\Gamma}^s(x,y)/G_{\Gamma}^s(x,\one)$ converges  as $\Gamma\ni x \to 0$, uniformly in $s \geq s_0$ for some $s_0\geq 1$. Fix $x_0 \in \Gamma_{|y|/2}$. By \eqref{eq:BHP},
	\[
	\sup_{x \in \Gamma_r} \frac{G_{\Gamma}^s(x,y)}{G_{\Gamma}^s(x,\one)} \leq c_1 \frac{G_{\Gamma}^s(x_0,y)}{G_{\Gamma}^s(x_0,\one)},
	\]
	if $r$ is small enough. Since both $y$ and $x_0=x_0(y)$ are fixed, Proposition \ref{prop:green} entails that
	\begin{equation}\label{eq:19}
		\sup_{x \in \Gamma_r} \frac{G_{\Gamma}^s(x,y)}{G_{\Gamma}^s(x,\one)} \leq 2c_1 \frac{G_{\Gamma}^{\alpha}(x_0,y)}{G_{\Gamma}^{\alpha}(x_0,\one)} = c_2,
	\end{equation}
	if $r>0$ is small enough and $s \geq s_0$, for $s_0$ large enough. We note that both $s_0$ and $c_2$ depend only on $y$ and in particular they do not depend on $r$. Now, if we fix $\epsilon>0$ and set $\eta = \epsilon/c_2$ then, using \eqref{eq:18} and \eqref{eq:19} for $|x_1|,|x_2|<r(\eta(\epsilon))=r(\epsilon)$, we obtain for $s\ge s_0$,
	\begin{align*}
		\bigg| \frac{G_{\Gamma}^s(x_1,y)}{G_{\Gamma}^s(x_1,\one)} - \frac{G_{\Gamma}^s(x_2,y)}{G_{\Gamma}^s(x_2,\one)}\bigg| &\leq \bigg|\sup_{x \in \Gamma_{r(\epsilon)}} \frac{G_{\Gamma}^s(x,y)}{G_{\Gamma}^s(x,\one)} - \inf_{x \in \Gamma_{r(\epsilon)}} \frac{G_{\Gamma}^s(x,y)}{G_{\Gamma}^s(x,\one)} \bigg| \\ &\leq \frac{\epsilon}{c_2} \inf_{x \in \Gamma_{r(\epsilon)}} \frac{G_{\Gamma}^s(x,y)}{G_{\Gamma}^s(x,\one)} \\  &\leq \frac{\epsilon}{c_2} \sup_{x \in \Gamma_{r(\epsilon)}} \frac{G_{\Gamma}^s(x,y)}{G_{\Gamma}^s(x,\one)}  \\ &\leq \frac{\epsilon}{c_2} \cdot c_2=\epsilon,
	\end{align*}
	and the claim is proved. Thus, by the Moore-Osgood theorem \cite[Chapter VII]{Graves} we obtain \eqref{eq:3} and the lemma follows immediately.
\end{proof}

\begin{lemma}\label{lem:Gfs_limit}
	Assume {\bf A}. Let $f^s$ be a family of measurable non-negative functions which are uniformly  bounded on $\Gamma_r$ for each $r \geq 1$ and $s \geq s_0$ with some $s_0 \geq 1$. Suppose that $f^s \to f^{\alpha}$ a.e. $\Gamma$ as $s \to \infty$. We also assume that $G^sf^s$ are uniformly bounded on $\Gamma$ and there is $x_0 \in \Gamma$ such that $\lim_{s \to \infty} G_{\Gamma}^sf^s(x_0) = G_{\Gamma}^{\alpha} f^{\alpha}(x_0)$. Then $\int_{\Gamma} M_0^{\alpha}(y) f^{\alpha}(y)\,dy<\infty$ and for every $x\in\Gamma$,
	\begin{equation}\label{eq:Gfs_limit}
		\lim_{s\to \infty} \frac{G^{s}_{\Gamma}f^s(\psi^{-1}(1/s)x)}{G^{s}_{\Gamma}(\psi^{-1}(1/s)x,\one)} = \int_{\Gamma} M_0^{\alpha}(y)f^{\alpha}(y)\:dy.
	\end{equation}
\end{lemma}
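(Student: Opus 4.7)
The plan is to pass $s\to\infty$ inside the integral
$$\frac{G_\Gamma^s f^s(\psi^{-1}(1/s)x)}{G_\Gamma^s(\psi^{-1}(1/s)x,\one)} \;=\; \int_\Gamma \frac{G_\Gamma^s(\psi^{-1}(1/s)x,y)}{G_\Gamma^s(\psi^{-1}(1/s)x,\one)}\, f^s(y)\,dy,$$
exploiting Lemma \ref{lem:Mkernel}, which gives pointwise convergence of the ratio to $M_0^\alpha(y)$. Combined with the hypothesis $f^s\to f^\alpha$ a.e., the integrand converges a.e.\ to $M_0^\alpha(y)f^\alpha(y)$, so Fatou's lemma yields
$$\int_\Gamma M_0^\alpha(y)f^\alpha(y)\,dy \le \liminf_{s\to\infty}\frac{G_\Gamma^s f^s(\psi^{-1}(1/s)x)}{G_\Gamma^s(\psi^{-1}(1/s)x,\one)}.$$
The remaining task is the matching $\limsup$ bound, which will simultaneously force $\int_\Gamma M_0^\alpha f^\alpha$ to be finite.

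To establish the $\limsup$ I split $\Gamma$ into $\Gamma\cap B_\rho$, $\Gamma\cap(B_R\setminus B_\rho)$ and $\Gamma\setminus B_R$ with $0<\rho<1<R$. On the tail $\Gamma\setminus B_R$, fix an auxiliary point $x_1\in \Gamma\cap B_{1/4}$. For $R>1$ large and $s$ large enough that $\psi^{-1}(1/s)x\in\Gamma\cap B_{1/4}$, both $y\in\Gamma\setminus B_R$ and $\one$ lie outside $\Gamma\cap B_{1/2}$, so $G_\Gamma^s(\cdot,y)$ and $G_\Gamma^s(\cdot,\one)$ are regular harmonic on $\Gamma\cap B_{1/2}$, and the $s$-uniform boundary Harnack inequality (Proposition \ref{prop:BHP}, with $s$-independent constant as verified at the start of Section \ref{s.ubhp}) gives
$$\frac{G_\Gamma^s(\psi^{-1}(1/s)x,y)}{G_\Gamma^s(\psi^{-1}(1/s)x,\one)}\leq \cbhi\, \frac{G_\Gamma^s(x_1,y)}{G_\Gamma^s(x_1,\one)}.$$
Proposition \ref{prop:green} bounds $G_\Gamma^s(x_1,\one)$ uniformly below by $\tfrac12 G_\Gamma^\alpha(x_1,\one)>0$, and an iterated Harnack argument along a compact path joining $x_0$ to $x_1$ (which avoids the singularity of $G_\Gamma^s(\cdot,y)$ for $R$ large, with $s$-uniform constants) gives $G_\Gamma^s(x_1,y)\lesssim G_\Gamma^s(x_0,y)$. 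Scheffé's lemma applied to the nonnegative family $G_\Gamma^s(x_0,\cdot)f^s(\cdot)$, which converges a.e.\ to $G_\Gamma^\alpha(x_0,\cdot)f^\alpha(\cdot)$ by Proposition \ref{prop:green} and whose integrals $G_\Gamma^s f^s(x_0)$ converge by hypothesis, then yields $L^1$ convergence, hence uniform integrability, so $\int_{\Gamma\setminus B_R}G_\Gamma^s(x_0,y)f^s(y)\,dy\to 0$ uniformly in $s$ as $R\to\infty$. On the compact annulus $\Gamma\cap(B_R\setminus B_\rho)$ the integrand is uniformly bounded (for large $s$ the point $\psi^{-1}(1/s)x$ is bounded away from the annulus and from $\one$, so no singularity appears) and converges uniformly on compacta of $\Gamma\setminus\{0\}$ (by the proof of Lemma \ref{lem:Mkernel} through Theorem \ref{thm:uni_lim}); bounded convergence yields the limit $\int_{\Gamma\cap(B_R\setminus B_\rho)}M_0^\alpha(y)f^\alpha(y)\,dy$.

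The piece $\Gamma\cap B_\rho$ is the principal obstacle: both $\psi^{-1}(1/s)x$ and the integration variable $y$ approach the cone vertex, so the Green kernel $G_\Gamma^s(\psi^{-1}(1/s)x,\cdot)$ develops a diagonal singularity inside the integration region, which prevents a direct application of BHP to the pair $(G_\Gamma^s(\cdot,y),G_\Gamma^s(\cdot,\one))$ on any single ball near $0$. My plan is a dyadic decomposition $\Gamma\cap B_\rho=\bigcup_{k\geq 0} A_k$ with $A_k=\Gamma\cap(B_{2^{-k}\rho}\setminus B_{2^{-k-1}\rho})$: on each shell, for $s$ large enough relative to $k$ that $\psi^{-1}(1/s)x\in B_{2^{-k-3}\rho}$, the pair $(G_\Gamma^s(\cdot,y),G_\Gamma^s(\cdot,\one))$ is regular harmonic on $\Gamma\cap B_{2^{-k-2}\rho}$ (which contains $\psi^{-1}(1/s)x$ but not $A_k$), BHP applies, and the uniform oscillation bound of Theorem \ref{thm:uni_lim} compares the ratio on $A_k$ with its Martin-kernel limit to give
$$\int_{A_k}\frac{G_\Gamma^s(\psi^{-1}(1/s)x,y)}{G_\Gamma^s(\psi^{-1}(1/s)x,\one)}\,dy\;\lesssim\;\int_{A_k}M_0^\alpha(y)\,dy.$$
Summing over $k$ and invoking the homogeneity $M_0^\alpha(y)=|y|^{\alpha-d-\tilde\beta}M_0^\alpha(y/|y|)$ with $\tilde\beta<\alpha$, so that $\int_{\Gamma\cap B_\rho}M_0^\alpha(y)\,dy\lesssim \rho^{\alpha-\tilde\beta}\to 0$, together with the uniform boundedness of $f^s$ on $\Gamma_1$, bounds the contribution from $\Gamma\cap B_\rho$ by $O(\rho^{\alpha-\tilde\beta})$ uniformly in $s$. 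Combining the three pieces and sending $s\to\infty$, then $R\to\infty$ and $\rho\to 0$, produces the matching $\limsup$ bound, finiteness of $\int_\Gamma M_0^\alpha f^\alpha$, and the desired equality.
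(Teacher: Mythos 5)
The plan follows the same global strategy as the paper---Fatou for the lower bound plus a three-piece split into an inner ball $\Gamma\cap B_\rho$, an annulus, and a far tail, with BHP, Proposition~\ref{prop:green} and Lemma~\ref{lem:Mkernel} doing the work on the middle and outer pieces. The outer piece in your write-up is in fact slightly slicker than the paper's: Scheff\'e's lemma applied to $G_\Gamma^s(x_0,\cdot)f^s(\cdot)$ (using the hypothesis that $G_\Gamma^sf^s(x_0)\to G_\Gamma^\alpha f^\alpha(x_0)$) gives $L^1$ convergence and hence uniform integrability, so the tail is controlled uniformly in $s$; the paper instead keeps an explicit $\int_{\Gamma\setminus\Gamma_R}G_\Gamma^\alpha(x_0,y)f^\alpha(y)\,dy$ remainder via Fatou. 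Both routes work. (The claim of \emph{uniform} convergence of the Green-function ratio on compacta is more than Lemma~\ref{lem:Mkernel} delivers; it gives only pointwise convergence, but dominated convergence on the annulus---which you could use instead---is all that is needed.)

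The genuine gap is in $I_1$, the inner piece. Your dyadic argument applies BHP on each shell $A_k=\Gamma\cap(B_{2^{-k}\rho}\setminus B_{2^{-k-1}\rho})$ only \emph{for $s$ large enough relative to $k$}, namely when $\psi^{-1}(1/s)x\in B_{2^{-k-3}\rho}$. For a fixed $s$ this works only for $k\le K(s)\approx\log_2\bigl(\rho/(8|x_s|)\bigr)$ with $x_s=\psi^{-1}(1/s)x$; the shells with $k>K(s)$, including whichever shell contains $x_s$ itself, are not covered. That is exactly the region where $y\mapsto G_\Gamma^s(x_s,y)$ has its diagonal singularity, so it cannot simply be absorbed into a sum $\sum_k\int_{A_k}M_0^\alpha$. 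In particular the claimed conclusion that $I_1(s)=O(\rho^{\alpha-\tilde\beta})$ \emph{uniformly in large $s$} is not established, and the paper's own estimate indicates it should not be expected: the paper bounds the diagonal contribution not by a power of $\delta$ but by $1/\ln\bigl(h^s(3\delta)/h^s(1/2)\bigr)$, which after $s\to\infty$ leaves only the much weaker $1/(-\ln\delta)$. What makes the paper's $I_1$ estimate go through is a cancellation your scheme misses: it splits $G_\Gamma^s(x_s,y)=G_{\Gamma_{2\delta}}^s(x_s,y)+\E_{x_s}G_\Gamma^s(X_{\tau^s_{\Gamma_{2\delta}}},y)$ (equation~\eqref{eq:G_harm}), integrates the first term to get $\E_{x_s}\tau^s_{\Gamma_{2\delta}}$, and divides by a \emph{lower bound} for $G_\Gamma^s(x_s,\one)$ that itself contains the same factor $\E_{x_s}\tau^s_{\Gamma_{2\delta}}$ (cf.\ \eqref{eq:13}--\eqref{eq:tg17}), so the potentially large diagonal quantity cancels at the cost of the logarithmic factor. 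Without some version of this cancellation (or another mechanism to control the singular shell), the dyadic decomposition alone does not close the argument.
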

\begin{proof}
	We closely follow the proof of \cite[Lemma 3.5]{BoPaWa2018} and adapt it to our setting. Fix $x \in \Gamma$, let $0<\delta<R$ and denote $x_s = x\psi^{-1}(1/s)$. By \eqref{eq:BHP} we have, for large enough $s>s_0=s_0(\delta)$ and some $x_1 \in \Gamma_{\delta/2}$,
	\begin{equation}\label{eq:5}
		\frac{G^s (x_s,y)}{G^s (x_s,\one)} \leq \cbhi \frac{G^s(x_1,y)}{G^s(x_1,\one)}, \quad |y|>\delta.
	\end{equation}
	Next, by Proposition \ref{prop:green} we have $G_{\Gamma}^s(x_1,\one) \geq \tfrac12 G_{\Gamma}^{\alpha}(x_1,\one)$ for sufficiently large $s \geq s_0$. Furthermore, by \eqref{eq:K_hom} and local boundedness on $\Rd \setminus \{0\}$ we get that $M_0^{\alpha}$ is integrable at the origin; thus, by \eqref{eq:5} with $\delta=1$, the assumption of uniform boundedness of $f^s$ on $\Gamma_1$ and Fatou's lemma,
	\[
	\int_{\Gamma} M_0^{\alpha}(y) f^{\alpha}(y)\,dy \leq c \bigg(\int_{\Gamma_1}M_0^{\alpha}(y)\,dy + \frac{1}{G_{\Gamma}^{\alpha}(x_1,y)} \liminf_{s \to \infty} \int_{\Gamma} G_{\Gamma}^s (x_1,y) f^s(y)\,dy  \bigg) < \infty,
	\] 
	which proves that the right-hand side of \eqref{eq:Gfs_limit} is finite.
	
	Now, we split the integral as follows:
	\begin{align*}
		\frac{G_{\Gamma}^s f^s(x_s)}{G_{\Gamma}^s(x_s,\one)} &= \int_{\Gamma} \frac{G_{\Gamma}^s(x_s,y)}{G_{\Gamma}^s(x_s,\one)} f^s(y)\,dy = \bigg(\int_{\Gamma_{\delta}} + \int_{\Gamma_R \setminus \Gamma_{\delta}} + \int_{\Gamma \setminus \Gamma_R}\bigg) \frac{G_{\Gamma}^s(x_s,y)}{G_{\Gamma}^s(x_s,\one)} f^s(y)\,dy \\ &=:I_1(s)+I_2(s)+I_3(s).
	\end{align*}
	For $d\geq 2$, by \eqref{eq:5} and Proposition \ref{prop:green} we have, for $s$ large enough and $|y|> \delta$,
	\[
	\frac{G_{\Gamma}^s(x_s,y)}{G_{\Gamma}^s(x_s,\one)} 
	\leq c \frac{U^s(x_1-y)}{G_{\Gamma}^{\alpha}(x_1,\one)} \leq c \frac{U^s(\delta/2)}{G_{\Gamma}^{\alpha}(x_1,\one)} \leq c \frac{U^{\alpha}(\delta/2)}{G_{\Gamma}^{\alpha}(x_1,\one)}
	\] 
	for some $c = c(\delta)$. A similar bound, for $d=1$, is a consequence of Lemma \ref{lem:Ub}, Proposition \ref{prop:nu_h_conv} and the fact that $u \mapsto u^2h(u)$ is non-decreasing.   Therefore, since $f_s$ are uniformly bounded on $\Gamma_R$, by the dominated convergence theorem and Lemma \ref{lem:Mkernel},
	\begin{equation}\label{eq:16}
		\lim_{s \to \infty} I_2(s) = \lim_{s \to \infty} \int_{\Gamma_R \setminus \Gamma_{\delta}} \frac{G_{\Gamma}^s(x_s,y)}{G_{\Gamma}^s(x_s,\one)} f^s(y)\,dy = \int_{\Gamma_R \setminus \Gamma_{\delta}} M_0^\alpha(y)
		f^{\alpha}(y)\,dy.
	\end{equation}
	Next, let $x_0$ be such that
	\[
	\lim_{s \to \infty} G_{\Gamma}^s f^s(x_0) = G_{\Gamma}^{\alpha} f^{\alpha}(x_0).
	\]
	We may and do assume that $R>2|x_0|$. Again by \eqref{eq:BHP}, for $s$ large enough,
	\[
	\frac{G_{\Gamma}^s(x_s,y)}{G_{\Gamma}^s(x_s,\one)} \leq \cbhi \frac{G_{\Gamma}^s(x_0,y)}{G_{\Gamma}^s(x_0,\one)}, \quad |y| \geq R.
	\]
	Therefore, by Proposition \ref{prop:green},
	\begin{align*}
		I_3(s) &=\int_{\Gamma \setminus \Gamma_R} \frac{G_{\Gamma}^s(x_s,y)}{G_{\Gamma}^s(x_s,\one)} f^s(y)\,dy \leq \cbhi \int_{\Gamma \setminus \Gamma_R} \frac{G_{\Gamma}^s(x_0,y)}{G_{\Gamma}^s(x_0,\one)} f^s(y)\,dy \\ &\leq \frac{c}{G_{\Gamma}^{\alpha}(x_0,\one)} \int_{\Gamma \setminus \Gamma_R} G_{\Gamma}^s(x_0,y)f^s(y)\,dy,
	\end{align*}
	for large enough $s\geq s_0$. Denote $c_1=c/G^{\alpha}(x_0,\one)$. By the Fatou lemma,
	\begin{align}
		\limsup_{s \to \infty} I_3(s) &\leq c_1 \bigg( \limsup_{s \to \infty} G_{\Gamma}^s f^s(x_0) - \liminf_{s \to \infty} \int_{\Gamma_R} G_{\Gamma}^s(x_0,y)f^s(y)\,dy \bigg) \nonumber \\ &\leq c_1 \bigg( G_{\Gamma}^{\alpha} f^{\alpha}(x_0) - \int_{\Gamma_R} G_{\Gamma}^{\alpha}(x_0,y)f^{\alpha}(y)\,dy \bigg) \label{eq:6} \\ &= c_1 \int_{\Gamma \setminus \Gamma_R} G_{\Gamma}^{\alpha}(x_0,y)f^{\alpha}(y)\,dy. \nonumber
	\end{align}
	
	It remains to estimate $I_1(s)$. Note that, by symmetry, $y \mapsto G_{\Gamma}^s(v,y)$  is regular harmonic on $\Gamma_{\delta}$ when $v \in \Gamma \setminus \Gamma_{2\delta}$.  Using \eqref{eq:BHP} we obtain, for $y \in \Gamma_{\delta}$ and $v \in \Gamma \setminus \Gamma_{2\delta}$,
	\begin{equation}\label{eq:7}
		\frac{G_{\Gamma}^s(v,y)}{G_{\Gamma}^s(y,2\delta \cdot \one)} \leq \cbhi \frac{G_{\Gamma}^s(v,\delta/2 \cdot \one)}{G_{\Gamma}^s(\delta/2 \cdot \one,2\delta \cdot \one)}.
	\end{equation}
	Therefore, for $s$ large enough and $y \in \Gamma_{\delta}$, by \eqref{eq:7} and \eqref{eq:G_harm},
	\begin{align}\label{eq:11}\begin{aligned}
			G_{\Gamma}^s(x_s,y) &= G_{\Gamma_{2\delta}}^s(x_s,y) + \E_{x_s} G_{\Gamma}^s\big( X_{\tau^s_{\Gamma_{2\delta}}}^s,y \big) \\ &\leq G_{\Gamma_{2\delta}}^s(x_s,y) + c \E_{x_s} \Big[ G_{\Gamma}^s \big( X^s_{\tau^s_{\Gamma_{2\delta}}},\delta/2 \cdot \one \big) \Big] \cdot \frac{G_{\Gamma}^s(y,2\delta \cdot \one)}{G_{\Gamma}^s (\delta/2 \cdot \one,2\delta \cdot \one)} \\ &\leq G_{\Gamma_{2\delta}}^s(x_s,y) + c G_{\Gamma}^s(x_s,\delta/2 \cdot \one) \cdot \frac{G_{\Gamma}^s(y,2\delta \cdot \one)}{G_{\Gamma}^s (\delta/2 \cdot \one,2\delta \cdot \one)}.
	\end{aligned}\end{align}
	Thus, by uniform boundedness of $f_s$ on $\Gamma_{\delta}$ and \eqref{eq:11},
	\begin{equation}\label{eq:12}
		\int_{\Gamma_{\delta}} G_{\Gamma}^s(x_s,y)f^s(y)\,dy \leq c \bigg( \int_{\Gamma_{\delta}} G_{\Gamma_{2\delta}}^s(x_s,y)\,dy + \frac{G_{\Gamma}^s(x_s,\delta/2 \cdot \one)}{G_{\Gamma}^s (\delta/2 \cdot \one,2\delta \cdot \one)} \int_{\Gamma_{\delta}} G_{\Gamma}^s(y,2\delta \cdot \one)\,dy \bigg).
	\end{equation}
	Lemma \ref{lem:upper_harm} implies
	\begin{equation}\label{eq:15}
		\int_{\Gamma_{\delta}} G_{\Gamma}^s (y,2\delta \cdot \one) \,dy \leq c_5\delta^d G_{\Gamma}^s(\delta/2 \cdot \one,2\delta \cdot \one).
	\end{equation}
	Indeed, observe that there is $\tilde{\kappa} \leq \kappa$ such that $B(1/2 \cdot \one,\tilde{\kappa}) \subset \Gamma_1$, thus after a possible change of $\kappa$ we may and do assume that $A_1(0)=1/2 \cdot \one$. Since $\Gamma$ is a cone, it follows immediately that $A_\delta(0) = \delta/2 \cdot \one$.  Moreover, observe that $\Gamma_{\delta}$ is created from $\Gamma_1$ by scaling and the Lipschitz constant as well as $\kappa$ are not affected by the operation; therefore, $c_5$ is independent of $\delta$.
	Furthermore, we have by Proposition \ref{prop:BHP},
	\begin{align}\label{eq:13}
		\frac{\int_{\Gamma_{\delta}} G_{\Gamma_{2\delta}}^s(x_s,y)\,dy}{ G_{\Gamma}^s(x_s,\one)} &\leq \cbhit\frac{\E_{x_s}\tau^s_{\Gamma_{2\delta}}}{\E_{x_s}\tau^s_{\Gamma_{2\delta}}\int_{\Gamma^c_{5\delta/2}}G_{\Gamma}^s(y,\one)\nu^s(y)dy}
	\end{align}
	Since, by Propositions \ref{prop:BHP}, \ref{prop:nu_h_conv}  and \ref{prop:green}, for $s$ large and $y\in \Gamma_{1/2}$ (see \eqref{eq:BHP}),
		\[G_{\Gamma}^s(y,\one) \approx \E_y\tau^s_{\Gamma_{2/3}} \int_{\Gamma^c_{5/6}}G^s_{\Gamma}(w,\one)\nu^s(w)\,dw\approx  \E_y\tau^s_{\Gamma_{2/3}},  \]
		we obtain
		\begin{equation}\label{eq:tg17}\int_{\Gamma^c_{5\delta/2}}G_{\Gamma}^s(y,\one)\nu^s(y)dy\geq c \int_{\Gamma_{1/2}\setminus \Gamma_{3\delta}}\E_y\tau^s_{\Gamma_{2/3}}\nu^s(y)dy\geq c \ln \frac{h^s(3\delta)}{h^s (1/2)} ,
		\end{equation}
		where the last inequality is a consequence of \eqref{eq:tg7}.
	Thus, using  \eqref{eq:12}, \eqref{eq:15} and \eqref{eq:13} together with \eqref{eq:tg17} we infer that
	\[
	I_1(s) \leq c_6 \bigg( 
	\frac{1}{ \ln \frac{h^s(3\delta)}{h^s (1/2)}}+ \delta^d \frac{G_{\Gamma}^s(x_s,\delta/2 \cdot \one)}{G_{\Gamma}^s(x_s,\one)} \bigg).
	\]
	Lemma \ref{lem:Mkernel} and the fact that $h^{\alpha}(\rho)=c\rho^{-\alpha}$ 
	now yield
	\[
	\limsup_{s \to \infty} I_1(s) \leq c_7 \bigg(\frac{1}{-\ln\delta}+ 
	\delta^d M_0^{\alpha}(\delta/2 \cdot \one) \bigg).
	\]
	Finally, using the homogeneity of $M_0^{\alpha}$ \eqref{eq:K_hom} 
	we get
	\begin{align}\label{eq:17}\begin{aligned}
			\limsup_{s \to \infty} I_1(s) &\leq c_7 \bigg( \frac{1}{-\ln\delta}+ 
			\delta^{\alpha-\tilde{\beta}} M_0^{\alpha}(\one)\bigg). 
	\end{aligned}\end{align}
	Now, by \eqref{eq:16}, \eqref{eq:6}, \eqref{eq:17} and the Fatou lemma,
	\begin{align*}
		&\int_{\Gamma_R \setminus \Gamma_{\delta}} M_0^{\alpha}(y)f^{\alpha}(y)\,dy \leq \liminf_{s \to \infty} \frac{G_{\Gamma}^s f^s(x_s)}{G_{\Gamma}^s(x_s,\one)} \leq \limsup_{s \to \infty} \frac{G_{\Gamma}^s f^s(x_s)}{G_{\Gamma}^s(x_s,\one)} \\ &\leq c_8 \bigg(\frac{1}{-\ln\delta}+ \delta^{\alpha-\tilde{\beta}}\bigg) + \int_{\Gamma_R \setminus \Gamma_{\delta}} M_0^{\alpha}(y)f^{\alpha}(y)\,dy + c_1 \int_{\Gamma \setminus \Gamma_R} G_{\Gamma}^{\alpha}(x_0,y)f^{\alpha}(y)\,dy.
	\end{align*}
	Recalling that $\alpha > \tilde{\beta}$ and letting $\delta\to 0$ and $R\to \infty$ we end the proof.
\end{proof}
\begin{theorem}\label{thm:1}
	Assume {\bf A}. Let $x_s = x\psi^{-1}(1/s)$. For every $t>0$ we have
	\[
	\lim_{s \to \infty} \frac{\P_{x_s}(\tau_{\Gamma}^s>t)}{G_{\Gamma}^s(x_s,\one)} = C_t,
	\]
	where $C_t\in (0,\infty)$ is given by
	\[
	C_t = \int_{\Gamma} \int_{\Gamma} M_0^{\alpha}(y)p_t^{\Gamma,\alpha}(y,z)\kappa_{\Gamma}^{\alpha}(z)\,dz\,dy = \int_{\Gamma} M_0^{\alpha}(y)P_t^{\Gamma,\alpha} \kappa_{\Gamma}^{\alpha}(y)\,dy.
	\]
\end{theorem}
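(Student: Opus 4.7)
The starting point is the potential-theoretic identity \eqref{eq:surv_id} applied to the rescaled process $\bfX^s$,
\[
\P_{x_s}(\tau_\Gamma^s > t) \;=\; G_\Gamma^s \bigl(P_t^{\Gamma,s}\kappa_\Gamma^s\bigr)(x_s),
\]
so that the ratio we study is of the form $G_\Gamma^s f^s(x_s)/G_\Gamma^s(x_s,\one)$ with $f^s := P_t^{\Gamma,s}\kappa_\Gamma^s$. The plan is to invoke Lemma~\ref{lem:Gfs_limit} with this $f^s$ and limit function $f^\alpha := P_t^{\Gamma,\alpha}\kappa_\Gamma^\alpha$. If the four hypotheses of that lemma can be verified, then its conclusion reads
\[
\lim_{s\to\infty}\frac{\P_{x_s}(\tau_\Gamma^s>t)}{G_\Gamma^s(x_s,\one)} \;=\; \int_{\Gamma} M_0^{\alpha}(y)\,P_t^{\Gamma,\alpha}\kappa_\Gamma^\alpha(y)\,dy,
\]
which is exactly $C_t$; the double-integral form follows by Fubini, and finiteness of $C_t$ is part of the lemma's conclusion. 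Positivity $C_t\in(0,\infty)$ follows from strict positivity of $M_0^{\alpha}$ and of $P_t^{\Gamma,\alpha}\kappa_\Gamma^\alpha$ on $\Gamma$.

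Three of the four hypotheses are essentially free. Pointwise convergence $f^s(y)\to f^\alpha(y)$ for $y\in\Gamma$ is precisely Lemma~\ref{l.zpk}. Uniform boundedness of $G_\Gamma^s f^s$ on $\Gamma$ is immediate: by \eqref{eq:surv_id} again, $G_\Gamma^s f^s(x)=\P_x(\tau_\Gamma^s>t)\leq 1$ for every $x\in\Gamma$ and $s\geq 1$. The required convergence $G_\Gamma^s f^s(x_0)\to G_\Gamma^\alpha f^\alpha(x_0)$ for some (in fact, every) $x_0\in\Gamma$ is nothing but \eqref{e.sp}, combined with the identity $G_\Gamma^\alpha f^\alpha=\P_\cdot(\tau_\Gamma^\alpha>t)$.

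The main obstacle is the remaining hypothesis: uniform boundedness of $f^s = P_t^{\Gamma,s}\kappa_\Gamma^s$ on $\Gamma_r$ for each $r\geq 1$ and $s\geq s_0$. For fixed $t>0$ and $s$ large enough that $T_2/s<t$, I would exploit the sharp heat kernel bound
\[
p_t^{\Gamma,s}(y,w)\;\leq\; C\,\P_y(\tau_\Gamma^s>t)\,p_t^s(y-w)
\]
from Lemma~\ref{lem:dhks_est}, in conjunction with the uniform $L^\infty$ bound $\|p_t^s\|_\infty\lesssim(\psi^{-1,s}(1/t))^d$ from Lemma~\ref{lem:hks_est}, which stays bounded in $s$ by \eqref{eq:36}. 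Splitting $\int_\Gamma p_t^{\Gamma,s}(y,w)\kappa_\Gamma^s(w)\,dw$ into the near-diagonal region $\{|w-y|\leq 1\}$ and its complement, using $p_t^s(y-w)\leq Ct\,\nu^s(y-w)$ far from the diagonal together with the convergence $\nu^s\to\nu^\alpha$ from \eqref{eq:nu_conv} and Proposition~\ref{prop:nu_h_conv}, and absorbing the local blow-up of $\kappa_\Gamma^s$ near $\partial\Gamma$ into the factor $\P_y(\tau_\Gamma^s>t)$ (which vanishes as $y\to\partial\Gamma$), one obtains a bound depending only on $t$, $r$ and the cone. The delicate part is keeping all constants uniform in $s$; here the invariance of the scaling parameter $M$ in \textbf{A1} for the family $\{\bfX^s\}$ and the uniform convergence of the auxiliary quantities $\nu^s$, $h^s$, $\psi^{-1,s}$ are essential. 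Once this bound is established, Lemma~\ref{lem:Gfs_limit} applies and the theorem follows.
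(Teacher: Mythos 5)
Your overall strategy is exactly the paper's: express the ratio as $G_\Gamma^s f^s(x_s)/G_\Gamma^s(x_s,\one)$ with $f^s = P_t^{\Gamma,s}\kappa_\Gamma^s$ via \eqref{eq:surv_id} and feed it to Lemma~\ref{lem:Gfs_limit}. Your checks of three of the four hypotheses — pointwise convergence via Lemma~\ref{l.zpk}, the bound $G_\Gamma^s f^s \le 1$ from \eqref{eq:surv_id}, and the convergence $G_\Gamma^s f^s(x_0)\to G_\Gamma^\alpha f^\alpha(x_0)$ from \eqref{e.sp} (the paper cites Corollary~\ref{cor:surv_lim}, which yields the same fact) — are correct and coincide with the paper.

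Where you go astray is the uniform bound on $f^s = P_t^{\Gamma,s}\kappa_\Gamma^s$ on $\Gamma_r$, which is the crux. You quote the upper bound from Lemma~\ref{lem:dhks_est} as $p_t^{\Gamma,s}(y,w)\le C\,\P_y(\tau_\Gamma^s>t)\,p_t^s(y-w)$ and then propose to ``absorb the local blow-up of $\kappa_\Gamma^s$ near $\partial\Gamma$ into the factor $\P_y(\tau_\Gamma^s>t)$.'' That cannot work: $y$ is your fixed evaluation point in $\Gamma_r$, so $\P_y(\tau_\Gamma^s>t)$ is a constant that simply comes out of the $w$-integral and controls nothing. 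The factor that actually vanishes as the integration variable approaches $\partial\Gamma$ and can cancel the blow-up of $\kappa_\Gamma^s(w)$ is $\P_w(\tau_\Gamma^s>t)$, which you dropped when you truncated the two-sided estimate $p_t^{\Gamma,s}(y,w)\approx\P_y(\tau_\Gamma^s>t)\P_w(\tau_\Gamma^s>t)\,p_t^s(y-w)$ to a one-sided one. Even after restoring it, a direct $s$-uniform estimate of $\int_\Gamma \P_w(\tau_\Gamma^s>t)\,p_t^s(y-w)\,\kappa_\Gamma^s(w)\,dw$ would still need an argument. The paper sidesteps this entirely: using the full two-sided estimate plus \textbf{A1} to replace $p_t^s(w-x)$ by $p_t^s(w)$ for $x\in\Gamma_r$, it shows $f^s(x)\approx\P_x(\tau_\Gamma^s>t)\cdot J^s$ with $J^s:=\int_\Gamma\P_w(\tau_\Gamma^s>t)\,p_t^s(w)\,\kappa_\Gamma^s(w)\,dw$ independent of $x$; then, since $1\ge G_\Gamma^s P_t^{\Gamma,s}\kappa_\Gamma^s(\one)$, it lower-bounds $G_\Gamma^s(\one,\,\cdot\,)$ near $\one$ by $\E_0\tau^s_{B_\delta}\approx 1/h^s(\delta)\gtrsim 1/h^\alpha(\delta)$ (Proposition~\ref{prop:nu_h_conv}), forcing $J^s\lesssim h^\alpha(\delta)$ uniformly in $s$. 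That identity-driven trick, not a direct integral estimate, is the missing idea.
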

\begin{proof}
	Fix $t>0$. We verify that $f^s(x) = P_t^{\Gamma,s}\kappa_{\Gamma}^s(x)$ satisfies the assumptions of Lemma \ref{lem:Gfs_limit}; then the theorem will follow immediately. Indeed, first observe that pointwise convergence is a claim of Lemma \ref{l.zpk}. Fix $r>0$ and let us verify the uniform boundedness of $f^s$ on $\Gamma_r$. By Lemma \ref{lem:dhks_est} and {\bf A1} we have
	\begin{align*}
		P^{\Gamma,s}_1\kappa^{s}(x)&\approx \P_x(\tau^s_{\Gamma}>t)\int_{\Gamma}\P_y(\tau^s_{\Gamma}>t)p^s_t(y)\kappa^{s}(y)dy
	\end{align*}
	for $s$ large enough, with the comparability  constant dependent only on $d,M,\beta$ and $r$. By \eqref{eq:surv_id}, we have $G^{s}_\Gamma P^{\Gamma,s}_t\kappa^{s}(x)\leq 1$ for all $x \in \Rd$. Since the survival probability is bounded from above by $1$, it remains to find the upper bound for the integral.  If $\delta = \delta_{\Gamma}(\one)$, then by Pruitt's estimates, Proposition \ref{prop:nu_h_conv} and the argument from \eqref{eq:t1},
	\begin{align*}
		1&\geq c\int_{B(\textbf{1},r)}G^{s}_\Gamma(\one,z)\int_{\Gamma}\P_y(\tau^s_{\Gamma}>t)p^s_t(y)\kappa^{s}(y)\,dy\,dz\\&\geq c\E \tau_{B_{\delta}} \int_{\Gamma}\P_y(\tau^s_{\Gamma}>t)p^s_t(y)\kappa^{s}(y)dy  \\ & \geq \frac{c}{h^s(\delta)} \int_{\Gamma}\P_y(\tau^s_{\Gamma}>t)p_t^s(y)\kappa^{s}(y)dy \\ &\geq \frac{c}{h^{\alpha}(\delta)} \int_{\Gamma}\P_y(\tau^s_{\Gamma}>t)p_t^s(y)\kappa^{s}(y)dy.
	\end{align*}
Therefore the functions $P^{\Gamma,s}_t\kappa^{s}$ are uniformly bounded on $\Gamma_r$ for every $r>0$.
	
	Finally, by Corollary \ref{cor:surv_lim} we have $\lim_{s \to \infty} G_{\Gamma}^s f^s(x) = G_{\Gamma}^{\alpha} f^{\alpha}(x)$ for every $x \in \Gamma$, hence an application of Lemma \ref{lem:Gfs_limit} ends the proof.
\end{proof}
The following theorem refines \cite[Theorem 3.3]{BoPaWa2018}, which is a special case for $\bfX$ being the isotropic $\alpha$-stable L\'{e}vy process in $\Rd$, but we note that part of the proof relies on the consequences of \cite[Theorem 1.1]{BoPaWa2018}.

	\begin{theorem}\label{thm:Yaglom_density}Let $X$ be a pure-jump isotropic unimodal L\'{e}vy process. Assume {\bf A}. Let $\Gamma$ be a Lipschitz cone. Then  the following limit exists
		\[
		\lim_{s \to \infty} \frac{p_1^{\Gamma,s}(x_s,y)}{\P_{x_s}(\tau_{\Gamma}^s>1)} = n^{\alpha}(y),
		\]
		where $n^{\alpha}(y)$ is the function from \eqref{eq:limit_n}.
	\end{theorem}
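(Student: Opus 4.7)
The plan is to adapt the proof strategy of Theorem \ref{thm:1}: combine Theorem \ref{thm:1}, which handles the denominator via $\P_{x_s}(\tau_\Gamma^s>1)/G_\Gamma^s(x_s,\one)\to C_1$, with an analogous Green-function limit for the numerator, and then identify the resulting constant by specializing to the $\alpha$-stable case and invoking \cite[Theorem 3.3]{BoPaWa2018}. Concretely, it suffices to prove
\[
\lim_{s\to\infty} \frac{p_1^{\Gamma,s}(x_s,y)}{G_\Gamma^s(x_s,\one)} = C_1\,n^\alpha(y),
\]
since dividing this by the conclusion of Theorem \ref{thm:1} yields the claim.

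I would first apply Lemma \ref{lem:Gfs_limit} with the family $f^s(z) := p_t^{\Gamma,s}(z,y)$, indexed by an auxiliary parameter $t>0$. Its hypotheses can be verified via Lemma \ref{prop:dhk} (pointwise convergence $f^s\to f^\alpha$), Lemmas \ref{lem:dhks_est} and \ref{lem:hks_est} (uniform boundedness of $f^s$ on $\Gamma_r$ and of $G_\Gamma^s f^s$ on $\Gamma$), and dominated convergence for the anchor value $G_\Gamma^s f^s(x_0)\to G_\Gamma^\alpha f^\alpha(x_0)$. Using
\[
G_\Gamma^s[p_t^{\Gamma,s}(\cdot,y)](x) = \int_0^\infty \int_\Gamma p_u^{\Gamma,s}(x,z)\,p_t^{\Gamma,s}(z,y)\,dz\,du = \int_t^\infty p_u^{\Gamma,s}(x,y)\,du,
\]
the lemma yields, for every $t>0$,
\[
H^s(t) := \frac{1}{G_\Gamma^s(x_s,\one)}\int_t^\infty p_u^{\Gamma,s}(x_s,y)\,du \;\xrightarrow[s\to\infty]{}\; H(t) := \int_\Gamma M_0^\alpha(z)\,p_t^{\Gamma,\alpha}(z,y)\,dz.
\]

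The central step, and main obstacle, is to recover $p_1^{\Gamma,s}(x_s,y)/G_\Gamma^s(x_s,\one) = -(H^s)'(1)$ by interchanging the limit in $s$ with the derivative in $t$. I would treat this via Arzel\`a--Ascoli: combining Lemma \ref{lem:dhks_est} with Theorem \ref{thm:1} gives, for $u$ in any compact interval $[a,b]\subset(0,\infty)$ and $s$ sufficiently large, the uniform two-sided bound
\[
\frac{p_u^{\Gamma,s}(x_s,y)}{G_\Gamma^s(x_s,\one)} \asymp \P_y(\tau_\Gamma^s>u)\,p_u^s(y-x_s),
\]
whose right-hand side is uniformly bounded and, via standard $u$-regularity of $p_u^s$ together with Hunt's formula \eqref{eq:Hunt}, equicontinuous in $u$ uniformly in $s$. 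Arzel\`a--Ascoli then extracts subsequences along which $u\mapsto p_u^{\Gamma,s}(x_s,y)/G_\Gamma^s(x_s,\one)$ converges uniformly on $[a,b]$ to some $B$; the pointwise convergence $H^s\to H$ forces $\int_t^{t'}B(u)\,du = H(t)-H(t')$ and hence $B=-H'$. As every subsequential limit coincides, the full family converges, giving
\[
\lim_{s\to\infty}\frac{p_1^{\Gamma,s}(x_s,y)}{G_\Gamma^s(x_s,\one)} = -H'(1).
\]

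It remains to identify $-H'(1) = C_1\,n^\alpha(y)$. Repeating the above argument in the purely $\alpha$-stable setting (now letting $\Gamma\ni x\to 0$ in place of $s\to\infty$, and invoking Martin boundary theory for $\bfX^\alpha$ in place of Lemma \ref{lem:Gfs_limit}) gives $\lim_{\Gamma\ni x\to 0} p_1^{\Gamma,\alpha}(x,y)/G_\Gamma^\alpha(x,\one) = -H'(1)$. On the other hand, \cite[Theorem 3.3]{BoPaWa2018} yields $\lim_{\Gamma\ni x\to 0} p_1^{\Gamma,\alpha}(x,y)/\P_x(\tau_\Gamma^\alpha>1) = n^\alpha(y)$, and Theorem \ref{thm:1} applied to $\bfX^\alpha$ (where the rescaled process coincides with $\bfX^\alpha$ by self-similarity) gives $\lim_{\Gamma\ni x\to 0}\P_x(\tau_\Gamma^\alpha>1)/G_\Gamma^\alpha(x,\one) = C_1$. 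Multiplying these yields $-H'(1) = C_1\,n^\alpha(y)$, which, combined with Theorem \ref{thm:1}, completes the proof.
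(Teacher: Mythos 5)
Your route differs genuinely from the paper's, but it has a gap at the Arzel\`a--Ascoli step. Before getting there, a brief comparison: the paper (a) establishes tightness of the rescaled conditioned laws $\mu_t^s$ via an integrable envelope, (b) identifies the weak limit by testing against $\phi\in C_c^\infty(\Gamma)$ and feeding $f^s = P_t^{\Gamma,s}(-\calL^s\phi)$ into Lemma~\ref{lem:Gfs_limit}, and (c) bootstraps weak convergence to pointwise density convergence by Chapman--Kolmogorov with the test function $\phi_y^s=p_{1/2}^{\Gamma,s}(\cdot,y)$: one writes $p_1^{\Gamma,s}(x_s,y)/\P_{x_s}(\tau_\Gamma^s>1)=\mu_{1/2}^s(\phi_y^s)$ and exploits the weak convergence plus uniform density bounds. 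You instead feed $f^s = p_t^{\Gamma,s}(\cdot,y)$ into Lemma~\ref{lem:Gfs_limit} to get convergence of the tail integrals $H^s(t)$, and then try to recover the density at $t=1$ by differentiating.

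The gap is in the claimed equicontinuity of $u\mapsto p_u^{\Gamma,s}(x_s,y)/G_\Gamma^s(x_s,\one)$. You argue that this ratio is comparable (via Lemma~\ref{lem:dhks_est} and Theorem~\ref{thm:1}) to $\P_y(\tau_\Gamma^s>u)\,p_u^s(y-x_s)$ and that the latter is equicontinuous. But two-sided comparability with an equicontinuous family does not give equicontinuity of the original family --- comparability only controls size, not oscillation. Worse, what is actually needed is that the modulus of continuity of $u\mapsto p_u^{\Gamma,s}(x_s,y)$ be $O(G_\Gamma^s(x_s,\one))$ as $s\to\infty$ (since the denominator, constant in $u$, shrinks to $0$). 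This forces a scale factor to be extracted from the numerator, e.g.\ via a Chapman--Kolmogorov split $p_u^{\Gamma,s}(x_s,y)=\int_\Gamma p_{a/3}^{\Gamma,s}(x_s,z)\,p_{u-a/3}^{\Gamma,s}(z,y)\,dz$, after which one still needs a modulus of continuity for $v\mapsto p_v^{\Gamma,s}(z,y)$ \emph{uniform} in $z\in\Gamma$ and $s$. That in turn requires handling the Hunt-formula correction $\E_z[\tau_\Gamma^s<v;\,p_{v-\tau_\Gamma^s}^s(X^s_{\tau_\Gamma^s}-y)]$, where $v-\tau_\Gamma^s$ is arbitrarily close to $0$ and the paper's Fourier-inversion bound on $\partial_v p_v^s$ (from the proof of Lemma~\ref{prop:dhk}) does not apply. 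None of this is addressed by ``standard $u$-regularity of $p_u^s$ together with Hunt's formula.'' A second, lesser point: the identification $-H'(1)=C_1\,n^\alpha(y)$ is obtained by re-running the same differentiation argument in the $\alpha$-stable case, so it inherits the same gap; and the verification of the hypothesis of Lemma~\ref{lem:Gfs_limit} that $G_\Gamma^s f^s(x)=\int_t^\infty p_u^{\Gamma,s}(x,y)\,du$ is uniformly bounded on $\Gamma$ needs care in $d=1$, where one cannot use $\int_t^\infty p_u^s(0)\,du$ and must instead invoke the Green function bound of Lemma~\ref{lem:Ub}. The paper avoids all of this by never differentiating in time: it only needs pointwise convergence and uniform integrable envelopes of densities, which are much cheaper.
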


\begin{proof}
	The proof follows directly the proof of \cite[Theorem 3.3]{BoPaWa2018}. Fix $x \in \Gamma$ and $t>0$. Consider the  family of measures defined as follows,
	\begin{equation}\label{eq:41}
		\mu_t^s(A) = \int_{A} f_t^s(y)\,dy = \frac{\int_A p_t^{\Gamma,s}(x_s,y)\,dy}{\P_{x_s} (\tau_{\Gamma}^s>t)}, \quad A \subset \Rd,
	\end{equation}
	with $x_s = \psi^{-1}(1/s)x$. We claim that the family $\{\mu_t^s \colon s \geq s_0 \}$, where $s_0$ will be specified later in the proof, is tight. Indeed, proceeding as in the proof of Proposition \ref{prop:nu_h_conv} and applying \eqref{eq:36} we have that
	\begin{equation}\label{eq:48}
		\left( \psi^{-1,s}(1/t)\right)^d \wedge t\nu^s(y) \lesssim 1 \wedge \nu^s(y) \lesssim 1 \wedge |y|^{-d-\alpha/2}
	\end{equation}
	$s \geq s_0$, with the implied constant independent of $s$. It follows from Lemma \ref{lem:hks_est} and \ref{lem:dhks_est}, and \eqref{eq:nu_comp} that we may bound the densities $f_t^s$ by a fixed integrable function, i.e., 
	\begin{equation}\label{eq:35}
		\frac{p_t^{\Gamma,s}(x_s,y)}{\P_{x_s}(\tau_{\Gamma}^s>t)} \approx \P_y(\tau_{\Gamma}^s>t) p_t^s(y-x_s) \lesssim p_t^s(y) \lesssim 1 \wedge |y|^{-d-\alpha/2}, \quad s\geq s_0, \ y \in \Rd.
	\end{equation}
	Recall that $t$ is fixed and the implied constant depends only on $d$, $M$ and $\beta$. Consider an arbitrary sequence $\{s_n\}$ with $\lim_{n \to \infty}s_n=\infty$. By the Prokhorov theorem, there is a subsequence $\{s_{n_k}\}$ such that $\mu_t^{s_{n_k}}$ converges weakly to a probability measure $\mu_t$ as $k \to \infty$.
	
	Let $\phi \in C_c^{\infty}(\Gamma)$ and set $u_{\phi}^s = -\calL^s \phi$. For every $s \geq 1$, $u_{\phi}^s$ is bounded, continuous, and, in view of Proposition \ref{prop:G_anihilation}, $G_{\Gamma}^su_{\phi}^s(x)=\phi(x)$ for $x \in \Rd$. By \eqref{eq:47},
	\begin{equation}\label{eq:33}
		\big| \calL^s\phi(y) \big| \lesssim 1 \wedge \nu^s(y), \quad y \in \Rd.
	\end{equation}

	In view of Lemma \ref{lem:hks_est} and \eqref{eq:36}, $|u_{\phi}^s(x)| \leq c p_1^s(x)$. Note that $c$ here may depend on $s$ but it is irrelevant for the proof of \eqref{eq:78}. It follows that
	\[
	P_t^{\Gamma,s}|u_{\phi}^s|(x)\leq c p_{t+1}^s(x),
	\]
	and consequently, for every $x \in \Gamma$,
	\[
	G_{\Gamma}^s P_t^{\Gamma,s} |u_{\phi}^s|(x) \leq c\int_{\Rd} G_{\Gamma}^s(x,y)p_{t+1}^s(y)\,dy < \infty,
	\]
	where the last inequality follows from Lemma \ref{lem:hks_est} and \eqref{eq:weightedL1}. By the Fubini-Tonelli theorem, for every $s\geq 1$,
	\begin{equation}\label{eq:78}
		G_{\Gamma}^s P_t^{\Gamma,s} u_{\phi}^s(x) = P_t^{\Gamma,s} G_{\Gamma}^s u_{\phi}^s(x) = P_t^{\Gamma,s} \phi(x).
	\end{equation}
	
	Next, observe that by {\bf A1}, \eqref{eq:nu_conv} and \eqref{eq:48}, for $s$ large enough,
	\[
	\big| \phi(x+y)-\phi(x)\big|\nu^s(y) \leq \norm{\phi}_{C^2(\Rd)}(|y|^2 \wedge 1)\nu^s(y) \lesssim |y|^{2-d-\beta}\ind_{B_1}(y) +  |y|^{-d-\alpha/2}\ind_{B_1^c}(y),
	\]
	with the implied constant independent of $s$. Thus, the dominated convergence theorem entails that for every $y \in \Gamma$,
	\[
	\lim_{s \to \infty} u_{\phi}^s(y) = u_{\phi}^{\alpha}(y) = -\Delta^{\alpha/2} \phi(y).
	\]
	Moreover, if $R>0$ is such that $\dist(B_R^c,\supp \phi) \geq 1$ and $x \in B_R^c$ then in view of \eqref{eq:48}  we may refine \eqref{eq:33} so that
	\[
	\big| \calL^s \phi(y) \big| \leq c\left( 1 \wedge |y|^{-d-\alpha/2}\right), \quad y \in \Rd,	
	\]
	for $s$ large enough with $c=c(d,M,\beta)$.	Thus, by Lemma \ref{lem:hks_est}, \eqref{eq:36} and the dominated convergence theorem, for every $x \in \Gamma$,
	\[
	\lim_{s \to \infty} P_t^{\Gamma,s}u_{\phi}^s(x) = P_t^{\Gamma,\alpha}u_{\phi}^{\alpha}(x).
	\]
	The same argument yields that $P_t^{\Gamma,s}u_{\phi}^s$ are uniformly bounded and that $G_{\Gamma}^s P_t^{\Gamma,s}u_{\phi}^s= P_t^{\Gamma,s}\phi$ are uniformly bounded. We also get that
	\[
	\lim_{s \to \infty} G_{\Gamma}^s P_t^{\Gamma,s}u_{\phi}^s(x) = \lim_{s \to \infty} P_t^{\Gamma,s}\phi(x) = P_t^{\Gamma,\alpha}\phi(x) = G_{\Gamma}^{\alpha} P_t^{\Gamma,\alpha}u_{\phi}^{\alpha}(x), \quad x \in \Gamma.
	\]
	Thus, by Lemma \ref{lem:Gfs_limit},
	\[
	\lim_{s \to \infty} \frac{P_t^{\Gamma,s}\phi(x_s)}{G_{\Gamma}^s(x_s,\one)} = \lim_{s \to \infty} \frac{G_{\Gamma}^s P_t^{\Gamma,s} u_{\phi}^s(x_s)}{G_{\Gamma}^s(x_s,\one)} = \int_{\Gamma} M_0^{\alpha}(y) P_t^{\Gamma,\alpha} u_{\phi}^{\alpha}(y)\,dy,
	\]
	If we denote $\mu_t^s(\phi) = \int_{\Gamma}\phi(y)\,\mu_t^s(dy)$, then by Theorem \ref{thm:1} and the identity above we conclude that there is a finite limit
	\[
	\lim_{s \to \infty} \mu_t^s(\phi) = \lim_{s \to \infty} \frac{P_t^{\Gamma,s}\phi(x_s)}{\P_{x_s}(\tau_{\Gamma}^s>t)} = \frac{\int_{\Gamma} M_0^{\alpha}(y)P_t^{\Gamma,\alpha}u_{\phi}^{\alpha}(y)\,dy}{\int_{\Gamma}M_0^{\alpha}(y)P_t^{\Gamma,\alpha}\kappa_{\Gamma}^{\alpha}(y)\,dy}.
	\] 
	In particular, $\mu_t(\phi) = \lim_{k \to \infty}\mu_t^{s_{n_k}}(\phi)$ does not depend on the choice of subsequence $s_{n_k}$. Therefore, $\mu_t^s$ converges weakly to $\mu_t$ as $s \to \infty$. 
	
	Moreover, we observe that for $t=1$ the limit measure $\mu_1$ is exactly the same as the one in the proof of \cite[Theorem 3.3]{BoPaWa2018}. By a repetition of the arguments in the proofs of \cite[(3.16), Theorem 3.1 and Theorem 3.3]{BoPaWa2018} one can conclude that the same holds true for every $t>0$, i.e., the measures
	\[
	\widetilde{\mu}_{x,t}(A) = \frac{\int_A p_t^{\Gamma,\alpha}(x,y)\,dy}{\P_x(\tau_{\Gamma}^{\alpha}>t)}	
	\]
	in \cite{BoPaWa2018} converge weakly as $\Gamma \ni x \to 0$ to the same measure $\mu_t$.
	It is therefore appropriate to use the notation $\mu_t^{\alpha}$ instead of $\mu_t$. Moreover, also from the proof of \cite[Theorem 3.3]{BoPaWa2018} we may conclude that the limit
	\[
	f_t^{\alpha}(y) = \lim_{\Gamma \ni x \to 0} \frac{p_t^{\Gamma,\alpha}(x,y)}{\P_x(\tau_{\Gamma}^{\alpha}>t)}
	\] 
	exists and is the density function of the measure $\mu_t^{\alpha}$. Note here that $f_1^{\alpha}=n^{\alpha}$. 	
	We now prove that
	\begin{equation}\label{eq:46}
		\lim_{s \to \infty} f_{1}^s(y) = \lim_{s \to \infty} \frac{p_1^{\Gamma,s}(x_s,y)}{\P_{x_s}(\tau_{\Gamma}^s>1)} = n^{\alpha}(y), \quad y \in \Gamma.
	\end{equation}
	Indeed, fix $y \in \Gamma$ and we denote $\phi_y^s(\,\cdot\,) = p_{1/2}^{\Gamma,s}(\,\cdot\,,y)$. By the Chapman-Kolmogorov equation,
	\[
	p_1^{\Gamma,s}(u,y) = \int_{\Gamma} p_{1/2}^{\Gamma,s}(u,z) p_{1/2}^{\Gamma,s}(z,y)\,dz = P_{1/2}^{\Gamma,s}\phi_y^s(u), \quad u \in \Rd.
	\]
	Thus,
	\[
	\frac{p_1^{\Gamma,s}(x_s,y)}{\P_{x_s}(\tau_{\Gamma}^s>1)} = \frac{P_{1/2}^{\Gamma,s} \phi_y^s(x_s)}{\P_{x_s}(\tau_{\Gamma}^s>1)} = \mu_{1/2}^s (\phi_y^s).
	\]
	We claim that
	\begin{equation}\label{eq:42}
		\lim_{s \to \infty} \mu_{1/2}^s(\phi_y^s) = \mu_{1/2}^{\alpha}(\phi_y^{\alpha}), 
	\end{equation}
	where $\phi_y^{\alpha}(\,\cdot\,) = p_{1/2}^{\Gamma,\alpha}(\,\cdot\,,y)$. Since, by the weak convergence of measures, $\lim_{s \to \infty} \mu_{1/2}^s(\phi_y^{\alpha}) = \mu_{1/2}^{\alpha}(\phi_y^{\alpha})$, it remains to prove that $\mu_{1/2}^s(\phi_y^s - \phi_y^{\alpha}) \to 0$ as $s \to \infty$. Since, in view of Lemma \ref{lem:hks_est} and \eqref{eq:36}, $\phi_y^s$ and $\phi_y^{\alpha}$ are uniformly bounded by a constant $c_1$ independent of $s$, the dominated convergence theorem implies that
	\[
	\lim_{s \to \infty} \mu_{1/2}^{\alpha} \big( \phi_y^s - \phi_y^{\alpha} \big) = 0.
	\]
	Thus, in order to get \eqref{eq:42} we need to show that
	\begin{equation}\label{eq:45}
		\lim_{s \to \infty}  \big( \mu_{1/2}^s \big( \phi_y^s -\phi_y^{\alpha}\big) - \mu_{1/2}^{\alpha} \big( \phi_y^s -\phi_y^{\alpha}\big) \big) = 0.
	\end{equation}
	Recall that $\mu_{1/2}^s(dz) = f_{1/2}^s(z)\,dz$ and $\mu_{1/2}^{\alpha}(dz) = f_{1/2}^{\alpha}(z)\,dz$. Thus, we may write
	\[
	\Big| \mu_{1/2}^s \big( \phi_y^s -\phi_y^{\alpha}\big) - \mu_{1/2}^{\alpha} \big( \phi_y^s -\phi_y^{\alpha}\big) \big| \leq \int_{\Gamma} \Big| \phi_y^s(z) - \phi_y^{\alpha}(z) \big| \big| f_{1/2}^s(z) - f_{1/2}^{\alpha}(z) \big|\,dz.
	\]
	By \eqref{eq:35} and \cite[(4.16)]{BoPaWa2018} we see that $f_{1/2}^s$ and $f_{1/2}^{\alpha}$ are uniformly bounded by a fixed bounded integrable function $g(z) = 1 \wedge |z|^{-d-\alpha+\delta}$, if only $s$ is large enough. Thus, for every $R>0$ we have, by Lemma \ref{prop:dhk}, \eqref{eq:36} and the dominated convergence theorem,
	\begin{align}\label{eq:43}\begin{aligned}
			\limsup_{s \to \infty} &\int_{\Gamma_R} \big| \phi_y^s(z) - \phi_y^{\alpha}(z) \big| \big| f_{1/2}^s(z) - f_{1/2}^{\alpha}(z) \big|\,dz \\ \leq c &\limsup_{s \to \infty} \int_{\Gamma_R} \big| \phi_y^s(z) - \phi_y^{\alpha}(z) \big|\,dz = 0.\end{aligned}
	\end{align}
	Moreover,
	\begin{equation}\label{eq:44}
		\int_{\Gamma \setminus \Gamma_R} \big| \phi_y^s(z) - \phi_y^{\alpha}(z) \big| \big| f_{1/2}^s(z) - f_{1/2}^{\alpha}(z) \big|\,dz \leq 4c_1\int_{\Gamma \setminus \Gamma_R} g(z)\,dz.
	\end{equation}
	Now, \eqref{eq:43} together with \eqref{eq:44} yield that for every $R>0$,
	\begin{align*}
		\limsup_{s \to \infty} \big| \mu_{1/2}^s \big( \phi_y^s -\phi_y^{\alpha}\big) - \mu_{1/2}^{\alpha} \big( \phi_y^s -\phi_y^{\alpha}\big) \big| \leq 4c_1\int_{\Gamma \setminus \Gamma_R} g(z)\,dz.
	\end{align*}
	By letting $R$ to infinity, we obtain \eqref{eq:45}, so \eqref{eq:42} follows.
	
	We thus have  proved that the left-hand side limit in \eqref{eq:46} exists for all $y \in \Gamma$. Let us denote $\lim_{s \to \infty} f_1^s(y) = \widetilde{n}(y)$ for $y \in \Gamma$. By weak convergence, \eqref{eq:35} and the dominated convergence theorem, for every bounded continuous function $\phi$ we have
	\[
	\int_{\Gamma} n^{\alpha}(y)\phi(y)\,dy = \mu^{\alpha}(\phi) =  \lim_{s \to \infty} \int_{\Gamma} \frac{p_1^{\Gamma,s}(x_s,y)}{\P_{x_s}^s(\tau_{\Gamma}^s>1)}\phi(y)\,dy = \int_{\Gamma} \widetilde{n}(y)\phi(y)\,dy. 
	\]
	Therefore, $n^{\alpha} = \widetilde{n}$ and \eqref{eq:46} follows.
\end{proof}
We are ready to prove the main result of the paper.
\begin{proof}[Proof of Theorem \ref{thm:Yaglom}]	
	Using \eqref{eq:dhks}, \eqref{eq:surv} and changing variables we get, for $x \in \Gamma$ and $s \geq 1$,
	\begin{align*}
		\P_x \big( \psi^{-1}(1/s)X_s \in A | \tau_{\Gamma}>s \big) &= \frac{\P_x \big( \psi^{-1}(1/s)X_s \in A, \tau_{\Gamma}>s  \big)}{\P_x (\tau_{\Gamma}>s)} \\ &= \frac{\int_{A/\psi^{-1}(1/s)}p_s^{\Gamma}(x,y)\,dy}{\int_{\Gamma} p_s^{\Gamma}(x,y)\,dy} \\ &= \frac{\int_{A} p_1^{\Gamma,s}(x_s,y)\,dy}{\int_{\Gamma}p_1^{\Gamma,s}(x_s,y)\,dy} \\ &= \frac{\int_{A} p_1^{\Gamma,s}(x_s,y)\,dy}{\P_{x_s}^s(\tau_{\Gamma}^s>1)} \\ &= \mu_1^s(A).
	\end{align*}
By \eqref{eq:46} and \eqref{eq:35} and an application of \cite[Lemma 4.1]{BoPaWa2018} we conclude that
	\[
	\lim_{s \to \infty} \P_x \big( \psi^{-1}(1/s)X_s \in A | \tau_{\Gamma}>s \big) = \int_A n^{\alpha}(y)\,dy = \mu^{\alpha}(A).
	\]
	The proof is complete.
\end{proof}
\begin{example}\label{e.da}
Consider the
L\'evy density $\nu(x)=\mathcal{A}(d,\alpha)  \ln^\beta(e+|x|)/|x|^{d+\alpha}$ on $\Rd$, with $\beta\in\mathbb{R}$ and, of course, $\alpha\in (0,2)$. Then {\bf A} is satisfied, for instance $\psi(r)\sim r^\alpha \ln^\beta(1/r)$ for small $r>0$, see \cite[Proposition 2]{WCTGBT}. In this case, the rescaling in Theorem \ref{thm:Yaglom} is by $\psi^{-1}(1/s) \sim s^{-1/\alpha} \ln^{-\beta/\alpha}(s^{1/\alpha})$, which is qualitatively different than in \eqref{e.Ysl}.
\end{example}

\appendix

\newcommand{\Q}{\mathbb{Q}}
\section{Weak convergence and continuity in the Skorokhod topology}
This appendix is devoted to weak convergence and continuity in the Skorokhod topology. 

Let us recall the Skorokhod topology and its basic properties. The main references here are the books \cite{JS87} and Gihman and Skorohod \cite{GS80}. 
In view of \cite[Theorem VI.1.14]{JS87}, the space $\calD(\Rd) = \calD_{[0, \infty)}(\R^d)$ of all of c\`{a}dl\`{a}g functions $\omega \colon [0,\infty) \mapsto \R^d$,  may be endowed with a topology for which it is a complete separable metric space. The notion of convergence is described as follows: let $\Lambda$ be a set of strictly increasing continuous functions $\lambda \colon [0,\infty) \mapsto [0,\infty)$ such that $\lambda(0)=0$ and $\lambda(t) \to \infty$ as $t \to \infty$. Then $\omega_n$ converges to $\omega$ in $\calD(\Rd)$ as $n \to \infty$ if and only if there is a sequence $\{\lambda_n\colon  n \geq 1 \} \subset \Lambda$ such that
\begin{equation}
  \label{e:lambdant}
  \lim_{n \to \infty} \sup_{t \in [0,\infty)} |\lambda_n(t)-t|=0,
\end{equation}
and
\begin{equation}
  \label{e:omegant}
  \lim_{n \to \infty} \sup_{t \leq T} \big| \omega_n (\lambda_n(t))-\omega_n \big|=0 \quad \text{for every } T>0. 
\end{equation}

Let $\bfY$ be the canonical projection or a coordinate process in $\calD(\Rd)$, i.e.
\[
Y_t = Y_t(\omega)=\omega(t)
\]
for $t \geq 0$ and $\omega \in \calD(\Rd)$. As before, we define for open set $D\subset \Rd$, 
\[
  \tau_D(\omega) = \inf \{ t > 0 \colon \omega(t) \not\in D \}. 
\]

Let $\P_x$ be a probability measure on $\calD(\Rd)$ such that $\P_x \left( Y_0 = x \right) = 1$. We assume throughout this appendix that the process $\bfY$ is quasi-left-continuous under $\P_x$ and for every $x \in D$, $\P_x(Y_{\tau_D} \in \partial D)=0$.

\begin{proposition}
  \label{prop:continuity}
  Let $\D$ be the set of discontinuities of the functionals $\tau_{D}$,
  $Y_{\tau_{D-}}$ and $Y_{\tau_{D}}$ on $\calD(\R^d)$. Then
  $\P_x(\D) = 0$ for all $x \in D$. 
\end{proposition}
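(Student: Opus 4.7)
My plan is to reduce continuity of the three functionals to a pathwise transversality condition at $\tau_D(\omega)$, and then verify that this condition holds $\P_x$-a.s.\ by a quasi-left-continuity argument that invokes both standing hypotheses precisely where they are needed.

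The input I would quote from Skorokhod-topology theory (see, e.g., Jacod and Shiryaev \cite[Propositions~VI.2.7 and VI.2.11]{JS87} or Gihman and Skorohod \cite{GS80}) is: if $\omega\in\calD(\R^d)$ satisfies
\begin{equation}\label{eq:transv}
\tau_D(\omega)<\infty,\qquad \omega(\tau_D(\omega))\in(\overline{D})^c,\qquad \inf_{0\le t<\tau_D(\omega)}\dist(\omega(t),D^c)>0,
\end{equation}
then each of $\tau_D$, $Y_{\tau_D-}$ and $Y_{\tau_D}$ is continuous at $\omega$. Indeed, the middle condition immediately yields $\limsup_n\tau_D(\omega_n)\le\tau_D(\omega)$, since $\omega_n(\lambda_n(\tau_D(\omega)))\to \omega(\tau_D(\omega))\in(\overline{D})^c$ for the time-changes $\lambda_n$ in \eqref{e:lambdant}--\eqref{e:omegant}; the third condition supplies the matching lower bound $\liminf_n\tau_D(\omega_n)\ge\tau_D(\omega)$ because the uniform distance of $\omega(t)$ from $D^c$ on $[0,\tau_D(\omega))$ is transferred to $\omega_n(\lambda_n(t))$; and the genuine jump of $\omega$ at $\tau_D(\omega)$ (automatic since $\omega(\tau_D-)\in\overline{D}$ while $\omega(\tau_D)\in(\overline{D})^c$) is preserved by $\lambda_n$, yielding continuity of $Y_{\tau_D-}$ and $Y_{\tau_D}$ as well.

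It therefore suffices to show that $\P_x$-a.s.\ the path of $\bfY$ satisfies \eqref{eq:transv} on $\{\tau_D<\infty\}$; on $\{\tau_D=\infty\}$ the announcing argument below shows $\bfY$ stays at positive distance from $\partial D$ on every finite interval, which suffices for continuity of all three functionals on that event. The middle condition of \eqref{eq:transv} is automatic by the standing hypothesis, because right-continuity forces $Y_{\tau_D}\in D^c=\partial D\cup(\overline{D})^c$. For the third condition, introduce the approach times
\[
T_n=\inf\{t\ge 0:\dist(Y_t,D^c)\le 1/n\},\qquad n\ge 1,
\]
each of which is a stopping time as a first entrance into a closed set, and observe $T_n\uparrow T\le\tau_D$. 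On the event $\{T_n<T\text{ for every }n\}\cap\{T<\infty\}$, quasi-left-continuity of $\bfY$ gives $Y_{T_n}\to Y_T$ a.s., and since $\dist(Y_{T_n},D^c)\le 1/n$ while $Y_{T_n}\in D$ (because $T_n<\tau_D$), the limit satisfies $Y_T\in\overline{D}\cap D^c=\partial D$; consequently $T=\tau_D$ and $Y_{\tau_D}\in\partial D$, a $\P_x$-null event by hypothesis. Thus $\P_x$-a.s.\ there is a random index $n$ with $T_n=\tau_D$, whence $\dist(Y_{\tau_D-},D^c)\ge 1/n>0$, and the third condition of \eqref{eq:transv} holds.

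The main obstacle is the Skorokhod-topology continuity criterion articulated in the first displayed block: translating the jump-preserving property of the time-changes $\lambda_n$ and the uniform-distance condition into quantitative bounds on the perturbed exit times requires some technical care near $\partial D$. Once that topological lemma is accepted, the probabilistic part is short and uses both standing hypotheses at the critical point: quasi-left-continuity at the announcing times $T_n$ identifies the only possible exceptional event with $\{Y_{\tau_D}\in\partial D\}$, which is $\P_x$-null by the second hypothesis.
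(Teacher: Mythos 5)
Your proof is correct and takes essentially the same route as the paper: both reduce continuity to a pathwise transversality condition at $\tau_D$ that is verified $\P_x$-a.s.\ by applying quasi-left-continuity to the announcing stopping times (your $T_n$ are exactly the paper's $\tau_m$, the first exit times from $D_m=\{x\in D:\dist(x,D^c)>1/m\}$) together with the null hypothesis $\P_x(Y_{\tau_D}\in\partial D)=0$. The only difference is presentational: the paper proves the Skorokhod-topology continuity criterion inline, via an $\epsilon$-argument for $\tau_D$ and \cite[Proposition~VI.2.1]{JS87} for $Y_{\tau_D-}$ and $Y_{\tau_D}$, whereas you invoke it as a black-box lemma and sketch it (you may want to verify that the cited propositions VI.2.7 and VI.2.11 of \cite{JS87} actually contain the statement you use; the paper relies only on VI.2.1).
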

\begin{proof}
  For $m \geq 1$ we define $D_m = \{ x \in D\colon \dist(x,\, D^c) >
  1/m\}$ and $\tau_m = \inf\{ t\colon Y_t \notin D_m \}$. Of course, $\tau_m \leq \tau_D$. If $\tau_{\infty} := \lim_{m \to \infty} \tau_m = \infty$, then 
  $\tau_{\Gamma} = \tau_{\infty}$. On $\{\tau_{\infty} < \infty\}$  we have from
  the quasi-left-continuity of 
  $\bfY$ that $\lim_{m \to \infty} Y_{\tau_m} = Y_{\tau_{\infty}}$ a.s. It follows that
  $Y_{\tau_{\infty}} \in D^c$, hence $\tau_{\infty} \geq
  \tau_D$ and so $\tau_{\infty} =
  \tau_D$.  
  Thus $\lim_{m \to \infty} \tau_m \uparrow \tau_{\Gamma}$. 
  
  By our assumption, $\P_x \left( Y_{\tau_{D} -} \in D,
    Y_{\tau_D} \in \overline{D}^c \right) = 1$. Fix $\omega \in
  \calD(\R^d)$ such that $Y_{\tau_{D}-}(\omega) \in D$, $Y_{\tau_{D}}
  \in \overline{D}^c$ and $\lim_{m \to \infty} \tau_m(\omega) =
  \tau_{D}(\omega)$. We will prove that the functionals $\tau_{D}$, $Y_{\tau_D -}$ and
  $Y_{\tau_{D}}$ are continuous at $\omega$.

  Let $\{ \omega_n \}$ be a sequence in $\calD(\Rd)$ such that $\omega_n \to \omega$ as $n \to \infty$. By the definition of the Skorokhod topology, there exist $\lambda_n \in \Lambda$
  such that $\sup_{s > 0} \left| \lambda_n(s) 
    - s \right| \to 0$ and $\sup_{s \leq T} \left| Y_{\lambda_n(s)}(\omega_n) -
    Y_s(\omega) \right| \to 0$ for every $T > 0$. Let $t = \tau_{D}(\omega)$
  and $t_n = \lambda_n(t)$. Then $t_n \to t$, $Y_{t_n}(\omega_n) \to
  Y_t(\omega)$ and $Y_{t_n-}(\omega_n) \to Y_{t-}(\omega)$ as $n \to \infty$.

  Since $Y_t(\omega) \in \overline{D}^c$, we have
  $Y_{t_n}(\omega_n) \in \overline{D}^c$ so
  $\tau_{D}(\omega_n) \leq t_n$ for $n$ large enough. Therefore,
  \begin{equation}
    \label{e:tauup}
    \limsup_{n\to\infty} \tau_{D}(\omega_n) \leq \tau_{D}(\omega). 
  \end{equation}

  For every $\varepsilon > 0$, there exists $m_0$ such that $t - \varepsilon < \tau_{m_0}(\omega) \leq t$. Note that for $n$ sufficiently large,  $\sup_{u \leq t - \varepsilon} \left| \lambda_n(u) - u \right| < \varepsilon$ and $\sup_{u \leq t - \varepsilon} \left| Y_{\lambda_n(u)}(\omega_n) - Y_u(\omega) \right| < 1/m_0$. Since $Y_u(\omega) \in D_{m_0}$ we have $Y_{\lambda_n(u)}(\omega_n) \in D$ for every $u \leq t - \varepsilon$. Therefore $\tau_D(\omega_n) \geq \lambda_n(t - \varepsilon) \geq t - 2 \varepsilon$. As $\varepsilon$ is arbitrary, $\liminf_{n \to \infty} \tau_D(\omega_n) \geq t$. This and \eqref{e:tauup} prove the continuity of $\tau_D$ at $\omega$. 

Note that we have shown that $\lim_{n \to \infty} \tau_D(\omega_n) = t$ and for $n$ large enough, $\tau_D(\omega_n) \leq t$. Applying \cite[Proposition~VI.2.1 (b.2)]{JS87} we get that $\lim_{n\to\infty} Y_{\tau_{D}-}(\omega_n) = Y_{\tau_{D}-}(\omega)$.

If $\tau_{D}(\omega_{n_k}) < t_{n_k}$ for a subsequence $\{n_k\colon k \in \N \}$, then, by
\cite[Proposition~VI.2.1 (b.1)]{JS87}, we have $\lim_{k\to\infty}
Y_{\tau_{D}}(\omega_{n_k}) = Y_{\tau_{D}-}(\omega)$. This contradicts
the facts that $Y_{\tau_{D}}(\omega_n) \in D^c$ and
$Y_{\tau_{D}-}(\omega) \in D$. So we must have that
$\tau_{D}(\omega_n) = t_n$ for $n$ large enough. Therefore, $\lim_{n\to
  \infty} Y_{\tau_{D}}(\omega_n) = Y_{\tau_{D}}(\omega)$.
\end{proof}

\begin{corollary}
	\label{cor:boundary}
	For every $t > 0$, $\P_x \left( \partial \left\{ \tau_{D} > t
	\right\} \right) = 0$. 
\end{corollary}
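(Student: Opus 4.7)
The plan is to deduce the corollary directly from Proposition~\ref{prop:continuity} together with stochastic continuity of $\bfY$ at the fixed time $t$. First I would identify the Skorokhod-topological boundary $\partial\{\tau_D > t\}$ with the set of points at which the indicator $\mathbf{1}_{\{\tau_D > t\}}\colon \calD(\Rd)\to\{0,1\}$ fails to be continuous. A brief inspection shows that if $\tau_D$ is itself continuous at $\omega$ and $\tau_D(\omega)\neq t$, then $\mathbf{1}_{\{\tau_D>t\}}$ is continuous at $\omega$; consequently
\[
\partial\{\tau_D > t\} \;\subset\; \{\omega : \tau_D \text{ is discontinuous at } \omega\}\;\cup\;\{\tau_D = t\}.
\]
By Proposition~\ref{prop:continuity}, the first set carries no $\P_x$-mass, so everything reduces to showing $\P_x(\tau_D = t) = 0$.

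To establish this, I would split along the standing assumption $\P_x(Y_{\tau_D-}\in D,\,Y_{\tau_D}\in\overline{D}^c)=1$, which in particular gives $\P_x(Y_{\tau_D-}=Y_{\tau_D})=0$. On the complementary event $\{Y_{\tau_D-}\neq Y_{\tau_D}\}$, the identity $\tau_D = t$ forces $Y_{t-}\neq Y_t$, so
\[
\P_x(\tau_D = t) \;\leq\; \P_x(Y_{\tau_D-}=Y_{\tau_D})\;+\;\P_x(Y_{t-}\neq Y_t).
\]
The second summand vanishes by stochastic continuity at the fixed time $t$, which is a direct consequence of quasi-left-continuity applied to the constant stopping time $t$ with announcing sequence $t-1/n\uparrow t$. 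Combining these two facts yields $\P_x(\tau_D = t)=0$.

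No serious obstacle is anticipated: the argument is essentially a two-line corollary of Proposition~\ref{prop:continuity}. The only point deserving care is the topological inclusion above, which follows because for a Skorokhod-convergent sequence $\omega_n\to\omega$ with $\tau_D$ continuous at $\omega$ and $\tau_D(\omega)\neq t$, one has $\tau_D(\omega_n)\to \tau_D(\omega)$ and hence $\mathbf{1}_{\{\tau_D>t\}}(\omega_n)=\mathbf{1}_{\{\tau_D>t\}}(\omega)$ for all large $n$.
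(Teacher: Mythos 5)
Your proof is correct and follows essentially the same route as the paper's: both arguments reduce $\partial\{\tau_D>t\}$ to $\D\cup\{\tau_D=t\}$ via Proposition~\ref{prop:continuity}, and both kill $\{\tau_D=t\}$ using $Y_{t-}=Y_t$ a.s.\ together with the non-degeneracy of the exit position. The only cosmetic difference is the intermediate bound: the paper writes $\P_x(\tau_D=t)\le\P_x(Y_{\tau_D}\in\partial D)$ directly, while you route through $\P_x(Y_{\tau_D-}=Y_{\tau_D})=0$; since $\{Y_{\tau_D-}=Y_{\tau_D}\}\subset\{Y_{\tau_D}\in\partial D\}$ up to a null set (because $Y_s\in D$ for $s<\tau_D$ gives $Y_{\tau_D-}\in\overline D$, and $Y_{\tau_D}\in D^c$), the two bounds are equivalent consequences of the appendix's standing assumption.
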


\begin{proof}
	The boundary of $\left\{ \tau_{D} > t
	\right\}$ in the Skorokhod topology is contained in $\D \cup \left\{ \tau_{D} = t \right\}$, where
	$\D$ is the set of discontinuities of $\tau_{D}$. By
	Proposition~\ref{prop:continuity} we have that $\P_x(\D) = 0$. Since $Y_{t-} =
	Y_t$ a.s., we have that $\P_x(\tau_{D} = t) \leq
	\P_x(Y_{\tau_{D}} \in \partial D) = 0$. 
\end{proof}

Now assume that, under $\P_x$, $\bfY$ is a pure-jump L\'evy process in $\R^d$ with L\'evy exponent $\psi$. For $s \geq 1$ let $\{ \bfY^s \colon t \geq 0 \}$ be a L\'evy processes with L\'evy exponent $\psi^s$. Set
\[
  \tau_D^s = \inf \{ t > 0 \colon Y^s_t \not\in D \}. 
\]
Suppose that for each $\xi \in \R^d$, $\psi^s(\xi) \to \psi(\xi)$ as $s \to \infty$. Then $Y^s_1$ converges in distribution to $Y_1$. It follows from \cite[Corollary VII.3.6]{JS87} that $\bfY^s$ converges in distribution to $\bfY$ in the Skorokhod space $\calD(\R^d)$ as $s \to \infty$. By Corollary \ref{cor:boundary} we have the following result.
\begin{corollary}
  \label{cor:surv_lim}
  Let $D$ be an open subset of $\R^d$ such that $\P_x \left( Y_{\tau_D} \in \partial D \right) = 0$. Then for every $t > 0$ and $x \in D$,
  \[
    \lim_{s \to \infty} \P_x \left( \tau_D^s > t \right) = \P_x \left( \tau_D > t \right). 
  \]
\end{corollary}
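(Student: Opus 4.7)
The plan is to deduce the corollary as a direct application of the Portmanteau theorem in the Skorokhod space, using the two ingredients already assembled in the appendix. First, as noted in the paragraph immediately preceding the statement, the pointwise convergence $\psi^s(\xi) \to \psi(\xi)$ together with \cite[Corollary VII.3.6]{JS87} upgrades to convergence in distribution of $\bfY^s$ to $\bfY$ on $\calD(\R^d)$ (under $\P_x$, by shifting the starting point from $0$ to $x$, which causes no trouble since $\psi^s$-L\'evy processes are translation invariant). Call the limit law $\Q_x$ on $\calD(\R^d)$.

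Next, I would consider the event
\[
A_t = \{\omega \in \calD(\R^d) : \tau_D(\omega) > t\}
\]
and verify that it is a $\Q_x$-continuity set, i.e.\ $\Q_x(\partial A_t) = 0$. This is precisely the content of Corollary \ref{cor:boundary}, whose hypotheses hold in our setting: $\bfY$ is a L\'evy process, hence quasi-left-continuous, and the hypothesis $\P_x(Y_{\tau_D} \in \partial D) = 0$ of the present corollary is exactly the standing assumption invoked in Proposition~\ref{prop:continuity} and transmitted to Corollary~\ref{cor:boundary}.

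With the continuity-set property in hand, the Portmanteau theorem yields
\[
\lim_{s \to \infty} \P_x(\tau_D^s > t) = \lim_{s \to \infty} \P_x(\bfY^s \in A_t) = \Q_x(A_t) = \P_x(\tau_D > t),
\]
which is the claim. The main conceptual step — showing that the first-exit functional $\tau_D$ is almost-surely continuous on $\calD(\R^d)$ at the paths of $\bfY$ — has already been performed in Proposition~\ref{prop:continuity} via the quasi-left-continuity of $\bfY$ and the exhaustion of $D$ by the inner sets $D_m$, so no further work is required. If one wanted to be completely self-contained I would additionally recall that for a L\'evy process $\bfY$ the assumption $Y_0 = x$ makes $\P_x$ well defined on $\calD(\R^d)$ and that $Y_{t-} = Y_t$ $\P_x$-a.s.\ for each fixed $t$, which is what rules out the boundary contribution $\{\tau_D = t\}$ in the proof of Corollary~\ref{cor:boundary}; but both facts are standard. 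Thus there is no real obstacle here: the corollary is the packaging of Skorokhod weak convergence with the continuity statement Corollary~\ref{cor:boundary}.
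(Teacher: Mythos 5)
Your argument is exactly the one the paper intends: weak convergence of $\bfY^s$ to $\bfY$ in $\calD(\Rd)$ from \cite[Corollary VII.3.6]{JS87}, the continuity-set property of $\{\tau_D>t\}$ from Corollary~\ref{cor:boundary}, and Portmanteau. The paper states the corollary with only the pointer ``By Corollary~\ref{cor:boundary}\ldots''; you have simply spelled that out, so the approach and content match.
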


\bibliographystyle{abbrv}
\bibliography{LY}

\begin{thebibliography}{10}

\bibitem{MR3498004}
A.~Asselah, P.~A. Ferrari, P.~Groisman, and M.~Jonckheere.
\newblock Fleming--{V}iot selects the minimal quasi-stationary distribution:
  {T}he {G}alton--{W}atson case.
\newblock {\em Ann. Inst. Henri Poincar\'e Probab. Stat.}, 52(2):647--668,
  2016.

\bibitem{BB04}
R.~Ba{\~n}uelos and K.~Bogdan.
\newblock Symmetric stable processes in cones.
\newblock {\em Potential Anal.}, 21(3):263--288, 2004.

\bibitem{BBLPPT}
N.~Bean, L.~Bright, G.~Latouche, C.~Pearce, P.~Pollett, and P.~Taylor.
\newblock {The quasi-stationary behavior of quasi-birth-and-death processes}.
\newblock {\em The Annals of Applied Probability}, 7(1):134 -- 155, 1997.

\bibitem{BPT98}
N.~Bean, P.~Pollett, and P.~Taylor.
\newblock The quasistationary distributions of level-independent
  quasi-birth-and-death processes.
\newblock {\em Communications in Statistics. Stochastic Models},
  14(1-2):389--406, 1998.

\bibitem{BPT00}
N.~Bean, P.~Pollett, and P.~Taylor.
\newblock Quasistationary distributions for level-dependent
  quasi-birth-and-death processes.
\newblock {\em Communications in Statistics. Stochastic Models},
  16(5):511--541, 2000.

\bibitem{BOP19}
N.~G. Bean, M.~M. O'Reilly, and Z.~Palmowski.
\newblock Yaglom limit for stochastic fluid models.
\newblock {\em Adv. Appl. Probab.}, 53(3):1--35, 2021.

\bibitem{MR1232850}
J.~Bertoin.
\newblock Splitting at the infimum and excursions in half-lines for random
  walks and {L}\'evy processes.
\newblock {\em Stochastic Process. Appl.}, 47(1):17--35, 1993.

\bibitem{MR1331218}
J.~Bertoin and R.~A. Doney.
\newblock On conditioning a random walk to stay nonnegative.
\newblock {\em Ann. Probab.}, 22(4):2152--2167, 1994.

\bibitem{RegularVariation89}
N.~H. Bingham, C.~M. Goldie, and J.~L. Teugels.
\newblock {\em Regular variation}, volume~27 of {\em Encyclopedia of
  Mathematics and its Applications}.
\newblock Cambridge University Press, Cambridge, 1989.

\bibitem{KBTGMR10}
K.~Bogdan, T.~Grzywny, and M.~Ryznar.
\newblock Heat kernel estimates for the fractional {L}aplacian with {D}irichlet
  conditions.
\newblock {\em Ann. Probab.}, 38(5):1901--1923, 2010.

\bibitem{KBTGMR-jfa}
K.~Bogdan, T.~Grzywny, and M.~Ryznar.
\newblock Density and tails of unimodal convolution semigroups.
\newblock {\em J. Funct. Anal.}, 266(6):3543--3571, 2014.

\bibitem{KBTGMR-dhk}
K.~Bogdan, T.~Grzywny, and M.~Ryznar.
\newblock Dirichlet heat kernel for unimodal {L}\'evy processes.
\newblock {\em Stochastic Process. Appl.}, 124(11):3612--3650, 2014.

\bibitem{KBTGMR-ptrf}
K.~Bogdan, T.~Grzywny, and M.~Ryznar.
\newblock Barriers, exit time and survival probability for unimodal {L}\'evy
  processes.
\newblock {\em Probab. Theory Related Fields}, 162(1-2):155--198, 2015.

\bibitem{BKK15}
K.~Bogdan, T.~Kumagai, and M.~Kwa{\'s}nicki.
\newblock Boundary {H}arnack inequality for {M}arkov processes with jumps.
\newblock {\em Trans. Amer. Math. Soc.}, 367(1):477--517, 2015.

\bibitem{BoPaWa2018}
K.~Bogdan, Z.~Palmowski, and L.~Wang.
\newblock Yaglom limit for stable processes in cones.
\newblock {\em Electron. J. Probab.}, 23:1--19, 2018.

\bibitem{MR1419491}
L.~Chaumont.
\newblock Conditionings and path decompositions for {L}\'{e}vy processes.
\newblock {\em Stochastic Process. Appl.}, 64(1):39--54, 1996.

\bibitem{MR2164035}
L.~Chaumont and R.~A. Doney.
\newblock On {L}\'evy processes conditioned to stay positive.
\newblock {\em Electron. J. Probab.}, 10:no. 28, 948--961, 2005.

\bibitem{CKS14}
Z.-Q. Chen, P.~Kim, and R.~Song.
\newblock Dirichlet heat kernel estimates for rotationally symmetric {L}\'{e}vy
  processes.
\newblock {\em Proc. Lond. Math. Soc. (3)}, 109(1):90--120, 2014.

\bibitem{WCTGBT}
W.~Cygan, T.~Grzywny, and B.~Trojan.
\newblock Asymptotic behavior of densities of unimodal convolution semigroups.
\newblock {\em Trans. Amer. Math. Soc.}, 369(8):5623--5644, 2017.

\bibitem{MR3648297}
I.~Czarna and Z.~Palmowski.
\newblock Parisian quasi-stationary distributions for asymmetric {L}\'{e}vy
  processes.
\newblock {\em Statist. Probab. Lett.}, 127:75--84, 2017.

\bibitem{Dynkin65}
E.~B. Dynkin.
\newblock {\em Markov processes. {V}ols. {I}, {II}}, volume 122 of {\em
  Translated with the authorization and assistance of the author by J. Fabius,
  V. Greenberg, A. Maitra, G. Majone. Die Grundlehren der Mathematischen
  Wissenschaften, B\"ande 121}.
\newblock Academic Press Inc., Publishers, New York; Springer-Verlag,
  Berlin-G\"ottingen-Heidelberg, 1965.

\bibitem{MR1334159}
P.~A. Ferrari, H.~Kesten, S.~Martinez, and P.~Picco.
\newblock Existence of quasi-stationary distributions. {A} renewal dynamical
  approach.
\newblock {\em Ann. Probab.}, 23(2):501--521, 1995.

\bibitem{MR2318407}
P.~A. Ferrari and N.~Mari{\'c}.
\newblock Quasi stationary distributions and {F}leming-{V}iot processes in
  countable spaces.
\newblock {\em Electron. J. Probab.}, 12:no. 24, 684--702, 2007.

\bibitem{MR346932}
D.~C. Flaspohler and P.~T. Holmes.
\newblock Additional quasi-stationary distributions for semi-{M}arkov
  processes.
\newblock {\em J. Appl. Probability}, 9:671--676, 1972.

\bibitem{GS80}
I.~I. Gihman and A.~V. Skorohod.
\newblock {\em The theory of stochastic processes. {I}}, volume 210 of {\em
  Grundlehren der Mathematischen Wissenschaften [Fundamental Principles of
  Mathematical Sciences]}.
\newblock Springer-Verlag, Berlin-New York, english edition, 1980.
\newblock Translated from the Russian by Samuel Kotz.

\bibitem{Graves}
L.~M. Graves.
\newblock {\em The Theory of Functions of Real Variables (1st edition)}.
\newblock McGraw-Hill Booc Company, inc., New York, 1946.

\bibitem{TG14}
T.~Grzywny.
\newblock On {H}arnack inequality and {H}\"older regularity for isotropic
  unimodal {L}\'evy processes.
\newblock {\em Potential Anal.}, 41(1):1--29, 2014.

\bibitem{TGMK}
T.~Grzywny and M.~Kwa\'snicki.
\newblock Potential kernels, probabilities of hitting a ball, harmonic
  functions and the boundary {H}arnack inequality for unimodal {L}\'evy
  processes.
\newblock {\em Stochastic Process. Appl.}, 128(1):1--38, 2018.

\bibitem{TGLLMS21}
T.~Grzywny, L.~Le{\.{z}}aj, and M.~Mi\'{s}ta.
\newblock Hitting probabilities for l\'{e}vy processes on the real line.
\newblock {\em ALEA, Lat. Am. J. Probab. Math. Stat.}, 18:727--760, 2021.

\bibitem{MR3007664}
T.~Grzywny and M.~Ryznar.
\newblock Potential theory of one-dimensional geometric stable processes.
\newblock {\em Colloq. Math.}, 129(1):7--40, 2012.

\bibitem{TGMRBT}
T.~{Grzywny}, M.~{Ryznar}, and B.~{Trojan}.
\newblock {Asymptotic behaviour and estimates of slowly varying convolution
  semigroups}.
\newblock {\em Int. Res. Math. Notices}, 2019(23):7193--7258, 2019.
\newblock doi:10.1093/imrn/rnx324.

\bibitem{TGKS19}
T.~Grzywny and K.~Szczypkowski.
\newblock L\'{e}vy processes: concentration function and heat kernel bounds.
\newblock {\em Bernoulli}, 26(4):3191--3223, 2020.

\bibitem{TGKS17}
T.~{Grzywny} and K.~{Szczypkowski}.
\newblock {Estimates of heat kernels of non-symmetric L\'{e}vy processes}.
\newblock {\em Forum Math.}, 33(5):1207--1236, 2021.

\bibitem{HassRivero12}
B.~Haas and V.~Rivero.
\newblock Quasi-stationary distributions and {Y}aglom limits of self-similar
  {M}arkov processes.
\newblock {\em Stochastic Process. Appl.}, 122(12):4054--4095, 2012.

\bibitem{HHKW}
S.~C. Harris, E.~Horton, A.~E. Kyprianou, and M.~Wang.
\newblock Yaglom limit for critical neuron transport.
\newblock Preprint, 2021. ArXiv:2103.02237v2.

\bibitem{MR368168}
D.~L. Iglehart.
\newblock Random walks with negative drift conditioned to stay positive.
\newblock {\em J. Appl. Probability}, 11:742--751, 1974.

\bibitem{JackaRoberts95}
S.~D. Jacka and G.~O. Roberts.
\newblock Weak convergence of conditioned processes on a countable state space.
\newblock {\em J. Appl. Probab.}, 32(4):902--916, 1995.

\bibitem{JS87}
J.~Jacod and A.~N. Shiryaev.
\newblock {\em Limit theorems for stochastic processes}, volume 288 of {\em
  Grundlehren der Mathematischen Wissenschaften [Fundamental Principles of
  Mathematical Sciences]}.
\newblock Springer-Verlag, Berlin, 1987.

\bibitem{MR3010850}
V.~Knopova and R.~L. Schilling.
\newblock A note on the existence of transition probability densities of
  {L}\'{e}vy processes.
\newblock {\em Forum Math.}, 25(1):125--149, 2013.

\bibitem{MR3413864}
T.~Kulczycki and M.~Ryznar.
\newblock Gradient estimates of harmonic functions and transition densities for
  {L}\'{e}vy processes.
\newblock {\em Trans. Amer. Math. Soc.}, 368(1):281--318, 2016.

\bibitem{MK17}
M.~Kwa\'{s}nicki.
\newblock Ten equivalent definitions of the fractional {L}aplace operator.
\newblock {\em Fract. Calc. Appl. Anal.}, 20(1):7--51, 2017.

\bibitem{KJ17}
M.~Kwa\'snicki and T.~Juszczyszyn.
\newblock Martin kernels for {M}arkov processes with jumps.
\newblock {\em Potential Anal.}, 47(3):313--335, 2017.

\bibitem{KyprianouPalmowski06}
A.~E. Kyprianou and Z.~Palmowski.
\newblock Quasi-stationary distributions for {L}\'{e}vy processes.
\newblock {\em Bernoulli}, 12(4):571--581, 2006.

\bibitem{AEK71}
E.~K. Kyprianou.
\newblock On the quasi-stationary distribution of the virtual waiting time in
  queues with {P}oisson arrivals.
\newblock {\em J. Appl. Probability}, 8:494--507, 1971.

\bibitem{MR2299923}
A.~Lambert.
\newblock Quasi-stationary distributions and the continuous-state branching
  process conditioned to be never extinct.
\newblock {\em Electron. J. Probab.}, 12:no. 14, 420--446, 2007.

\bibitem{MR2959448}
M.~Mandjes, Z.~Palmowski, and T.~Rolski.
\newblock Quasi-stationary workload in a {L}\'evy-driven storage system.
\newblock {\em Stoch. Models}, 28(3):413--432, 2012.

\bibitem{SMJS94}
S.~Mart\'{\i}nez and J.~San~Mart\'{\i}n.
\newblock Quasi-stationary distributions for a {B}rownian motion with drift and
  associated limit laws.
\newblock {\em J. Appl. Probab.}, 31(4):911--920, 1994.

\bibitem{MR4171931}
Z.~Palmowski and M.~Vlasiou.
\newblock Speed of convergence to the quasi-stationary distribution for
  {L}\'{e}vy input fluid queues.
\newblock {\em Queueing Syst.}, 96(1-2):153--167, 2020.

\bibitem{Pollet08}
P.~Pollett.
\newblock Quasi-stationary distributions: A bibliography.
\newblock Available at
  \url{https://people.smp.uq.edu.au/PhilipPollett/papers/qsds/qsds.pdf}.

\bibitem{Pruitt81}
W.~E. Pruitt.
\newblock The growth of random walks and {L}\'evy processes.
\newblock {\em Ann. Probab.}, 9(6):948--956, 1981.

\bibitem{MR3841403}
Y.-X. Ren, R.~Song, and Z.~Sun.
\newblock A 2-spine decomposition of the critical {G}alton-{W}atson tree and a
  probabilistic proof of {Y}aglom's theorem.
\newblock {\em Electron. Commun. Probab.}, 23:Paper No. 42, 12, 2018.

\bibitem{MR4102269}
Y.-X. Ren, R.~Song, and Z.~Sun.
\newblock Limit theorems for a class of critical superprocesses with stable
  branching.
\newblock {\em Stochastic Process. Appl.}, 130(7):4358--4391, 2020.

\bibitem{Sato99}
K.-i. Sato.
\newblock {\em L\'evy processes and infinitely divisible distributions},
  volume~68 of {\em Cambridge Studies in Advanced Mathematics}.
\newblock Cambridge University Press, Cambridge, 1999.
\newblock Translated from the 1990 Japanese original, Revised by the author.

\bibitem{MR3644418}
R.~L. Schilling.
\newblock {\em Measures, integrals and martingales}.
\newblock Cambridge University Press, Cambridge, second edition, 2017.

\bibitem{MR207047}
E.~Seneta and D.~Vere-Jones.
\newblock On quasi-stationary distributions in discrete-time {M}arkov chains
  with a denumerable infinity of states.
\newblock {\em J. Appl. Probability}, 3:403--434, 1966.

\bibitem{Sztonyk00}
P.~Sztonyk.
\newblock On harmonic measure for {L}\'evy processes.
\newblock {\em Probab. Math. Statist.}, 20(2, Acta Univ. Wratislav. No.
  2256):383--390, 2000.

\bibitem{Tweedie74}
R.~L. Tweedie.
\newblock Quasi-stationary distributions for {M}arkov chains on a general state
  space.
\newblock {\em J. Appl. Probability}, 11:726--741, 1974.

\bibitem{MR1133722}
E.~A. van Doorn.
\newblock Quasi-stationary distributions and convergence to quasi-stationarity
  of birth-death processes.
\newblock {\em Adv. in Appl. Probab.}, 23(4):683--700, 1991.

\bibitem{MR2240700}
H.~\v{S}iki\'{c}, R.~Song, and Z.~Vondra\v{c}ek.
\newblock Potential theory of geometric stable processes.
\newblock {\em Probab. Theory Related Fields}, 135(4):547--575, 2006.

\bibitem{Yaglom47}
A.~M. Yaglom.
\newblock Certain limit theorems of the theory of branching random processes.
\newblock {\em Doklady Akad. Nauk SSSR (N.S.)}, 56:795--798, 1947.

\bibitem{MR3247530}
J.~Zhang, S.~Li, and R.~Song.
\newblock Quasi-stationarity and quasi-ergodicity of general {M}arkov
  processes.
\newblock {\em Sci. China Math.}, 57(10):2013--2024, 2014.

\end{thebibliography}

\end{document}